\DeclareSymbolFont{cyrletters}{OT2}{wncyr}{m}{n}
\DeclareMathSymbol{\Sha}{\mathalpha}{cyrletters}{"58}
\let\Re\undefined
\let\Im\undefined
\DeclareMathOperator{\Re}{Re}
\DeclareMathOperator{\Im}{Im}
\DeclareMathOperator{\Tr}{Tr}
\DeclareMathOperator{\supp}{supp}
\DeclareMathOperator{\GL}{GL}
\begin{document}
	
	\theoremstyle{plain}
\newtheorem{thm}{Theorem} \newtheorem{cor}[thm]{Corollary}
\newtheorem{thmy}{Theorem}
\renewcommand{\thethmy}{\Alph{thmy}}
\newenvironment{thmx}{\stepcounter{thm}\begin{thmy}}{\end{thmy}}
	\newtheorem{lemma}[thm]{Lemma}  \newtheorem{prop}[thm]{Proposition}
	\newtheorem{conj}[thm]{Conjecture}  \newtheorem{fact}[thm]{Fact}
	\newtheorem{claim}[thm]{Claim}
	\theoremstyle{definition}
	\newtheorem{defn}[thm]{Definition}
	\newtheorem{example}[thm]{Example}
	\newtheorem{exercise}[thm]{Exercise}
	\theoremstyle{remark}
	\newtheorem*{remark}{Remark}

	\newcommand{\BA}{{\mathbb {A}}} \newcommand{\BB}{{\mathbb {B}}}
	\newcommand{\BC}{{\mathbb {C}}} \newcommand{\BD}{{\mathbb {D}}}
	\newcommand{\BE}{{\mathbb {E}}} \newcommand{\BF}{{\mathbb {F}}}
	\newcommand{\BG}{{\mathbb {G}}} \newcommand{\BH}{{\mathbb {H}}}
	\newcommand{\BI}{{\mathbb {I}}} \newcommand{\BJ}{{\mathbb {J}}}
	\newcommand{\BK}{{\mathbb {K}}} \newcommand{\BL}{{\mathbb {L}}}
	\newcommand{\BM}{{\mathbb {M}}} \newcommand{\BN}{{\mathbb {N}}}
	\newcommand{\BO}{{\mathbb {O}}} \newcommand{\BP}{{\mathbb {P}}}
	\newcommand{\BQ}{{\mathbb {Q}}} \newcommand{\BR}{{\mathbb {R}}}
	\newcommand{\BS}{{\mathbb {S}}} \newcommand{\BT}{{\mathbb {T}}}
	\newcommand{\BU}{{\mathbb {U}}} \newcommand{\BV}{{\mathbb {V}}}
	\newcommand{\BW}{{\mathbb {W}}} \newcommand{\BX}{{\mathbb {X}}}
	\newcommand{\BY}{{\mathbb {Y}}} \newcommand{\BZ}{{\mathbb {Z}}}
	
	\newcommand{\CA}{{\mathcal {A}}} \newcommand{\CB}{{\mathcal {B}}}
	\newcommand{\CC}{{\mathcal {C}}} \renewcommand{\CD}{{\mathcal {D}}}
	\newcommand{\CE}{{\mathcal {E}}} \newcommand{\CF}{{\mathcal {F}}}
	\newcommand{\CG}{{\mathcal {G}}} \newcommand{\CH}{{\mathcal {H}}}
	\newcommand{\CI}{{\mathcal {I}}} \newcommand{\CJ}{{\mathcal {J}}}
	\newcommand{\CK}{{\mathcal {K}}} \newcommand{\CL}{{\mathcal {L}}}
	\newcommand{\CM}{{\mathcal {M}}} \newcommand{\CN}{{\mathcal {N}}}
	\newcommand{\CO}{{\mathcal {O}}} \newcommand{\CP}{{\mathcal {P}}}
	\newcommand{\CQ}{{\mathcal {Q}}} \newcommand{\CR}{{\mathcal {R}}}
	\newcommand{\CS}{{\mathcal {S}}} \newcommand{\CT}{{\mathcal {T}}}
	\newcommand{\CU}{{\mathcal {U}}} \newcommand{\CV}{{\mathcal {V}}}
	\newcommand{\CW}{{\mathcal {W}}} \newcommand{\CX}{{\mathcal {X}}}
	\newcommand{\CY}{{\mathcal {Y}}} \newcommand{\CZ}{{\mathcal {Z}}}
	
	\newcommand{\RA}{{\mathrm {A}}} \newcommand{\RB}{{\mathrm {B}}}
	\newcommand{\RC}{{\mathrm {C}}} \newcommand{\RD}{{\mathrm {D}}}
	\newcommand{\RE}{{\mathrm {E}}} \newcommand{\RF}{{\mathrm {F}}}
	\newcommand{\RG}{{\mathrm {G}}} \newcommand{\RH}{{\mathrm {H}}}
	\newcommand{\RI}{{\mathrm {I}}} \newcommand{\RJ}{{\mathrm {J}}}
	\newcommand{\RK}{{\mathrm {K}}} \newcommand{\RL}{{\mathrm {L}}}
	\newcommand{\RM}{{\mathrm {M}}} \newcommand{\RN}{{\mathrm {N}}}
	\newcommand{\RO}{{\mathrm {O}}} \newcommand{\RP}{{\mathrm {P}}}
	\newcommand{\RQ}{{\mathrm {Q}}} \newcommand{\RR}{{\mathrm {R}}}
	\newcommand{\RS}{{\mathrm {S}}} \newcommand{\RT}{{\mathrm {T}}}
	\newcommand{\RU}{{\mathrm {U}}} \newcommand{\RV}{{\mathrm {V}}}
	\newcommand{\RW}{{\mathrm {W}}} \newcommand{\RX}{{\mathrm {X}}}
	\newcommand{\RY}{{\mathrm {Y}}} \newcommand{\RZ}{{\mathrm {Z}}}
	
	\newcommand{\fa}{{\mathfrak{a}}} \newcommand{\fb}{{\mathfrak{b}}}
	\newcommand{\fc}{{\mathfrak{c}}} \newcommand{\fd}{{\mathfrak{d}}}
	\newcommand{\fe}{{\mathfrak{e}}} \newcommand{\ff}{{\mathfrak{f}}}
	\newcommand{\fg}{{\mathfrak{g}}} \newcommand{\fh}{{\mathfrak{h}}}
	\newcommand{\fii}{{\mathfrak{i}}} \newcommand{\fj}{{\mathfrak{j}}}
	\newcommand{\fk}{{\mathfrak{k}}} \newcommand{\fl}{{\mathfrak{l}}}
	\newcommand{\fm}{{\mathfrak{m}}} \newcommand{\fn}{{\mathfrak{n}}}
	\newcommand{\fo}{{\mathfrak{o}}} \newcommand{\fp}{{\mathfrak{p}}}
	\newcommand{\fq}{{\mathfrak{q}}} \newcommand{\fr}{{\mathfrak{r}}}
	\newcommand{\fs}{{\mathfrak{s}}} \newcommand{\ft}{{\mathfrak{t}}}
	\newcommand{\fu}{{\mathfrak{u}}} \newcommand{\fv}{{\mathfrak{v}}}
	\newcommand{\fw}{{\mathfrak{w}}} \newcommand{\fx}{{\mathfrak{x}}}
	\newcommand{\fy}{{\mathfrak{y}}} \newcommand{\fz}{{\mathfrak{z}}}
	\newcommand{\fA}{{\mathfrak{A}}} \newcommand{\fB}{{\mathfrak{B}}}
	\newcommand{\fC}{{\mathfrak{C}}} \newcommand{\fD}{{\mathfrak{D}}}
	\newcommand{\fE}{{\mathfrak{E}}} \newcommand{\fF}{{\mathfrak{F}}}
	\newcommand{\fG}{{\mathfrak{G}}} \newcommand{\fH}{{\mathfrak{H}}}
	\newcommand{\fI}{{\mathfrak{I}}} \newcommand{\fJ}{{\mathfrak{J}}}
	\newcommand{\fK}{{\mathfrak{K}}} \newcommand{\fL}{{\mathfrak{L}}}
	\newcommand{\fM}{{\mathfrak{M}}} \newcommand{\fN}{{\mathfrak{N}}}
	\newcommand{\fO}{{\mathfrak{O}}} \newcommand{\fP}{{\mathfrak{P}}}
	\newcommand{\fQ}{{\mathfrak{Q}}} \newcommand{\fR}{{\mathfrak{R}}}
	\newcommand{\fS}{{\mathfrak{S}}} \newcommand{\fT}{{\mathfrak{T}}}
	\newcommand{\fU}{{\mathfrak{U}}} \newcommand{\fV}{{\mathfrak{V}}}
	\newcommand{\fW}{{\mathfrak{W}}} \newcommand{\fX}{{\mathfrak{X}}}
	\newcommand{\fY}{{\mathfrak{Y}}} \newcommand{\fZ}{{\mathfrak{Z}}}
	\newcommand{\Int}{\operatorname{Int}}
	\newcommand{\Res}{\operatorname{Res}}
	\newcommand{\tr}{\operatorname{tr}}
	\newcommand{\Eis}{\operatorname{Eis}}
	\newcommand{\End}{\operatorname{End}}
	\newcommand{\K}{\operatorname{K}}
	\newcommand{\sgn}{\operatorname{sgn}}
	\newcommand{\Ker}{\operatorname{Ker}}
	\newcommand{\Ad}{\operatorname{Ad}}
	\newcommand{\Weyl}{\operatorname{Weyl}}
	\newcommand{\ram}{\operatorname{ram}}
	\newcommand{\Supp}{\operatorname{Supp}}
	\newcommand{\Cond}{\operatorname{Cond}}
	\newcommand{\diag}{\operatorname{diag}}
	\newcommand{\Ind}{\operatorname{Ind}}
	\newcommand{\E}{\operatorname{E}}
	\newcommand{\coker}{\operatorname{coker}}
	\newcommand{\Out}{\operatorname{Out}}
	\newcommand{\Geo}{\operatorname{Geo}}
	\newcommand{\Reg}{\operatorname{Reg}}
	\newcommand{\mix}{\operatorname{Mix}}
	\newcommand{\Sin}{\operatorname{Sing}}
	\newcommand{\V}{\operatorname{V}}
	\newcommand{\Span}{\operatorname{Span}}
	\title[Holomorphy of Adjoint L-functions for $\GL(n):$ $n\leq 4$]{Holomorphy of Adjoint L-functions for GL$(n)$: $n\leq 4$}%
	\author{Liyang Yang}

	\address{253-37 Caltech, Pasadena\\
		CA 91125, USA}
	\email{lyyang@caltech.edu}

	\begin{abstract}
We show entireness of complete adjoint L-functions associated to \textbf{any} unitary cuspidal representations of $\GL(3)$ or $\GL(4)$ over an arbitrary global field. Twisted cases are also investigated.
	\end{abstract}
	
	\date{\today}%
	\maketitle
	\tableofcontents
	\section{Introduction}
\subsection{A Folk Conjecture}
It is conjectured that for any $L$-series $L(s)$ in Selberg class, normalized to have functonal equation relating $s$ to $1-s,$ if $L(s)$ has a pole of order $r$ at $s=1,$ then $L(s)=\zeta(s)^r\cdot L_1(s),$ with $L_1(s)$ being holomorphic. This folk conjecture is wide open. For $L(s)$ attached to motivic $L$-functions, this is implied by Tate; while for $L(s)$ automorphic, Langlands Program implies it. In fact, every $L(s)$ of Selberg type is conjectured to be (isobaric) automorphic on $GL(n).$ In this paper, we consider one of the most fundamental cases, i.e., when $L(s)$ is a Rankin-Selberg convolution with a simple pole at $s=1.$
\medskip 
	
Let $F$ be a global field, and $\pi$ be any unitary cuspidal representation of $\GL(n,\mathbb{A}_F).$ Let $\tilde{\pi}$ be the contragredient of $\pi.$ Then one has the complete Rankin-Selberg $L$-function $\Lambda(s,\pi\times\tilde{\pi}),$ which has quite similar analytic properties as the complete Dedekind zeta function $\Lambda_{F}(s)$ associated to $F:$ they have simple poles at $s=0, 1;$ and are both holomorphic elsewhere. Hence the ratio
\begin{equation}\label{.}
\Lambda(s,\pi;\Ad):=\frac{\Lambda(s,\pi\times\tilde{\pi})}{\Lambda_{F}(s)}
\end{equation}
is meromorphic and is regular at $s=1.$ Conventionally $\Lambda(s,\pi;\Ad)$ is called complete adjoint $L$-function for $\pi$. One basic conjecture is 
\begin{conj}\label{P}
Let notation be as before. Then the complete adjoint $L$-function $\Lambda(s,\pi,\Ad)$ admits an analytic continuation to the whole complex plane.
\end{conj}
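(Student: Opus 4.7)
The plan rests on two pillars: the Jacquet--Shalika theorem that $\Lambda(s,\pi\times\tilde\pi)$ has only simple poles at $s=0,1$, and (isobaric) automorphy of the adjoint lift $\Ad\pi$, known for $n\le 4$ via Gelbart--Jacquet, Kim, and Kim--Shahidi. Suppose one has produced an isobaric automorphic representation $\Pi_{\Ad}=\boxplus_{i=1}^{r}\sigma_i$ on $\GL(n^2-1,\BA_F)$ whose complete $L$-function coincides with $\Lambda(s,\pi;\Ad)$, so that $\pi\times\tilde\pi=\mathbf{1}\boxplus\Pi_{\Ad}$. Then $\Lambda(s,\pi;\Ad)=\prod_{i=1}^{r}\Lambda(s,\sigma_i)$, and by Godement--Jacquet each factor is entire unless $\sigma_i$ is the trivial character on $\GL(1)$, in which case it equals $\Lambda_{F}(s)$. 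The conjecture thus reduces to showing that no $\sigma_i$ is trivial. By Langlands' theory of poles of $L$-functions of isobaric representations, the multiplicity of $\mathbf{1}$ in $\pi\times\tilde\pi$ equals the order of pole of $L(s,\pi\times\tilde\pi)$ at $s=1$, which is one by Jacquet--Shalika; hence $\mathbf{1}$ appears in $\Pi_{\Ad}$ with multiplicity zero and $\Lambda(s,\pi;\Ad)$ is entire.

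The substantive input is therefore isobaric automorphy of $\Ad\pi$ on $\GL(n^2-1)$. For $n=2$ this is the classical Gelbart--Jacquet adjoint square lift to $\GL(3)$. For $n=3$ I would invoke Kim--Shahidi's automorphy of $\mathrm{Sym}^2$ on $\GL(3)$: the Pieri decomposition $\pi\otimes\mathrm{Sym}^2\pi=\mathrm{Sym}^3\pi\oplus S^{(2,1)}\pi$, in which $S^{(2,1)}$ is, up to a central twist by $\omega_\pi^{-1}$, the $8$-dimensional adjoint of $\GL(3,\BC)$, lets one realize $\Ad\pi$ on $\GL(8,\BA_F)$; isobaric automorphy is then verified by matching Satake parameters at unramified places. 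For $n=4$ I would use Kim's exterior square lift $\wedge^2:\GL(4)\to\GL(6)$ together with the exceptional isomorphism $\SL(4)\simeq\mathrm{Spin}(6)$: the adjoint of $\SL(4)$ is the exterior square of the standard representation of $\mathrm{SO}(6)$, so $\Ad\pi\simeq\wedge^2(\wedge^2\pi)$ up to a central twist and a finite correction, producing an isobaric automorphic representation on $\GL(15,\BA_F)$.

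The hard part will be the $n=4$ case: one must promote the formal identity of Euler products to a genuine isobaric equality on $\GL(15)$, and in particular rule out a spurious Dedekind-zeta constituent in $\wedge^2(\wedge^2\pi)$. The multiplicity-one dichotomy established in the first paragraph disposes of the latter point once isobaric automorphy is in hand, but the former requires careful bookkeeping of local parameters at ramified and archimedean places, and may necessitate combining several Langlands--Shahidi lifts. The twisted cases $\Lambda(s,\pi\times\tilde\pi\otimes\chi)/\Lambda(s,\chi)$ follow identically with $\Ad\pi\otimes\chi$ replacing $\Ad\pi$, and the argument extends to function fields provided the Kim--Shahidi lifts used are available in positive characteristic, since the analytic inputs from Jacquet--Shalika and Godement--Jacquet hold in arbitrary characteristic.
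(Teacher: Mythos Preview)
Your overall framework is sound in principle: if the adjoint lift $\Ad\pi$ were known to be isobaric automorphic on $\GL(n^2-1)$, then Godement--Jacquet together with the Jacquet--Shalika simple-pole theorem would indeed force $\Lambda(s,\pi;\Ad)$ to be entire. The gap is that the functorial lifts you invoke for $n=3$ and $n=4$ are \emph{not} established.

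For $n=3$: there is no ``Kim--Shahidi automorphy of $\mathrm{Sym}^2$ on $\GL(3)$'' in the literature. Kim--Shahidi handled $\mathrm{Sym}^3$ of $\GL(2)$, and Kim later $\mathrm{Sym}^4$ of $\GL(2)$ and $\wedge^2$ of $\GL(4)$, but the symmetric-square transfer $\GL(3)\to\GL(6)$ remains open for general cuspidal $\pi$. Even granting it, your Pieri argument would further require automorphy of $\mathrm{Sym}^3\pi$ on $\GL(10)$ (or of the full $\GL(3)\times\GL(6)$ tensor-product lift to $\GL(18)$) in order to isolate the $S^{(2,1)}$ constituent as an isobaric automorphic representation; neither is known. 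The paper in fact states explicitly that Conjecture~\ref{P} ``was not even known for general cuspidal representation of $\GL(3)$'' prior to this work, which confirms that no functoriality shortcut was available.

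For $n=4$: Kim's theorem gives $\wedge^2:\GL(4)\to\GL(6)$, but the second step $\wedge^2:\GL(6)\to\GL(15)$ is not known to be automorphic, so $\wedge^2(\wedge^2\pi)$ cannot be realized as an isobaric automorphic representation by current technology. Your representation-theoretic identification via $\SL(4)\simeq\mathrm{Spin}(6)$ is correct, but it does not help without the missing transfer.

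The paper proceeds by an entirely different route, a generalization of the Jacquet--Zagier trace formula: one integrates the cuspidal kernel $\K_0(x,x)$ against a mirabolic Eisenstein series and decomposes the resulting distribution $I_0(s,\tau)$ into regular geometric, singular geometric, and spectral (``generic character'') pieces. Each piece is shown, via orbital-integral computations, Poisson summation, and zero-free-region estimates for Rankin--Selberg $L$-functions, to admit meromorphic continuation with controlled poles in $\Re(s)>0$; a global root-number computation (Proposition~\ref{76lem}) then excludes the last possible simple pole at $s=1/2$. No functoriality beyond the $\GL(2)$ case of Jacquet--Zagier is used.
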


Note that the adjoint $L$-function defined by \eqref{.} is actually equal to the Langlands $L$-function associated to the adjoint action of the dual group $^{L}\GL(n;\mathbb{C})$ on the complex Lie algebra $\mathfrak{sl}(n,\mathbb{C})$ of $SL(n).$ Then according to Langlands Program, Conjecture \ref{P} should hold. On the other hand, to study Langlands functoriality conjecture, it is important to obtain analytic continuation of \textit{complete} $L$-functions, rather than their finite parts.

\medskip
	
The first breakthrough was made for classical holomorphic cusp forms by Shimura \cite{Shi75} and independently by Zagier \cite{Zag77}; Shimura's approach was generalized by Gelbart-Jacquet \cite{GJ78} to the adelic setting, while Zagier's method was further developed by Jacquet-Zagier \cite{JZ87} in terms of representation language. Furthermore, Jacquet and Zagier  proposed an auxiliary speculation that Conjecture \ref{P} might be a consequence of Dedekind Conjecture, which asserts that the ratio $\Lambda_E(s)/\Lambda_F(s)$ is entire for any number field extension $E/F.$ Note that $\Lambda_E(s)/\Lambda_F(s)$ can be written as a product of Artin $L$-functions, then Dedekind Conjecture is a consequence of Artin's holomorphy conjecture. Flicker \cite{Fli92} gave an argument suggesting that Dedekind Conjecture implies certain cases of Conjecture \ref{P} for general $n,$ under some local conditions on $\pi.$ In \cite{Yang19}, we proved the converse direction: Conjecture \ref{P} implies Dedekind Conjecture.
\medskip

Another approach to attack Conjecture \ref{P} for small rank $n$ (e.g., $n=3$) is based on an integral representation, which was pioneered by Ginzburg \cite{Gin91}, and a method of ruling out poles which was pioneered by Ginzburg-Jiang \cite{GJ00}. Typically this method helps continue \textit{partial} adjoint $L$-function to some right half plane. See \cite{HZ18} on $\GL(3)$ case for instance. 

\subsection{Statement of the Main Results}
In general, Conjecture \ref{P} remains wide open. It was not even known for general cuspidal representation of $\GL(3).$ In this paper, we show Conjecture \ref{P} holds for $n\leq 4$. In fact, we can handle the twist case as well: let
\begin{align*}
\Lambda(s,\pi,\Ad\otimes \tau):=\frac{\Lambda(s,\pi\otimes\tau\times \tilde{\pi})}{\Lambda(s,\tau)}
\end{align*}
be the twist adjoint $L$-function, where $\tau$ be a character on $F^{\times}\backslash\mathbb{A}_F^{\times}.$ We have
	\begin{thmx}\label{main1}
		Let notation be as before. Let $n\leq 4.$ Then the complete $L$-function $\Lambda(s,\pi,\Ad\otimes \tau)$ is entire, unless $\tau\neq1$ and $\pi\otimes\tau\simeq\pi,$ in which case $\Lambda(s,\pi,\Ad\otimes \tau)$ is meromorphic with only simple poles at $s=0, 1.$ In particular, Conjecture \ref{P} holds for any cuspidal representation $\pi$ when $n\leq 4.$
	\end{thmx}

Then a computation using local Langlands correspondence leads to
\begin{cor}\label{2cor}
	Let notation be as before. Let $n\leq 4.$ Then the finite $L$-function $L(s,\pi,\Ad\otimes \tau)=L(s,\pi\otimes\tau\times \tilde{\pi})/L(s,\tau)$ is entire, unless $\tau\neq1$ and $\pi\otimes\tau\simeq\pi,$ in which case $L(s,\pi,\Ad\otimes \tau)$ is meromorphic with only possible simple poles at $s=0, 1.$ In particular, the adjoint L-function $L(s,\pi,\Ad)=L(s,\pi\times \tilde{\pi})/\zeta_F(s)$ is entire.
\end{cor}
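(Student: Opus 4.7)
The plan is to deduce the corollary from Theorem \ref{main1} by controlling the archimedean local factors via the local Langlands correspondence. By definition,
\[
L(s,\pi,\Ad\otimes\tau)=\frac{\Lambda(s,\pi,\Ad\otimes\tau)}{L_\infty(s,\pi,\Ad\otimes\tau)},
\]
where $L_\infty(s,\pi,\Ad\otimes\tau):=L_\infty(s,\pi\otimes\tau\times\tilde\pi)/L_\infty(s,\tau)$. Since Theorem \ref{main1} already governs the numerator, the task reduces to showing that this archimedean quotient is a meromorphic function on $\BC$ with no zeros.

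For each archimedean place $v$, I would let $\phi_v\colon W_v\to \GL_n(\BC)$ and $\chi_v\colon W_v\to \BC^\times$ denote the Langlands parameters attached to $\pi_v$ and $\tau_v$, respectively. Then $L_v(s,\pi_v\otimes\tau_v\times\tilde\pi_v)$ is the Artin-type L-factor associated to $\chi_v\otimes\phi_v\otimes\phi_v^{\vee}$. The key input is the canonical isomorphism
\[
\phi_v\otimes\phi_v^{\vee}\;\cong\;\mathbf{1}\;\oplus\;\Ad(\phi_v),
\]
where $\Ad(\phi_v):=\Ad\circ\phi_v$ comes from the adjoint action of $\GL_n(\BC)$ on $\mathfrak{sl}_n(\BC)$. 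The additivity of archimedean L-factors under direct sums of Weil--Deligne representations then yields
\[
L_v(s,\pi_v\otimes\tau_v\times\tilde\pi_v)=L_v(s,\tau_v)\cdot L\bigl(s,\chi_v\otimes\Ad(\phi_v)\bigr).
\]
Taking the product over $v\mid\infty$ identifies $L_\infty(s,\pi,\Ad\otimes\tau)$ with $\prod_{v\mid\infty} L(s,\chi_v\otimes\Ad(\phi_v))$, a finite product of shifted Gamma factors. Since the Gamma function is nowhere vanishing, this product has no zeros in $\BC$.

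Putting these together, Theorem \ref{main1} tells us that the numerator of $L(s,\pi,\Ad\otimes\tau)$ is entire (respectively meromorphic with at most simple poles at $s=0,1$ in the exceptional case $\tau\neq 1$, $\pi\otimes\tau\simeq\pi$), while the denominator has no zeros; hence the quotient is entire (respectively meromorphic with at most simple poles at $s=0,1$). Specializing to $\tau=\mathbf{1}$ recovers the entireness of $L(s,\pi,\Ad)=L(s,\pi\times\tilde\pi)/\zeta_F(s)$. The only real step is the local archimedean identification, but this is bookkeeping with explicit Langlands parameters at real and complex places; all of the deep global analytic content is already packaged in Theorem \ref{main1}, so no serious obstacle is anticipated.
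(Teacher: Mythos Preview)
Your proposal is correct and follows essentially the same approach as the paper: use local Langlands at archimedean places to see that $L_\infty(s,\pi\otimes\tau\times\tilde\pi)/L_\infty(s,\tau)$ is a finite product of shifted Gamma factors (hence nowhere vanishing), and then divide the conclusion of Theorem~\ref{main1} by this archimedean factor. The paper's proof is more terse and does not spell out the decomposition $\phi_v\otimes\phi_v^\vee\cong\mathbf{1}\oplus\Ad(\phi_v)$ explicitly, but the content is identical.
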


\begin{remark}
If $F$ is a function field, by using the cohomology of stacks of shtukas and the Arthur-Selberg trace formula, L. Lafforgue showed the Langlands correspondence of cuspidal representations $\pi$ of $\GL_n(\mathbb{A}_F)$ to Galois representations $\rho$ (see \cite{Laf02}). Then Theorem \ref{main1} follows from the identity $\Lambda(s,\pi,\Ad\otimes \tau)=\Lambda(s,\Ad\rho\otimes \tau)$ and analytic properties of  $\Lambda(s,\Ad\rho\otimes \tau),$ which is known well (see \cite{Wei74}). Hence we shall focus on the case that $F$ is a number field, where such a correspondence is not available yet. 
\end{remark}

\begin{remark}
If we admit Piatetski-Shapiro's strong conjecture on converse theorem (e.g. see Chap. 10 in \cite{Cog04}), Theorem \ref{main1} would imply that for any cuspidal representation $\pi$ of $\GL(n,\mathbb{A}_F),$ there exists an adjoint lifting $\Ad(\pi),$ which is a representation of $\GL(n^2-1,\mathbb{A}_F),$ in the sense of \cite{GJ78}. Hence, in principle, Theorem \ref{main1} will play a role in Langlands functoriality in this case.
\end{remark}

\subsection{Idea of Proofs and Plan of This Paper}
Our method is introduced in \cite{Yang19}, which is a generalization of \cite{JZ87} to higher rank case. Roughly speaking, we prove an identity of the form
\begin{align*}
\sum_{\pi}L(s,\pi,\Ad)\approx\sum_{[E:F]\leq n} \frac{\zeta_E(s)}{\zeta_F(s)}+\sum\text{L-S $L$-functions}+\sum\text{R-S $L$-functions},
\end{align*}
where L-S means Langlands-Shahidi and R-S refers to Rankin-Selberg for non-discrete representations, and sums above are typically infinite. We then show the convergence of sums and meromorphic continuation of the above mentioned $L$-functions and cancellation of their poles. In conjunction with certain spectral analysis and computing global root number, we eventually prove Theorem \ref{main1}.

\medskip

Let $G=GL(n),$ $n\leq 4.$ We consider a smooth function $\varphi:$ $G(\mathbb{A}_F) \rightarrow \mathbb{C}$ which is left and right $K$-finite, transforms by a unitary character $\omega$ of $Z_G\left(\mathbb{A}_F\right),$ and has compact support modulo $Z_G\left(\mathbb{A}_F\right).$ Then $\varphi$ defines an integral operator 
$$
R(\varphi)f(y)=\int_{Z_G(\mathbb{A}_F)\backslash G(\mathbb{A}_F)}\varphi(x)f(yx)dx,
$$ 
on the space $L^2\left(G(F)\backslash G(\mathbb{A}_F),\omega^{-1}\right)$ of functions on $G(F)\backslash G(\mathbb{A}_F)$ which transform under $Z_{G}(\mathbb{A}_F)$ by $w^{-1}$ and are square integrable on $G(F)Z_{G}(\mathbb{A}_F)\backslash G(\mathbb{A}_F).$ This operator can clearly be represented by the kernel function
$$
\K(x,y)=\sum_{\gamma\in Z_G(F)\backslash G(F)}\varphi(x^{-1}\gamma y).
$$

It is well known that $L^2\left(G(F)\backslash G(\mathbb{A}_F),\omega^{-1}\right)$ decomposes into the direct sums of the space $L_0^2\left(G(F)\backslash G(\mathbb{A}_F),\omega^{-1}\right)$ of cusp forms and spaces $L_{\Eis}^2\left(G(F)\backslash G(\mathbb{A}_F),\omega^{-1}\right)$ and $L_{\Res}^2\left(G(F)\backslash G(\mathbb{A}_F),\omega^{-1}\right)$ defined using Eisenstein series and residues of Eisenstein series respectively. Then $\K$ splits up as: $\K=\K_0+\K_{\Eis}+\K_{\Res}.$ Selberg trace formula gives an expression for the trace of the operator $R(\varphi)$ restricted to the discrete spectrum, and this is given by 
$$
\int_{G(F)Z(\mathbb{A}_F)\backslash G(\mathbb{A}_F)}\K_0(x,x)dx.
$$

We denote by $\mathcal{S}(\mathbb{A}_F^n)$ the space of Schwartz-Bruhat functions on the vector space $\mathbb{A}_F^n$ and by $\mathcal{S}_0(\mathbb{A}_F^n)$ the subspace spanned by products $\Phi=\prod_v\Phi_v$ whose components at real and complex places have the form
\begin{align*}
\Phi_v(x_v)=e^{-\pi \sum_{j=1}^nx_{v,j}^2}\cdot Q(x_{v,1},x_{v,2},\cdots,x_{v,n}),\ x_v=(x_{v,1},x_{v,2},\cdots,x_{v,n})\in F_v^n,
\end{align*}
where $F_v\simeq \mathbb{R},$ and $Q(x_{v,1},x_{v,2},\cdots,x_{v,n})\in \mathbb{C}[x_{v,1},x_{v,2},\cdots,x_{v,n}];$ and
\begin{align*}
\Phi_v(x_v)=e^{-2\pi \sum_{j=1}^nx_{v,j}\bar{x}_{v,j}}\cdot Q(x_{v,1},\bar{x}_{v,1},x_{v,2},\bar{x}_{v,2},\cdots,x_{v,n},\bar{x}_{v,n}),
\end{align*}
where $F_v\simeq \mathbb{C}$ and $Q(x_{v,1},\bar{x}_{v,1},x_{v,2},\bar{x}_{v,2},\cdots,x_{v,n},\bar{x}_{v,n})$ is a polynomial in the ring $\mathbb{C}[x_{v,1},\bar{x}_{v,1},x_{v,2},\bar{x}_{v,2},\cdots,x_{v,n},\bar{x}_{v,n}].$ 

Denote by $\Xi_F$ the set of characters on $F^{\times}\backslash\mathbb{A}_F^{\times}$ which are trivial on $\mathbb{R}_+^{\times}.$ Let $\Phi\in\mathcal{S}_0(\mathbb{A}_F^n)$ and $\tau\in\Xi_F.$ Let $\eta=(0,\cdots,0,1)\in F^n.$ Set
$$
f(x,\Phi,\tau;s)=\tau(\det x)|\det x|^s\int_{\mathbb{A}_F^{\times}}\Phi(\eta tx)\tau(t)^n|t|^{ns}d^{\times}t,
$$
which is a Tate integral (up to holomorphic factors) for $\Lambda(ns,x.\Phi,\tau^{n}).$ It converges absolutely uniformly in compact subsets of $\Re(s)>1/n.$ Since the mirabolic subgroup $P_0$ is the stabilizer of $\eta.$ Let $P=P_0Z_G$ be the full $(n-1,1)$ parabolic subgroup of $G,$ then $f(x,s)\in \Ind_{P(\mathbb{A}_F)}^{G(\mathbb{A}_F)}(\delta_P^{s-1/2}\tau^{-n}),$ where $\delta_P$ is the modulus character for the parabolic $P.$ Then we can define the Eisenstein series 
\begin{equation}\label{eis}
E_P(x,\Phi,\tau;s)=\sum_{\gamma\in P(F)\backslash G(F)}f(x,\Phi,\tau;s),
\end{equation}
which converges absolutely for $\Re(s)>1.$ Also, we define the integral:
\begin{equation}\label{X}
I_0^{\varphi}(s,\tau)=\int_{G(F)Z(\mathbb{A}_F)\backslash G(\mathbb{A}_F)}\K_0(x,x)E_P(x,\Phi;s)dx.
\end{equation}
If there is no confusion in the context, we will alway write $I(s,\tau)$ (resp. $f(x,s)$) instead of $I^{\varphi}(s,\tau)$ (resp. $f(x,\Phi,\tau;s)$) for simplicity.
\medskip 

In \cite{Yang19} (see Theorem A), we proved the expansion of $I_0(s,\tau)=I_0^{\varphi}(s,\tau):$
\begin{equation}\label{a}
I_0(s,\tau)=I_{\Geo,\Reg}(s,\tau)+I_{\infty,\Reg}(s,\tau)+I_{\Sin}(s,\tau)+ \sum_{\chi}\int_{(i\mathbb{R})^{n-1}}I_{\chi}(s,\tau,\lambda)d\lambda;
\end{equation}
and investigated analytic behaviors of $I_{\Geo,\Reg}(s,\tau),$ $I_{\infty,\Reg}(s,\tau)$ and $I_{\chi}(s,\tau,\lambda).$ Nevertheless, we still need to study the delicate geometric term $I_{\Sin}(s,\tau)$ and prove the sum over $\chi$ in the spectral side admits a meromorphic continuation to some domain containing $\Re(s)\geq 1/2.$ This is the goal of this paper. As a consequence, we will deduce Theorem \ref{main1}.

\medskip 

By Proposition in Section 3.3 of \cite{JZ87}, Theorem \ref{main1} will follow if $I_0(s,\tau)\cdot \Lambda(s,\tau)^{-1},$ $\Re(s)>1,$ admits a holomorphic continuation outside $s=1$. On the other hand, by Theorem D and Theorem E in \cite{Yang19}, we see $\Lambda(s,\tau)^{-1}\cdot I_{\Geo,\Reg}(s,\tau)$ and $\Lambda(s,\tau)^{-1}\cdot I_{\infty,\Reg}(s,\tau)$ admits a meromorphic continuation to the half plane $\Re(s)>1/3,$ holomorphic when $s\notin\{ 1, 1/2\},$ and has a possible simple pole at $s=1/2$ if $\tau^2=1,$ namely, $\tau$ is either trivial or has order 2. Therefore, according to decomposition \eqref{a}, it suffices to show $\Lambda(s,\tau)^{-1}\cdot I_{\Sin}(s,\tau)$ and $\Lambda(s,\tau)^{-1}\cdot\sum_{\chi}\int_{(i\mathbb{R})^{n-1}}I_{\chi}(s,\tau,\lambda)d\lambda$ admit meromorphic continuation to the whole $s$-plane, and the poles of all these mentioned functions cancel. 
\medskip

In Section \ref{geo}, we study $I_{\Sin}(s,\tau),$ proving Theorem \ref{Y} for $G=\GL(3)$ and $\GL(4)$ separately. In fact, if we further decompose the distributions by Bruhat decomposition, it is easy to see that many cells give no contribution. However, there are some cells such that the corresponding distributions diverge. Such problematic cells will be gathered together and the distribution $I_{\infty}^{\mix}(s,\tau)$ from (finite) linear combination of these cells will be shown vanishing via Poisson summation and Fourier expansion of certain orbital integrals (see Proposition \ref{mix}). Moreover, we obtain analytic behaviors of surviving (convergent) parts, they either contribute products of degree 1 $L$-functions, or may be reduced to Jacquet-Zagier's work \cite{JZ87} on $\GL(2)$ (e.g., see Proposition \ref{12}, Proposition \ref{000} and Proposition \ref{0004}).

\medskip 

In Section \ref{7.3}, we study $I^{(1)}_{\infty}(s,\tau)=\sum_{\chi}\int_{(i\mathbb{R})^{n-1}}I_{\chi}(s,\tau,\lambda)d\lambda,$ obtaining meromorphic continuation of it. When $\tau=1,$ the residue of $I^{(1)}_{\infty}(s,\tau)$ at $s=1$ should give the weighted character distribution in Arthur-Selberg trace formula. We call $I^{(1)}_{\infty}(s,\tau)$ the generic character distribution for $G.$ In \cite{Yang19}, we obtained meromorphic continuation of $\int_{(i\mathbb{R})^{n-1}}I_{\chi}(s,\tau,\lambda)d\lambda,$ which is related to Rankin-Selberg convolution for \textit{non-cuspidal} representations. Thus, we can write $I^{(1)}_{\infty}(s,\tau)$ as an infinite sum of meromorphic functions, yet each individual may have poles. Then the next step is to analyze these possible poles and show that they do cancel with each other (see Theorem \ref{78}). However, by this approach we can only rule out all potential poles of $I_0(s,\tau) \cdot \Lambda(s,\tau)^{-1}$ except for a possible simple pole at $s=1/2$ when $\tau$ is quadratic. 

\medskip

In Section \ref{8.}, we will prove Theorem \ref{main1}. In fact, Theorem \ref{78} in Section \ref{7.3} will eventually imply the that $\Lambda(s,\pi,\Ad\otimes\tau)$ admits a meromorphic continuation with at most a simple pole at $s=1/2.$ To remedy it, we prove the root number of $\Lambda(s,\pi,\Ad\otimes\tau)$ is always 1 in this case (see Proposition \ref{76lem}). This would exclude the possibility of existence of a simple pole at $s=1/2.$ Now Theorem \ref{main1} follows.
\medskip

\textbf{Acknowledgements}
I am very grateful to my advisor Dinakar Ramakrishnan for instructive discussions and helpful comments. I would like to thank Ashay Burungale, Li Cai, Herv\'e Jacquet, Dihua Jiang, Simon Marshall, Kimball Martin, Yiannis Sakellaridis, Chen Wan and Xinwen Zhu for their precise comments and useful suggestions. Part of this paper was revised during my visit to \'{E}cole polytechnique f\'ed\'erale de Lausanne in Switzerland and I would like to thank their hospitality.

\section{Contributions from Geometric Side}\label{geo}
\subsection{Basic Notation and Singular Orbital Distributions}\label{2.1}
Fix an integer $n\geq2.$ The maximal unipotent subgroup of $G(\mathbb{A}_F),$ denoted by $N(\mathbb{A}_F),$ is defined to be the set of all $n\times n$ upper triangular matrices in $G(\mathbb{A}_F)$ with ones on the diagonal and arbitrary entries above the diagonal. Let $\psi_{F/\mathbb{Q}}(\cdot)=e^{2\pi i\Tr_{F/\mathbb{Q}}(\cdot)}$ be the standard additive character, then we can define a character $\theta:$ $N(\mathbb{A}_F)\rightarrow \mathbb{C}^{\times}$ by
\begin{align*}
\theta(u)=\prod_{i=1}^{n-1}\psi_{F/\mathbb{Q}}\left(u_{i,i+1}\right),\quad \forall\ u=(u_{i,j})_{n\times n}\in N(\mathbb{A}_F).
\end{align*}  

Let $R_k$ be the standard parabolic subgroup of $G$ of type $(k,n-k)$ consisting of matrices whose $\GL(n-k)$ part is upper triangular unipotent.  Let $V_k$ be the unipotent subgroup of the standard parabolic subgroup of type $(k-1,1,n-k).$ Denote by $V_k'=\diag(I_k, N_{n-k}).$ For an algebraic group $H$ over $F,$ we will use the notation $[H]$ to refer $H(F)\backslash H(\mathbb{A}_F)$ for simplicity.
\medskip

Let $\widetilde{V}_k$ be the unipotent subgroup of the standard parabolic subgroup of $\GL(n)$ of type $(k,n-k).$ Let $\widetilde{V}_k'=\widetilde{V}_k\backslash \widetilde{V}_{k-1}.$ Let $N_k=\diag(I_{k-1},N_{2},I_{n-k-1}),$ the unipotent subgroup corresponding to the root $w_k,$ $1\leq k\leq n-1.$ For an algebraic group $H,$ sometimes we will write $H$ for its $F$-points $H(F)$ for simplicity. Also, for sets $A$ and $B,$ denote by $A^B$ the set $\{b^{-1}ab:\ a\in A,\ b\in B\}.$
\medskip 
\subsubsection{Fourier Expansion of Mirabolic Orbital Functions}
Let $h$ be a Schwartz function on $G(\mathbb{A}_F).$ Let $\mathcal{S}$ be a subset of $G(F).$ Let 
\begin{align*}
O_h(x,y)=\sum_{\gamma\in \mathcal{S}^{P_0(F)}}h(x^{-1}\gamma y),
\end{align*}
where $\mathcal{S}^{P_0(F)}$ is the set consisting of $p^{-1}\gamma p,$ for all $\gamma\in\mathcal{S}$ and $p\in P_0(F).$ We call $O_h(x,y)$ a mirabolic orbital function on $G(\mathbb{A}_F)\times G(\mathbb{A}_F)$ associated to $h$ and $\mathcal{S}.$ 
\begin{prop}\label{Fourier}
	Let notation be as before. Then
	\begin{equation}\label{fourier}
	O_h(x,y)=\sum_{k=1}^n\sum_{\delta_k\in R_{k-1}(F)\backslash R_{n-1}(F)}\int_{[V_k']}\int_{[V_k]}O_h(uu'\delta_kx,\delta_ky)\theta(u')du'du,
	\end{equation}
	if the right hand side converges absolutely and locally uniformly.
\end{prop}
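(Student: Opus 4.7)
The plan is to obtain \eqref{fourier} by iterated Fourier expansion along a flag of unipotent subgroups, in the spirit of the Shalika / Piatetski--Shapiro Whittaker expansion for automorphic forms on $\GL(n)$, but adapted to the bivariate kernel $O_h$. The essential structural input is the diagonal invariance
$$O_h(p_0 x, p_0 y) = O_h(x, y), \qquad p_0 \in P_0(F),$$
which follows at once from the $P_0(F)$-conjugation stability of the index set $\mathcal{S}^{P_0(F)}$. This invariance makes it meaningful to Fourier-expand partial integrals of $O_h$ along subquotients of the $V_k$'s, provided one allows the compensating shift to act on the $y$-variable.

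Concretely, I would prove \eqref{fourier} by reverse induction on $K$ from $n$ down to $0$, establishing intermediate identities of the shape
\begin{equation*}
O_h(x, y) = \sum_{k = K+1}^{n} \sum_{\delta_k \in R_{k-1}(F)\backslash R_{n-1}(F)} \int_{[V_k']}\int_{[V_k]} O_h(uu'\delta_k x, \delta_k y) \theta(u') \, du' \, du + R_K(x, y),
\end{equation*}
where $R_K$ is a partial constant-term integral of $O_h$ associated to the $K$-th step. The initial case $K = n$ is tautological, with $R_n = O_h$ and the empty sum. The inductive step from $K$ to $K-1$ is performed by Fourier-expanding $R_K$ along a one-dimensional abelian subquotient corresponding to the new simple-root direction acquired at step $K$: the trivial Fourier component refines into the next residual $R_{K-1}$, while the nontrivial components, parameterized by $F^\times$, coalesce under the action of the appropriate standard Levi of $\GL(n)$ into a sum over $R_{K-1}(F)\backslash R_K(F)$, with the additional $\theta$-twist along the $V_K'$-direction emerging naturally as one normalizes the $F^\times$-character to a generic character of $V_K'$. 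Telescoping the coset sums across successive steps along the chain $R_0 \subset R_1 \subset \cdots \subset R_{n-1}$ assembles into the required parametrization by $R_{k-1}(F)\backslash R_{n-1}(F)$. At $K = 0$, the residual vanishes because $R_0 = N$ is the full maximal unipotent and all its characters have been accounted for, which yields \eqref{fourier}.

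The main obstacle will be the careful bookkeeping of the coset representatives $\delta_k$: at each inductive step one must verify that the parameterization of nontrivial characters on the one-dimensional subquotient by $F^\times$ collapses, under the combined effect of the Levi action and the diagonal $P_0$-invariance, to precisely the coset space $R_{K-1}(F)\backslash R_K(F)$, and that the cumulative composition of all the $\delta$-shifts produces $R_{k-1}(F)\backslash R_{n-1}(F)$ rather than some refinement or coarsening. A secondary technical point is that the individual residual terms $R_K$ need not converge absolutely in isolation; the global absolute and locally uniform convergence of the right-hand side, explicitly assumed in the statement, is what legitimates every interchange of sum and integral used along the induction.
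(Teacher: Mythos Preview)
Your proposal is correct and follows the same approach the paper intends: the paper omits the proof entirely, pointing to Proposition~17 of \cite{Yang19}, and that proposition is precisely the iterated Shalika/Piatetski--Shapiro Fourier expansion you outline, driven by the diagonal $P_0(F)$-invariance of $O_h$. One small bookkeeping slip: $R_0$ is not the maximal unipotent $N$ but rather the Borel-type subgroup in the chain $R_0\subset R_1\subset\cdots\subset R_{n-1}=P_0$, and the terminal residual in your induction does not vanish---it is the $k=n$ term (the constant term along $[V_n]=[N_P]$, with $V_n'$ trivial so no $\theta$-twist)---but this does not affect the validity of the scheme.
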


Proposition \ref{Fourier} will play a role in the crucial Proposition \ref{mix} (see Sec. \ref{2.3}). Since the proof of \eqref{fourier} is essentially the same as Prop. 17 in \cite{Yang19}, we omit the proof here.

\subsubsection{The Singular Orbital Distribution }
Denote by $\mathfrak{S}=\bigcup _{k=1}^{n-1}\left(Z_G(F)\backslash Q_k(F)\right)^{P_0(F)}.$ Following the approach in \cite{Yang19}, we will treat $I(s)$ via the decomposition
\begin{equation}\label{1}
\K_0(x,y)=\K_{\Geo,\Reg}(x,y)+\K_{\Geo,\Sin}(x,y)-\K_{\infty}(x,y),
\end{equation}
where $\mathcal{C}$ runs through all nontrivial conjugacy classes in $G(F)/Z_G(F)$ and 
\begin{align*}
& \K_{\Geo, \Reg}(x,y)=\sum_{\mathcal{C}} \sum_{\substack{\gamma\in \mathcal{C}-\mathfrak{G}}}\varphi(x^{-1}\gamma y),\quad \K_{\Geo,\Sin}(x,y)=\sum_{\gamma\in \mathfrak{G}}\varphi(x^{-1}\gamma y), 
\end{align*}
and for simplicity we denote by $\K_{\infty}(x,y)=\K_{\Eis}(x,y)+\K_{\Res}(x,y).$ Then substitute Fourier expansion of $\K_{\infty}(x,y)$ (e.g. Prop. 17 in loc. cit.) into \eqref{1} to obtain 
\begin{equation}\label{2}
\K_0(x,y)=\K_{\Geo,\Reg}(x,y)+\K_{\Geo,\Sin}(x,y)-\sum_{k=1}^n\K_{\infty}^{(k)}(x,y),
\end{equation}
where the sum over $k$ indicates the Fourier expansion of $\K_{\infty}(x,y):$
\begin{align*}
\K_{\infty}^{(k)}(x,y)=\sum_{\delta_k\in R_{k-1}(F)\backslash R_{n-1}(F)}\int_{[V_k']}\int_{[V_k]}\K_{\infty}(uu'\delta_kx,\delta_ky)du\theta(u')du'
\end{align*}

 We can further decompose $\K_{\infty}^{(n)}(x,y)=\K_{\infty,\Reg}(x,y)+\K_{\infty,\Sin}(x,y),$ where
\begin{align*}
& \K_{\infty, \Reg}(x,y)=\int_{[N_P]}\K_{\Geo,\Reg}(ux,y)du,\ \K_{\infty,\Sin}(x,y)=\int_{[N_P]}\K_{\Geo,\Sin}(ux,y)du.
\end{align*}
\medskip

Let $X_G=Z_G(\mathbb{A}_F)P_0(F)\backslash G(\mathbb{A}_F).$ By the above expansion \eqref{2}, we then obtain 
\begin{align*}
I_0(s,\tau)=I_{\Geo,\Reg}(s,\tau)-I_{\infty,\Reg}(s,\tau)+I_{\Sin}(s,\tau)-I_{\infty}^{(1)}(s,\tau),
\end{align*}
where $I_{\Geo,\Reg}(s,\tau),$ $I_{\infty,\Reg}(s,\tau)$ and $I_{\infty}^{(1)}(s,\tau)$ are defined by integrating the kernel functions $\K_{\Geo,\Reg}(x,x),$ $\K_{\infty,\Reg}(x,x)$ and $\K_{\infty}^{(1)}(x,x)$ against $f(x,s)$ over $X_G,$ respectively; and the distribution $I_{\Sin}(s,\tau)$ is defined by 
\begin{align*}
I_{\Sin}(s,\tau)=\int_{X_G}\bigg[\K_{\Geo,\Sin}(x,x)-\K_{\infty,\Sin}(x,x)-\sum_{k=2}^{n-1}\K_{\infty}^{(k)}(x,x)\bigg]\cdot f(x,s)dx.
\end{align*}

In fact, the integral with respect to each term in the bracket will diverge, while the linear combination $\K_{\Geo,\Sin}(x,x)-\K_{\infty,\Sin}(x,x)-\sum_{k=2}^{n-1}\K_{\infty}^{(k)}(x,x)$ will make the divergent parts cancel. Hence we will call $I_{\Sin}(s,\tau)$ singular orbital distribution for $G.$  

In loc. cit. we investigate analytic behaviors of $I_{\Geo,\Reg}(s,\tau),$ $I_{\infty,\Reg}(s,\tau)$ and (partially) $I_{\infty}^{(1)}(s,\tau),$ circumventing $I_{\Sin}(s,\tau)$ by a choice of test functions. In this section, we shall use general test functions to prove some basic properties of $I_{\Sin}(s,\tau)$, and conclude the following result:

\begin{thmx}\label{Y}
Let notation be as before. Let $n\leq 4.$ Then $I_{\Sin}(s,\tau)$ admits a meromorphic continuation to the whole $s$-plane. Moreover, the function $I_{\Sin}(s,\tau)/\Lambda(s,\tau)$ is holomorphic in the right half plane $\Re(s)>0$ if $s\notin\{1, 1/2, 1/3, \cdots, 1/n\},$ and $I_{\Sin}(s,\tau)\cdot \Lambda(s,\tau)^{-1}$ may have at most simple poles when $s\in \{1/2, 1/3, \cdots, 1/n\}.$
\end{thmx}
\begin{remark}
To deal with general $\GL(n),$ one of the initial steps is to classify the relevant orbital integrals of Fourier type for all $2\leq k\leq n.$ The classification of $k=1$ case, i.e., Kloosterman integrals, can also be found in \cite{BFG84} or \cite{Jac03}. For lower rank, e.g., $n\leq 4,$ we can do this by brute force.
\end{remark}

The proof of Theorem \ref{Y} follows readily by gathering results in Sec. \ref{2.2} and Sec. \ref{2.3} below.

 \subsection{Singular Expansion for $\GL(3)$}\label{2.2}
Let notation be as before. Recall that 
\begin{equation}\label{0023}
I_{\infty}^{\Sin}(s,\tau)=\int_{Z_G(\mathbb{A}_F)P_0(F)\backslash G(\mathbb{A}_F)}\bigg[\K_{\Sin}(x,x)-\K_{\infty}^{(2)}(x,x)\bigg]\cdot f(x,s)dx,
\end{equation}

To prove Theorem \ref{Y}, we need to investigate $ \K_{\Sin}(x,x)=\K_{\Geo,\Sin}(x,x)-\K_{\infty,\Sin}^{(3)}(x,x)$ and $\K_{\infty}^{(2)}(x,x).$ From the definition of $\K_{\Geo,\Sin}(x,x)$ and $\K_{\infty,\Sin}^{(3)}(x,x),$ we need a description of $\mathfrak{G}:$
\begin{lemma}\label{21}
	Let notation be as before. Then we have 
	\begin{equation}\label{001}
	\mathfrak{G}=P_0(F)\bigsqcup (B_0w_2N_2)^{B_0\backslash P_0}.
	\end{equation}
	Moreover, any $\gamma \in \mathfrak{G}- P_0(F)$ can be written uniquely as 
	\begin{equation}\label{004}
	\gamma=p^{-1}bw_{2}up,
	\end{equation}
	where $p\in B_0(F)\backslash P_0(F),$ $b\in B_0(F),$ and $u\in N_2(F).$
\end{lemma}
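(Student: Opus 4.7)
The plan is to decompose $\mathfrak{G} = \bigl(Z_G\backslash Q_1(F)\bigr)^{P_0(F)} \cup \bigl(Z_G\backslash Q_2(F)\bigr)^{P_0(F)}$ via the Bruhat decompositions of $Q_1$ and $Q_2$, and then tease apart the two disjoint strata. Throughout, I would view $B_0$ as the set of upper-triangular matrices with last diagonal entry $1$ (simultaneously a subgroup of $P_0$ and a system of coset representatives for $Z_G\backslash B_{\GL(3)}$), so that the mod-$Z_G$ bookkeeping remains transparent.

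For the $(2,1)$-parabolic $Q_2$: every element scales uniquely by some $cI \in Z_G$ so that its last diagonal entry becomes $1$, giving $Q_2 = Z_G\cdot P_0$ with trivial intersection, and hence $Z_G\backslash Q_2 \cong P_0$ as subsets of $Z_G\backslash G$. Since $P_0$ is self-normalizing, $(Z_G\backslash Q_2)^{P_0} = P_0(F)$, accounting for the first summand.

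For the $(1,2)$-parabolic $Q_1$: its standard Bruhat decomposition (Levi Weyl group $\{e,w_2\}$) reads $Q_1 = B_{\GL(3)} \sqcup B_{\GL(3)} w_2 N_2$, which modulo $Z_G$ becomes $B_0 \sqcup B_0 w_2 N_2$. The first summand lies in $P_0$ and is absorbed into the $Q_2$-stratum, leaving $(B_0 w_2 N_2)^{P_0}$ to analyze. By bi-invariance of Bruhat cells under $B_{\GL(3)}$, the subgroup $B_0$ preserves $B_0 w_2 N_2$ under conjugation, while a representative of the form $w_1 u \in B_0 w_1 N_1 = P_0\setminus B_0$ moves it into the big cell $Bw_0 B$: the length computation $\ell(w_1w_2w_1) = 3 = \ell(w_1)+\ell(w_2)+\ell(w_1)$, together with the standard product formula for Bruhat cells, yields $(Bw_1B)(Bw_2B)(Bw_1B) = Bw_0B$. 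Hence $(B_0 w_2 N_2)^{P_0}$ lies in $Bw_2B \cup Bw_0B$, while $P_0 \subset Q_2 = B_{\GL(3)} \sqcup B_{\GL(3)} w_1 B_{\GL(3)}$ lies in the complementary cells, establishing the disjoint-union decomposition of $\mathfrak{G}$.

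For the uniqueness of $\gamma = p^{-1} b w_2 u p$: once a system of coset representatives for $B_0\backslash P_0$ is fixed, the same length argument shows that the translates $p^{-1}(B_0 w_2 N_2) p$ are mutually disjoint (they lie in distinct Bruhat cells), so the coset $[p]$ is determined by $\gamma$; after that, $b w_2 u$ is determined, and uniqueness of the standard Bruhat factorization $B_0 \times N_2 \xrightarrow{\sim} B_0 w_2 N_2$ (a standard fact for a simple reflection) recovers $b$ and $u$. The main care-point throughout is maintaining consistency of the $Z_G$-quotient; the substantive ingredient is the Bruhat-cell length computation and its bi-invariance, which feeds both the stability under $B_0$-conjugation and the disjointness across distinct $B_0$-cosets.
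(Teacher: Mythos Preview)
Your approach via Bruhat decomposition is exactly what the paper intends (it omits the proof of Lemma~\ref{21} and refers to the analogous $\GL(4)$ argument in Lemma~\ref{22} and Claim~\ref{31}). Your treatment of the decomposition $\mathfrak{G}=P_0(F)\sqcup (B_0w_2N_2)^{B_0\backslash P_0}$ is correct: the $B_0$-stability of $B_0w_2N_2$ and the length computation $\ell(w_1w_2w_1)=3$ place the two pieces in complementary Bruhat cells.

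There is, however, a real gap in your uniqueness argument. You assert that the translates $p^{-1}(B_0w_2N_2)p$ for distinct coset representatives $p$ ``lie in distinct Bruhat cells''. This is false once both representatives are nontrivial: for $p_1=w_1n_1$ and $p_2=w_1n_2$ with $n_1\neq n_2\in N_1$, both conjugates land in the \emph{same} big cell $Bw_1w_2w_1B$. Your length argument only separates the trivial coset from the nontrivial ones. To finish, you must show transversality within the nontrivial stratum. The paper's method (in Claim~\ref{31}) is to suppose $p_1^{-1}\gamma_1 p_1=p_2^{-1}\gamma_2 p_2$ with $\gamma_i\in B_0w_2N_2$ and rewrite this as $q\gamma_1=\gamma_2 q$ with $q=p_2p_1^{-1}$. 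If $p_1\neq p_2$ lie in the same coset component $w_1N_1$, then $q=w_1(n_2n_1^{-1})w_1^{-1}\in N_1^-\setminus\{1\}\subset Bw_1B$. Now the length-additive product formula gives $q\gamma_1\in (Bw_1B)(Bw_2B)=Bw_1w_2B$ while $\gamma_2 q\in (Bw_2B)(Bw_1B)=Bw_2w_1B$; since $w_1w_2\neq w_2w_1$ in $S_3$, these cells are disjoint, a contradiction. This extra step is the substantive content of the ``transversality'' half of the lemma.
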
 
Since Lemma \ref{21} is a straightforward computation using Bruhat decomposition, we omit the proof. However, we will proved a detailed proof to Lemma \ref{22} (see Sec. \ref{2.3}), which is a higher rank version of Lemma \ref{21}.

\medskip 
Let $\mathcal{A}_1(F)=(B_0w_2N_2)^{B_0\backslash P_0},$ and $\mathcal{A}_2(F)=P_0.$ For $1\leq i\leq 2,$ we denote by 
\begin{align*}
&\K_{\Geo,\Sin,i}(x,y)=\sum_{\gamma\in \mathcal{A}_i(F)}\varphi(x^{-1}\gamma y);\\
&\K_{\infty,\Sin,i}^{(3)}(x,y)=\int_{N_P(F)\backslash N_P(\mathbb{A}_F)}\sum_{\gamma\in \mathcal{A}_i(F)}\varphi(x^{-1}u^{-1}\gamma y)du.
\end{align*}

Then, $\K_{\Sin}(x,y)=\K_{\Sin,1}(x,y)+\K_{\Sin,2}(x,y),$ where $\K_{\Sin,i}(x,y)=\K_{\Geo,\Sin,i}(x,y)-\K_{\infty,\Sin,i}^{(3)}(x,y),$ $1\leq i\leq 2.$ On the other hand, by Bruhat decomposition, $\K_{\infty}^{(2)}(x,x)=\sum_{i=1}^5\K_{\infty,i}^{(2)}(x,x),$ where
\begin{align*}
\K_{\infty,i}^{(2)}(x,x)=\sum_{\delta\in R_1(F)\backslash P_0(F)}\int_{[N_1]}\int_{[N_P]}\sum_{\gamma\in\mathcal{B}_i(F)}\varphi(x^{-1}\delta^{-1}u^{-1}v^{-1}\gamma x)\theta(u)dudv,
\end{align*}
with $\mathcal{B}_1(F)=B_0(F)w_2N_2(F),$ $\mathcal{B}_2(F)=P_0(F),$ $\mathcal{B}_3(F)=B_0(F)w_1w_2w_1N(F),$ $\mathcal{B}_4(F)=B_0(F)w_1w_2N_{12}(F),$ and $\mathcal{B}_5(F)=B_0(F)w_2w_1N_{21}(F).$

Denote by $\K_{\Sin,2}(x,x)=\K_{\Geo,\Sin,2}(x,y)-\K_{\infty,\Sin,2}^{(3)}(x,y)-\K_{\infty,2}^{(2)}(x,x).$ Then one can apply Proposition \ref{Fourier} to $\K_{\Geo,\Sin,2}(x,y),$ to deduce
\begin{equation}\label{0024}
\K_{\Sin,2}(x,x)=\sum_{\delta\in N(F)\backslash P_0(F)}\int_{N(F)\backslash N(\mathbb{A}_F)}\sum_{\gamma\in P_0(F)}\varphi(x^{-1}u^{-1}\gamma x)\theta(u)du.
\end{equation}

Let $I_{\Sin,2}(s)=\int_{Z_G(\mathbb{A}_F)P_0(F)\backslash G(\mathbb{A}_F)}\K_{\Sin,2}(x,x)\cdot f(x,s)dx.$ Then by \eqref{0024},
\begin{align*}
I_{\Sin,2}(s)=&\int_{Z_G(\mathbb{A}_F)N(F)\backslash G(\mathbb{A}_F)}\int_{N(F)\backslash N(\mathbb{A}_F)}\sum_{\gamma\in P_0(F)}\varphi(x^{-1}u^{-1}\gamma x)\theta(u)duf(x,s)dx.
\end{align*}

Using Bruhat decomposition to write $P_0(F)=B_0(F)\sqcup B_0(F)w_1N_1(F),$ then 
\begin{equation}\label{0025}
I_{\Sin,2}(s)=\int_{Z_G(\mathbb{A}_F)N(F)\backslash G(\mathbb{A}_F)}\int_{[N]}\sum_{\gamma\in B_0(F)}\varphi(x^{-1}u^{-1}\gamma x)\theta(u)duf(x,s)dx,
\end{equation}
since the contribution from $\gamma\in B_0(F)w_1N_1(F)$ vanishes. Now we can apply Iwasawa decomposition $G(\mathbb{A}_F)=N(\mathbb{A}_F)T(\mathbb{A}_F)K$ into \eqref{0025} to obtain
\begin{align*}
I_{\Sin,2}(s)=&\int_{\mathbb{A}_F^{\times}}\int_{\mathbb{A}_F^{\times}}\int_{K}\sum_{t_1,t_2\in F^{\times}}\int_{\mathbb{A}_F}\int_{\mathbb{A}_F}\int_{\mathbb{A}_F}\varphi\left(k^{-1}\begin{pmatrix}
t_1& &\\
&t_2&\\
&&1
\end{pmatrix}\begin{pmatrix}
1&c&f\\
&1&e\\
&&1
\end{pmatrix}k\right)\\
&\quad\theta(ac)\theta(be)\tau^2(a)\tau(b)|a|^{2s}|b|^s\cdot f(k,s)dcdfdedkd^{\times}ad^{\times}b.
\end{align*}

Then by Tate's thesis, we conclude that $I_{\Sin,2}(s)$ is an integral representation for  $\Lambda(s,\tau)\Lambda(2s,\tau^2)\Lambda(3s,\tau^3).$ Hence $I_{\Sin,2}(s)$ converges absolutely when $\Re(s)>1,$ and it has the analytic property 
\begin{equation}\label{0026}
I_{\Sin,2}(s)\sim \Lambda(s,\tau)\Lambda(2s,\tau^2)\Lambda(3s,\tau^3).
\end{equation}

As a consequence, $I_{\Sin,2}(s)$ admits a meromorphic continuation to $s$-plane, with possible poles (which are simple if exist) at $s\in\{1, 1/2, 1/3\}.$ 

\bigskip 

By a simple changing of variables, we see 
\begin{equation}\label{0027}
\int_{N(F)\backslash N(\mathbb{A}_F)}\K_{\infty,4}^{(2)}(nx,nx)dn=\int_{N(F)\backslash N(\mathbb{A}_F)}\K_{\infty,5}^{(2)}(nx,nx)dn=0.
\end{equation}

So we have to deal with the rest contribution from $\K_{\infty}^{(2)}(x,x),$ namely,
\begin{equation}\label{0028}
I_{\infty,i}^{(2)}(s)=\int_{Z_G(\mathbb{A}_F)P_0(F)\backslash G(\mathbb{A}_F)}\K_{\infty,i}^{(2)}(x,x)\cdot f(x,s)dx,
\end{equation}
where $i\in \{1,3\}.$ We compute $I_{\infty,3}^{(2)}(s)$ first:
\begin{align*}
I_{\infty,3}^{(2)}(s)=\int_{Z_G(\mathbb{A}_F)R_1(F)\backslash G(\mathbb{A}_F)}\int_{[N_1]}\int_{[N_P]}\sum_{\gamma\in \mathcal{B}_3(F)}\varphi(x^{-1}v^{-1}u^{-1}\gamma x)\theta(u)dudvf(x,s)dx.
\end{align*}

Let $\widetilde{w}=w_1w_2w_1.$ Again, apply Iwasawa decomposition to see 
\begin{align*}
I_{\infty,3}^{(2)}(s)=&\int_{(\mathbb{A}_F^{\times})^2}\int_{K}\int_{\mathbb{A}_F^6}\sum_{t}\varphi\left(k^{-1}\begin{pmatrix}
1&a&b\\
&1&c\\
&&1
\end{pmatrix}\widetilde{w}\begin{pmatrix}
tt_1^2t_2^2& &\\
&t_1t_2&\\
&&1
\end{pmatrix}\begin{pmatrix}
1&e&f\\
&1&g\\
&&1
\end{pmatrix}k\right)\\
&\quad \theta(t_1g)\theta(t_1c)|t_1t_2|^{s+2}|t_1|^s\tau(t_1)\tau(t_1t_2)w(t_1t_2)d^{\times}t_1d^{\times}t_2f(k,s)dk\\
=&\int_{(\mathbb{A}_F^{\times})^2}\int_{K}\int_{\mathbb{A}_F^6}\sum_{t\in F^{\times}}\varphi\left(k^{-1}\begin{pmatrix}
1&a&b\\
&1&c\\
&&1
\end{pmatrix}\widetilde{w}\begin{pmatrix}
tt_2^2& &\\
&t_2&\\
&&1
\end{pmatrix}\begin{pmatrix}
1&e&f\\
&1&g\\
&&1
\end{pmatrix}k\right)\\
&\quad \theta(t_1g)\theta(t_1c)|t_2|^{s+2}|t_1|^{s}\tau(t_1)\tau(t_2)w(t_2)d^{\times}t_1d^{\times}t_2f(k,s)dk.
\end{align*}
Then by Tate's thesis and intertwining operator theory, we conclude that $I_{\infty,3}^{(2)}(s)$ is an integral representation for  $\Lambda(s,\tau)\Lambda(s+1,\tau)\Lambda(3s,\tau^3)/\Lambda(s+2,\tau).$ Hence $I_{\infty,3}^{(2)}(s)$ converges absolutely when $\Re(s)>1,$ and it has the analytic property 
\begin{equation}\label{0029}
I_{\infty,3}^{(2)}(s)\sim \frac{\Lambda(s,\tau)\Lambda(s+1,\tau)\Lambda(3s,\tau^3)}{\Lambda(s+2,\tau)}.
\end{equation}

\bigskip

We claim the term $I_{\infty,1}^{(2)}(s)$ will be canceled by contribution from some part of $\K_{\Sin,1}(x,x).$ This will be presented in the following computation. Denote by 
\begin{align*}
\K_{\Sin,1}^{(2)}(x;y)=\sum_{\gamma\in B_0w_2N_2}\varphi(x^{-1}y^{-1}\gamma x)-\int_{[N_P]}\sum_{\gamma\in B_0w_2N_2}\varphi(x^{-1}u^{-1}y^{-1}\gamma x)du.
\end{align*}

Then $\K_{\Sin,1}^{(2)}(x;y)$ is well defined function with respect to $y$ on $B_0(F)\backslash B_0(\mathbb{A}_F)\subset P^2_0(F)\backslash G^2(\mathbb{A}_F),$ where $G^2=\diag(\GL(2),1),$ and $P^2$ is the (only) standard parabolic subgroup of $G^2;$ and $P_0^2$ is the mirabolic subgroup of $P^2.$ Hence, we can apply Fourier expansion to $\K_{\Sin,1}^{(2)}(x;y)$ and set $y=I_3$ to obtain:
\begin{equation}\label{0019}
\K_{\Sin,1}^{(2)}(x;I_3)= \K_{\Sin,1}^{(2,1)}(x,x)-\K_{\Sin,1}^{(2,2)}(x,x)+ \K_{\Sin,1}^{(1,1)}(x,x),
\end{equation}
where 
\begin{align*}
\K_{\Sin,1}^{(2,1)}(x,x)=&\int_{N_{1}(F)\backslash N_{1}(\mathbb{A}_F)}\sum_{\gamma\in B_0w_2N_2}\varphi(x^{-1}u^{-1}\gamma x)du;\\
\K_{\Sin,1}^{(1,1)}(x,x)=&\sum_{\delta\in N(F)\backslash R_1(F)}\int_{N_{1}(F)\backslash N_{1}(\mathbb{A}_F)}\sum_{\gamma\in B_0w_2N_2}\varphi(x^{-1}\delta^{-1}v^{-1}\gamma x)\theta(v)dv;\\
\K_{\Sin,1}^{(2,2)}(x,x)=&\int_{N_P(F)\backslash N_P(\mathbb{A}_F)}\int_{N_{1}(F)\backslash N_{1}(\mathbb{A}_F)}\sum_{\gamma\in B_0w_2N_2}\varphi(x^{-1}u^{-1}v^{-1}\gamma x)dvdu.
\end{align*}

To deal with $ \K_{\Sin,1}^{(2,1)}(x),$ we will apply Poisson summation: write $\gamma=b_0w_2n_2\in B_0(F)w_2N_2(F),$ where $b_0\in B_0(F)$ and $n_2\in N_2(F).$ Noting $N_2(F)\simeq F,$ we can apply Poisson summation to see $\K_{\Sin,1}^{(2,1)}(x,x)=\K_{\Sin,1,0}^{(2,1)}(x,x)+\K_{\Sin,1,\neq 0}^{(2,1)}(x,x),$ where the constant term $\K_{\Sin,1,0}^{(2,1)}(x,x)$ is equal to 
\begin{align*}
\int_{N_{1}(F)\backslash N_{1}(\mathbb{A}_F)}\int_{N_2(\mathbb{A}_F)}\sum_{b_0\in B_0(F)}\varphi(x^{-1}u^{-1}b_0w_2vx)dudv;
\end{align*}
and $\K_{\Sin,1,\neq 0}^{(2,1)}(x,x),$ the contribution from non-constant terms, is equal to 
\begin{align*}
\sum_{\beta\in F^{\times}}\int_{N_{1}(F)\backslash N_{1}(\mathbb{A}_F)}\int_{N_2(\mathbb{A}_F)}\sum_{b_0\in Z_G(F)\backslash B(F)}\varphi(x^{-1}u^{-1}b_0w_2vx)\theta(\beta v)dudv.
\end{align*}

By a change of variable, we see $\K_{\Sin,1,\neq 0}^{(2,1)}(x,x)$ can be rewritten as 
\begin{align*}
\sum_{\lambda\in R_1(F)\backslash B_0(F)}\int_{N_{1}(F)\backslash N_{1}(\mathbb{A}_F)}\int_{N_2(\mathbb{A}_F)}\sum_{b_0\in B_0(F)}\varphi(x^{-1}\lambda^{-1}u^{-1}b_0w_2v\lambda x)\theta(v)dudv.
\end{align*}

Hence the decomposition \eqref{0019} can be refined as 
\begin{equation}\label{0020}
\K_{\Sin,1}^{(2)}(x;I_3)= \K_{\Sin,1,0}^{(2,1)}(x,x)-\K_{\Sin,1}^{(2,2)}(x,x)+\K_{\Sin,1,\neq 0}^{(2,1)}(x,x)+ \K_{\Sin,1}^{(1,1)}(x,x),
\end{equation}

Integrating \eqref{0020} over $[N_P]=N_P(F)\backslash N_P(\mathbb{A}_F)$ to see 
\begin{equation}\label{0021}
\int\K_{\Sin,1}^{(2)}(nx;I_3)dn=\int \K_{\Sin,1,\neq 0}^{(2,1)}(nx,nx)dn+\int\K_{\Sin,1}^{(1,1)}(nx,nx)dn.
\end{equation}

Also, substituting the expression of $\K_{\Sin,1,\neq 0}^{(2,1)}(x,x)$ we then obtain:
\begin{equation}\label{0022}
\sum_{p\in B_0(F)\backslash P_0(F)}\int_{[N_P]} \K_{\Sin,1,\neq 0}^{(2,1)}(nx,nx)dn=\int_{[N_P]}\K_{\infty,1}^{(2)}(nx,nx)dn.
\end{equation}

Hence, by \eqref{0026}, \eqref{0029}, \eqref{0021}, \eqref{0022} and \eqref{0023}, we only need to consider the contribution from $\K_{\infty,1}^{(2)}(x,x),$ $\K_{\Sin,1,\neq 0}^{(2,1)}(x,x)$ and $\K_{\Sin,1}^{(1,1)}(x,x).$ In fact, a straightforward computation shows that the contribution from $\K_{\Sin,1,\neq 0}^{(2,1)}(x,x)$ cancels that from $\K_{\infty,1}^{(2)}(x,x).$ Therefore, we only need to compute the contribution from $\K_{\Sin,1}^{(1,1)}(x,x).$
\medskip 

To deal with $\K_{\Sin,1}^{(1,1)}(x,x),$ we still need to apply Poisson summation, which implies that  $\K_{\Sin,1}^{(1,1)}(x,x)=\K_{\Sin,1,0}^{(1,1)}(x,x)+\K_{\Sin,1,\neq 0}^{(1,1)}(x,x),$ where the constant term $\K_{\Sin,1,0}^{(1,1)}(x,x)$ is equal to 
\begin{align*}
\sum_{\delta\in N(F)\backslash R_1(F)}\int_{N_{1}(F)\backslash N_{1}(\mathbb{A}_F)}\int_{N_2(\mathbb{A}_F)}\sum_{b_0\in B_0(F)}\varphi(x^{-1}\delta^{-1}u^{-1}b_0w_2v\delta x)\theta(u)dudv;
\end{align*}
and $\K_{\Sin,1,\neq0}^{(1,1)}(x,x)$ the contribution from non-constant terms, is equal to 
\begin{align*}
\sum_{\delta\in N(F)\backslash R_1(F)}\sum_{\beta\in F^{\times}}\int_{[N_{1}]}\int_{N_2(\mathbb{A}_F)}\sum_{b_0\in Z_G(F)\backslash B(F)}\varphi(x^{-1}\delta^{-1}u^{-1}b_0w_2v\delta x)\theta(u)\theta(\beta v)dudv.
\end{align*}

By a change of variable, we see $\K_{\Sin,1,\neq 0}^{(1,1)}(x,x)$ can be rewritten as 
\begin{align*}
\sum_{\lambda\in N(F)\backslash B_0(F)}\int_{N_{1}(F)\backslash N_{1}(\mathbb{A}_F)}\int_{N_2(\mathbb{A}_F)}\sum_{b_0\in B_0(F)}\varphi(x^{-1}\lambda^{-1}u^{-1}b_0w_2v\lambda x)\theta(u)\theta(v)dudv.
\end{align*}

As before, we can form the distributions respectively: 
\begin{align*}
&I_{\Sin,1,0}^{(1,1)}(s)=\int_{Z_G(\mathbb{A}_F)B_0(F)\backslash G(\mathbb{A}_F))}\K_{\Sin,1,0}^{(1,1)}(s)f(x,s)dx;\\
&I_{\Sin,1,\neq 0}^{(1,1)}(s)=\int_{Z_G(\mathbb{A}_F)B_0(F)\backslash G(\mathbb{A}_F))}\K_{\Sin,1,\neq 0}^{(1,1)}(s)f(x,s)dx.
\end{align*}

Let $t_0=\diag(t_1,t_2,1)\in B_0(F).$ Let
\begin{align*}
h(t_0):=\int_{[N_2\backslash N]}\int_{[N_1]}\sum_{n\in N(F)}\varphi(x^{-1}v^{-1}u^{-1}nt_0w_2vy)\theta(u)dudv.
\end{align*}

Then $h(t_0)$ is well defined. Let $v_0=\begin{pmatrix}
1&a&b\\
&1&\\
&&1
\end{pmatrix},$ where $a=-t_1b.$ Note that 
\begin{align*}
h(t_0)=&\int_{[N_2\backslash N]}\int_{[N_1]}\sum_{n\in N(F)}\varphi(x^{-1}v^{-1}v_0^{-1}u^{-1}nt_0w_2v_0vy)\theta(u)dudv\\
=&\theta((t_1^{-1}-t_1t_2^{-1})a)\int_{[N_2\backslash N]}\int_{[N_1]}\sum_{n\in N(F)}\varphi(x^{-1}v^{-1}u^{-1}nt_0w_2vy)\theta(u)dudv,
\end{align*}
namely, $h(t_0)=\theta((t_1^{-1}-t_1t_2^{-1})a)h(t_0)$ for any $a\in \mathbb{A}_F.$ Hence $h(t_0)$ is nonvanishing unless $t_2=t_1^2.$ Therefore, we can replace $\K_{\Sin,1,0}^{(1,1)}(s)$ with
\begin{align*}
\sum_{\delta\in N(F)\backslash R_1(F)}\int_{N_{1}(F)\backslash N_{1}(\mathbb{A}_F)}\int_{N_2(\mathbb{A}_F)}\sum_{b_0\in B_0^*(F)}\varphi(x^{-1}\delta^{-1}u^{-1}b_0w_2v\delta x)\theta(u)dudv,
\end{align*}
where $B_0^*(F)$ consists of elements $\diag(t,t^2,1)\mod Z_G(F),$ $t\in F^{\times}.$ Then
\begin{align*}
I_{\Sin,1,0}^{(1,1)}(s)=&\int_{Z_G(\mathbb{A}_F)N(F)\backslash G(\mathbb{A}_F)}\sum_{b\in F}\sum_{c\in F}\int_{\mathbb{A}_F}\int_{\mathbb{A}_F}f(x,s)\cdot\\
&\quad \varphi\left(x^{-1}\begin{pmatrix}
1&a&\\
&1&c\\
&&1
\end{pmatrix}w_2\begin{pmatrix}
1&b&\\
&1&e\\
&&1
\end{pmatrix}x\right)\theta(a)dadedx
\end{align*}

Now we use Iawasawa decomposition to obtain 
\begin{align*}
I_{\Sin,1,0}^{(1,1)}(s)=&\int_{\mathbb{A}_F^{\times}}\int_{\mathbb{A}_F^{\times}}\int_{K}\int_{\mathbb{A}_F^4}\varphi\left(k^{-1}\begin{pmatrix}
1&a&\\
&1&c\\
&&1
\end{pmatrix}\begin{pmatrix}
t_1&&\\
&t_1^2&\\
&&1
\end{pmatrix}w_2\begin{pmatrix}
1&b&\\
&1&e\\
&&1
\end{pmatrix}k\right)\\
&\quad \theta(t_1t_2a)\theta(t_2b)|t_1|^{2s}|t_2|^s\omega(t_1)\tau(t_1)^2\tau(t_2)f(k,s)dadbdcdedkd^{\times}t_2d^{\times}t_1.
\end{align*}

Then by Tate's thesis, we conclude that $I_{\Sin,1,0}^{(1,1)}(s)$ can be written as 
\begin{align*}
I_{\Sin,1,0}^{(1,1)}(s)=&\Lambda(s,\tau)\Lambda(3s,\tau^3)\int_{\mathbb{A}_F^{\times}}Q_{\varphi}(t_1,s)|t_1|^{2s}|w(t_1)\tau(t_1)^2d^{\times}t_1,
\end{align*}
where $Q_{\varphi}(t_1,s)$ is entire with respect to $s$ and has compact support as function of $t_1.$ Hence $I_{\Sin,1,0}^{(1,1)}(s)$ converges absolutely when $\Re(s)>1,$ and it has the analytic property 
\begin{equation}\label{0030}
I_{\Sin,1,0}^{(1,1)}(s)\sim \Lambda(s,\tau)\Lambda(3s,\tau^3).
\end{equation}
\medskip 

Let $X=\mathbb{A}_F^{\times}\times \mathbb{A}_F^{\times}.$ Likewise, we have
\begin{align*}
I_{\Sin,1,\neq 0}^{(1,1)}&(s)=\int_{X}\sum_{t\in F^{\times}}\int_{K}\int_{\mathbb{A}_F^4}\varphi\left(k^{-1}\begin{pmatrix}
1&a&\\
&1&c\\
&&1
\end{pmatrix}\begin{pmatrix}
t_1&&\\
&t_1^2&\\
&&1
\end{pmatrix}w_2\begin{pmatrix}
1&b&\\
&1&e\\
&&1
\end{pmatrix}k\right)\\
& \theta(t_1t_2a)\theta(t_2b)\theta(tt_1c)\theta(tt_1e)|t_1|^{2s}|t_2|^s\omega(t_1)\tau(t_1)^2\tau(t_2)f(k,s)dadbdcdedkdx.
\end{align*}
Likewise, $t_1$ actually runs over a compact set, by Tate's thesis, we conclude that $I_{\Sin,1,\neq 0}^{(1,1)}(s)$ is an integral representation for  $\Lambda(s,\tau)\Lambda(3s,\tau^3).$ Hence $I_{\Sin,1,\neq 0}^{(1,1)}(s)$ converges absolutely when $\Re(s)>1,$ and it has the analytic property 
\begin{equation}\label{0031}
I_{\Sin,1,\neq 0}^{(1,1)}(s)\sim \Lambda(s,\tau)\Lambda(3s,\tau^3).
\end{equation}
\medskip

Now we put the above formulas together to see  
\begin{equation}\label{0032}
I_{\Sin}(s)=I_{\Sin,2}(s)+I_{\infty,3}^{(2)}(s)+I_{\Sin,1,0}^{(1,1)}(s)+I_{\Sin,1,\neq 0}^{(1,1)}(s).
\end{equation}
By \eqref{0026}, \eqref{0029}, \eqref{0030} and \eqref{0031}, we then conclude that $I_{\Sin}(s)$ converges absolutely when $\Re(s)>1;$ admits a meromorphic continuation to the whole $s$-plane; moreover, $I_{\Sin}(s)\cdot \Lambda(s,\tau)^{-1}$ admits a meromorphic continuation to $\Re(s)>1/3,$ with possible simple poles at $s\in\{1, 1/2\},$ proving Theorem \ref{Y}.

\subsection{Singular Expansion for $\GL(4)$}\label{2.3}

To study $ I_{\Sin}(s),$ we need to investigate $ \K_{\Sin}(x,y):=\K_{\Geo,\Sin}(x,y)-\K_{\infty,\Sin}^{(4)}(x,y)$ and each $\K_{\infty}^{(k)}(x,y),$ $2\leq k\leq 3.$ Hence, we first need a similar result as Lemma \ref{21} to describe the $P(F)$-conjugacy classes of 
\begin{align*}
(Q_1\cup Q_2)- P=Bw_3N\sqcup  Bw_1w_3N\sqcup Bw_2w_3N\sqcup Bw_3w_2N\sqcup Bw_2w_3w_2N
\end{align*} 
in terms of $B(F)\backslash P(F).$ Let $S$ be the standard parabolic subgroup of type $(2,1,1).$ Denote by $S_0(F)=Z_G(F)\backslash S(F).$  First, we consider the conjugation by $S(F)\backslash P(F).$

\begin{lemma}\label{22}
	Let notation be as before. Denote by $\mathfrak{G}_1=(Bw_3N\sqcup  Bw_1w_3N\sqcup Bw_2w_3N)^{B(F)\backslash S(F)}.$ Then 
	\begin{equation}\label{24}
	\mathfrak{G}-P(F)=\mathfrak{G}_1^{S(F)\backslash P(F)}.
	\end{equation}
	Moreover, $Bw_3N\sqcup  Bw_1w_3N\sqcup Bw_2w_3N$ forms a set of representatives of $B(F)\backslash P(F)$-conjugacy classes of $Q_1(F)\cup Q_2(F)-P(F).$
\end{lemma}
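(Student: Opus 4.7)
The plan is to prove Lemma \ref{22} by a direct Bruhat-theoretic computation, following the template of Lemma \ref{21} but with the added complication of a non-trivial quotient $S(F)\backslash P(F)$.

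First, I would use the standard Bruhat decomposition of $G = \GL(4)$ to enumerate the $B(F)$-double cosets making up $(Q_1(F) \cup Q_2(F)) - P(F)$. Since $P$ is the standard $(3,1)$-parabolic, its Weyl group is $W_M = \langle w_1, w_2 \rangle$, and cells lying outside $P(F)$ correspond exactly to Weyl elements whose reduced expression contains the simple reflection $w_3$. Intersecting with $Q_1(F) \cup Q_2(F)$ and keeping track of lengths identifies the five Bruhat cells $Bw_3N$, $Bw_1w_3N$, $Bw_2w_3N$, $Bw_3w_2N$, $Bw_2w_3w_2N$ listed before the statement.

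Second, I would factor $P(F)$-conjugation as a $B(F)$-conjugation followed by conjugation by coset representatives of $S(F)\backslash P(F)$. The quotient $S(F)\backslash P(F)$ is the flag variety for the $(2,1)$-parabolic inside the $\GL(3)$-block of $P$, so a set of Weyl coset representatives is $\{e, w_2, w_1w_2\}$. The core computation is to write a general element $bw_3w_2 u$ (resp. $bw_2w_3w_2 u$) with $b\in B(F)$, $u\in N(F)$, conjugate it by such a representative $\sigma$, and use the Weyl relations $w_2w_3w_2 = w_3w_2w_3$ together with $BwB\cdot w'B \subseteq B w w' B \cup \bigcup_{w''<ww'} Bw''B$ to rewrite $\sigma^{-1} bw_3w_2 u\sigma$ modulo $B(F)$ into one of the three cells $Bw_3N$, $Bw_1w_3N$, $Bw_2w_3N$. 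This gives $\mathfrak{G} - P(F) \subseteq \mathfrak{G}_1^{S(F)\backslash P(F)}$; the reverse inclusion is immediate from the definition of $\mathfrak{G}$, establishing \eqref{24}. The inner $B(F)\backslash S(F)$-conjugation inside the definition of $\mathfrak{G}_1$ is what promotes $B(F)$-conjugacy classes within a single cell to the full $S(F)$-orbit, completing the bookkeeping for $B(F)\backslash P(F)$-conjugacy.

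Third, for the ``moreover'' clause I would verify that the three cells $Bw_3N$, $Bw_1w_3N$, $Bw_2w_3N$ are pairwise $B(F)\backslash P(F)$-inequivalent. If $p^{-1}(bw_iw_3 n)p \in Bw_jw_3 N$ for some $p \in P(F)$ and distinct $i,j \in \{e,w_1,w_2\}$, then projecting to the Weyl group via Bruhat decomposition forces the two Weyl cosets $w_i w_3 W_P$ and $w_j w_3 W_P$ to coincide, which is ruled out by a direct check in $W/W_P$. The main obstacle is the matrix calculation in the middle paragraph: when conjugating by $w_1w_2$ (and absorbing the resulting unipotent tails), the off-diagonal entries of $u$ above the anti-diagonal can ``cross'' the Weyl element and produce stray factors that must be pushed back into $B(F)$ or $N(F)$. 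Tracking these contributions, especially for the longest cell $Bw_2w_3w_2 N$, is delicate and is the only non-routine step.
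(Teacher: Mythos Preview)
Your approach to \eqref{24} is essentially the paper's: enumerate the five cells $Bw_3N,\,Bw_1w_3N,\,Bw_2w_3N,\,Bw_3w_2N,\,Bw_2w_3w_2N$, and conjugate the last two into the first three using representatives of $S(F)\backslash P(F)$. One small slip: the minimal-length representatives for $W_S\backslash W_P=\langle w_1\rangle\backslash\langle w_1,w_2\rangle$ are $\{e,w_2,w_2w_1\}$, not $\{e,w_2,w_1w_2\}$ (your $w_1w_2$ lies in the same coset $W_S w_2$ as $w_2$). The paper carries this out with an explicit matrix calculation on $Bw_2w_3w_2N$ (splitting into the cases $a+b=0$ and $a+b\neq 0$), which is exactly the ``delicate'' step you flag.

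The ``moreover'' clause, however, has a genuine gap. Your argument is that if $p^{-1}(bw_iw_3n)p\in Bw_jw_3N$ with $p\in P(F)$ then the right cosets $w_iw_3W_P$ and $w_jw_3W_P$ must agree. This is false on two counts. First, conjugation by $p\in P(F)$ does not preserve right $W_P$-cosets of the Bruhat label; the correct invariant is the $W_P$-\emph{double} coset, and $w_3,\,w_1w_3,\,w_2w_3$ all lie in the single double coset $W_P w_3 W_P$ (indeed $w_1w_3=w_3w_1$ already lies in $w_3W_P$). So no coset argument separates the three cells. Second, ``set of representatives'' also requires that distinct elements \emph{within} a single cell, say two elements of $Bw_3N_3$, are not $B(F)\backslash P(F)$-conjugate; you do not address this transversality at all.

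The paper proves both points as Claim \ref{31}: using the cell-multiplication formula \eqref{20} it computes for each of the three base cells the full list of Bruhat cells its $S(F)\backslash P(F)$-orbit can meet, checks these lists are disjoint (with one apparent overlap in $Bw_1w_2w_3w_2w_1N$ ruled out by a direct contradiction), and then verifies transversality by a case-by-case argument using uniqueness of Bruhat normal form. You will need to replace your coset sketch with this kind of explicit cell-tracking.
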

\begin{proof}
	By Bruhat decomposition, we see 
	\begin{equation}\label{0039}
	S(F)\backslash P(F)=\{1\}\sqcup w_2N_2\sqcup w_2w_1N_{21}.
	\end{equation}
	Since $(Bw_3w_2N)^{w_2}\subseteq Bw_2w_3w_2N\sqcup Bw_2w_3N,$ the $B(F)\backslash P(F)$ conjugacy class of $Bw_3w_2N$ is contained in that of $Bw_2w_3w_2N\sqcup Bw_2w_3N.$ Let $\gamma\in Bw_2w_3w_2N.$ Write $\gamma$ into its Bruhat normal form:
	\begin{align*}
	\gamma=\begin{pmatrix}
	1&* &*&*\\
	& 1&a&*\\
	&&1&*\\
	&&&1
	\end{pmatrix}\begin{pmatrix}
	t_1& &&\\
	& t_2&&\\
	&&t_3&\\
	&&&t_4
	\end{pmatrix}w_2w_3w_2\begin{pmatrix}
	1 &&&\\
	& 1&b&*\\
	&&1&*\\
	&&&1
	\end{pmatrix}.
	\end{align*}
	If $a+b\neq 0,$ then $\gamma\in (Bw_2w_3N)^{w_2N_2};$ if $a+b= 0,$ then $\gamma\in (Bw_3N)^{w_2N_2}.$ Hence $(Bw_2w_3w_2N)^{B(F)\backslash P(F)}\subseteq (Bw_3N)^{B(F)\backslash P(F)}\cup (Bw_2w_3N)^{B(F)\backslash P(F)}.$ Thus, $\mathfrak{G}-P(F)=(Bw_3N\sqcup  Bw_1w_3N\sqcup Bw_2w_3N)^{B(F)\backslash P(F)}.$ Hence, \eqref{24} follows. Moreover, we have:
	\begin{claim}\label{31}
		The set $Bw_3N_3\sqcup  Bw_1w_3N_{13}\sqcup Bw_2w_3N_{23}$ forms representatives of $(Bw_3N\sqcup  Bw_1w_3N\sqcup Bw_2w_3N)^{S(F)\backslash P(F)}.$
	\end{claim}
	This proves the rest of Lemma \ref{22}.
\end{proof}

\begin{proof}[Proof of Claim \ref{31}] Let $w, s_{\alpha}$ be Weyl elements and the length $l(s_{\alpha})=1$. Let $C(w)$ and $C(s_{\alpha})$ be the Bruhat cells, respectively. Recall we have proved in \cite{Yang19} that 
	\begin{equation}\label{20}
	C(w)^{s_{\alpha}}=
	\begin{cases}
	C(s_{\alpha}ws_{\alpha}),\ \text{if $l(s_{\alpha}ws_{\alpha})=l(w)+2$;}\\
	C(s_{\alpha}w)\sqcup C(s_{\alpha}ws_{\alpha}),\ \text{if $l(s_{\alpha}w)<l(w),$ $l(s_{\alpha}ws_{\alpha})>l(s_{\alpha}w);$}\\
	C(ws_{\alpha})\sqcup C(s_{\alpha}ws_{\alpha}),\ \text{if $l(ws_{\alpha})<l(w),$ $l(s_{\alpha}ws_{\alpha})>l(ws_{\alpha});$}\\
	C(w)\sqcup C(s_{\alpha}w)\sqcup C(ws_{\alpha})\sqcup C(s_{\alpha}ws_{\alpha}),\ \text{otherwise,}
	\end{cases}
	\end{equation}
	where $C(w)^{s_{\alpha}}:=C(s_{\alpha})C(w)C(s_{\alpha}).$ Then by \eqref{20} and \eqref{0039} we see that 
	\begin{align*}
	&(Bw_3N)^{S(F)\backslash P(F)}\subseteq Bw_3N\sqcup Bw_2w_3w_2N\sqcup Bw_1w_2w_3w_2w_1N;\\
	&(Bw_1w_3N)^{S(F)\backslash P(F)}\subseteq Bw_1w_3N\sqcup Bw_2w_1w_3w_2N\sqcup Bw_1w_2w_1w_3w_2w_1N;\\
	&(Bw_2w_3N)^{S(F)\backslash P(F)}\subseteq Bw_2w_3N\sqcup Bw_3w_2N\sqcup Bw_1w_3w_2w_1N\sqcup Bw_1w_2w_3w_2w_1N.
	\end{align*}
	
	Thus, by the disjointness of different Bruhat cells, the only possible intersection of orbits $(Bw_3N)^{S(F)\backslash P(F)},$ $(Bw_1w_3N)^{S(F)\backslash P(F)}$ and $(Bw_2w_3N)^{S(F)\backslash P(F)}$ must lie in $Bw_1w_2w_3w_2w_1N.$ Suppose $(Bw_3N)^{S(F)\backslash P(F)}\cap (Bw_2w_3N)^{S(F)\backslash P(F)}$ is nonempty. Then there exists some $b\in B(F,)$ $v_3\in N_3(F)$ and $u_{21}\in N_{21}(F)$ such that 
	\begin{equation}\label{0040}
	w_2w_{1}u_{21}^{-1}w_1w_2bw_3v_{3}w_2w_1u_{21}w_1w_2\in B(F)w_2w_3N_{23}(F).
	\end{equation}
	However, $w_2w_1u_{21}w_1w_2\in B(F)\sqcup B(F)w_2w_1w_2N(F)\sqcup B(F)w_2N(F).$ Denote by $\gamma=w_2w_{1}u_{21}^{-1}w_1w_2bw_3v_{3}w_2w_1u_{21}w_1w_2.$ Then applying \eqref{20} again we obtain that 
	\begin{equation}\label{0041}
	\gamma\in Bw_3N \sqcup Bw_2w_3w_2N  \sqcup Bw_1w_2w_3w_2w_1N\sqcup Bw_1w_2w_1w_3w_2w_1N.
	\end{equation}
	
	Nevertheless, the Bruhat cells on the right hand side of \eqref{0041} are different from $B(F)w_2w_3N_{23}(F),$ hence there is no intersection with $B(F)w_2w_3N_{23}(F),$ namely, \eqref{0040} cannot hold. A contradiction!
	\medskip 
	
	Thus the orbits $(Bw_3N)^{S(F)\backslash P(F)},$ $(Bw_1w_3N)^{S(F)\backslash P(F)}$ and $(Bw_2w_3N)^{S(F)\backslash P(F)}$ do not have any intersection. Next we need to show these orbits are transversal. We verify them separately as follows:
	
	\begin{enumerate}
		\item[(i).] Assume there are $b_1w_3u_1, b_2w_3v_1\in B(F)w_3N_3(F),$ and $\lambda_1, \lambda_2\in B(F)\backslash P(F),$ such that $\lambda_1^{-1}b_1w_3u_1\lambda_1=\lambda_2^{-1}b_2w_3v_1\lambda_2.$ Then by disjointness of different Bruhat cells, $\lambda_1$ and $\lambda_2$ must lie in the same connected component given on the right hand side of \eqref{0039}. Assume further $\lambda_1\neq \lambda_2,$ then $\lambda_1\lambda_2^{-1}\in B(F)w_2N(F)\sqcup B(F)w_2w_1w_2N(F).$ Then $\lambda_1^{-1}b_1w_3u_1\lambda_1$ can not equal $\lambda_2^{-1}b_2w_3v_1\lambda_2.$ A contradiction! Thus the conjugation of $B(F)\backslash P(F)$ on $B(F)w_3N_3(F)$ is transversal.
		\item[(ii).] Assume there are $b_1w_1w_3u_1, b_2w_1w_3v_1\in B(F)w_1w_3N_3(F),$ and $\lambda_1, \lambda_2\in B(F)\backslash P(F),$ such that $\lambda_1^{-1}b_1w_1w_3u_1\lambda_1=\lambda_2^{-1}b_2w_1w_3v_1\lambda_2.$ Then by disjointness of different Bruhat cells, $\lambda_1$ and $\lambda_2$ must lie in the same connected component given on the right hand side of \eqref{0039}. Assume further $\lambda_1\neq \lambda_2,$ then $\lambda_1\lambda_2^{-1}\in B(F)w_2N(F)\sqcup B(F)w_2w_1w_2N(F).$ Then by \eqref{20}, $\lambda_1^{-1}b_1w_1w_3u_1\lambda_1$ can not equal $\lambda_2^{-1}b_2w_1w_3v_1\lambda_2.$ A contradiction! Thus the conjugation of $B(F)\backslash P(F)$ on $B(F)w_1w_3N_3(F)$ is transversal.
		\item[(iii).] Assume there are $b_1w_2w_3u_1, b_2w_2w_3v_1\in B(F)w_1w_3N_3(F),$ and $\lambda_1, \lambda_2\in B(F)\backslash P(F),$ such that $\lambda_1^{-1}b_1w_2w_3u_1\lambda_1=\lambda_2^{-1}b_2w_2w_3v_1\lambda_2.$ Likewise, $\lambda_1$ and $\lambda_2$ must lie in the same connected component given on the right hand side of \eqref{0039}. Assume further $\lambda_1\neq \lambda_2,$ then $\lambda_1\lambda_2^{-1}\in B(F)w_2N(F)\sqcup B(F)w_2w_1w_2N(F).$ Then by \eqref{20}, $\lambda_1^{-1}b_1w_1w_3u_1\lambda_1\neq \lambda_2^{-1}b_2w_1w_3v_1\lambda_2.$ A contradiction! Thus the conjugation of $B(F)\backslash P(F)$ on $B(F)w_1w_3N_3(F)$ is transversal.
	\end{enumerate}
	
	Therefore, the set $Bw_3N_3\sqcup  Bw_1w_3N_{13}\sqcup Bw_2w_3N_{23}$ forms representatives of $(Bw_3N\sqcup  Bw_1w_3N\sqcup Bw_2w_3N)^{S(F)\backslash P(F)}.$
\end{proof}

\begin{lemma}\label{030}
	Let notation be as in Lemma \ref{22}. Then
	\begin{equation}\label{031}
	\mathfrak{G}_1=(Bw_2w_3N_{23})^{B(F)\backslash S(F)}\sqcup Sw_3N_3.
	\end{equation}
	Moreover, the set $Bw_2w_3N_{23}$ consists of representatives of $(Bw_2w_3N_{23})^{B(F)\backslash S(F)}.$
\end{lemma}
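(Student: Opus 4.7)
The proof follows the template of Lemma \ref{22} and Claim \ref{31}, replacing the role of $S(F)\backslash P(F)$ by $B(F)\backslash S(F)$. The Bruhat decomposition of the type-$(2,1,1)$ parabolic $S$ reads $S = B \sqcup Bw_1N_1$, so conjugation by a representative of $B(F)\backslash S(F)$ reduces, up to $B$-conjugation (which preserves each Bruhat cell), to conjugation by $s_1 = w_1$. Everything thus comes down to applying formula \eqref{20} to the three Bruhat cells $Bw_3N$, $Bw_1w_3N$, and $Bw_2w_3N$.

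For the first two cells, since $s_1$ and $s_3$ commute, the relevant Weyl group computations are elementary and formula \eqref{20} (combined with the fact that $N_1$ commutes with $w_3$, so $Bw_1N_1w_3N_3 = Bw_1w_3N_{13}$) yields
\begin{align*}
C(w_3)^{s_1} = C(w_1w_3)^{s_1} = C(w_3) \sqcup C(w_1w_3) = Sw_3N_3.
\end{align*}
In particular, $Sw_3N_3$ is closed under $B(F)\backslash S(F)$-conjugation. For the third cell, the braid identity gives $s_1w_2w_3s_1 = w_2w_1w_2w_3$, a reduced expression of length $4 = l(w_2w_3)+2$, so the first case of formula \eqref{20} yields
\begin{align*}
C(w_2w_3)^{s_1} = C(w_2w_1w_2w_3),
\end{align*}
a single Bruhat cell distinct from $C(w_3)$, $C(w_1w_3)$, and $C(w_2w_3)$. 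Hence $(Bw_2w_3N_{23})^{B(F)\backslash S(F)} \subseteq C(w_2w_3) \sqcup C(w_2w_1w_2w_3)$, which is disjoint from $Sw_3N_3$ by disjointness of distinct Bruhat cells. Combining these observations gives \eqref{031}.

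For the ``moreover'' statement, we run the transversality argument of Claim \ref{31}. Suppose $\lambda_1^{-1}\gamma_1\lambda_1 = \lambda_2^{-1}\gamma_2\lambda_2$ with $\gamma_1,\gamma_2 \in Bw_2w_3N_{23}$ and $\lambda_1,\lambda_2 \in B(F)\backslash S(F)$. If $\lambda_1\lambda_2^{-1}$ lies in the non-trivial Bruhat cell $Bw_1N_1$ of $S$, then conjugation by $\lambda_1\lambda_2^{-1}$ sends $C(w_2w_3)$ into $C(w_2w_1w_2w_3)$ by the length-four computation above, contradicting $\gamma_1 \in C(w_2w_3)$. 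Hence $\lambda_1\lambda_2^{-1} \in B(F)$, so $\lambda_1 = \lambda_2$ in $B(F)\backslash S(F)$, and then $\gamma_1 = \gamma_2$ by invertibility of conjugation. The main technical point is verifying $l(s_1w_2w_3s_1) = l(w_2w_3)+2$, which is what forces $C(w_2w_3)$ out of its own cell under $s_1$-conjugation and thereby keeps the three cells in distinct Bruhat classes; the remainder is routine bookkeeping parallel to Lemma \ref{22}.
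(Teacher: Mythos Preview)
Your proof is correct and follows essentially the same approach as the paper's. The paper's own proof is extremely terse—just the observation that $l(w_1w_2w_3w_1)=l(w_2w_3)+2$ (the same length computation you make, with $w_1w_2w_3w_1=w_2w_1w_2w_3$), which via \eqref{20} forces the $B\backslash S$-orbit of $C(w_2w_3)$ into a distinct cell and hence gives both transversality and disjointness from $Sw_3N_3$; you have simply filled in the orbit computations for $C(w_3)$ and $C(w_1w_3)$ and spelled out the transversality argument that the paper leaves implicit.
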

\begin{proof}
	Since $l(w_1w_2w_3w_1)=l(w_2w_3)+2,$ the set $Bw_2w_3N_{23}$ consists of representatives of $(Bw_2w_3N_{23})^{B(F)\backslash S(F)}.$ Hence \eqref{031} follows from Lemma \ref{22}.
\end{proof}
\medskip 

Let $\mathcal{A}_1(F)=(S_0w_3N_3)^{S_0\backslash P_0},$ $\mathcal{A}_2(F)=(B_0w_2w_3N_{23})^{B_0\backslash P_0},$ and $\mathcal{A}_0(F)=P_0.$ For $0\leq i\leq 2,$ we denote by 
\begin{align*}
&\K_{\Geo,\Sin,i}(x,y)=\sum_{\gamma\in \mathcal{A}_i(F)}\varphi(x^{-1}\gamma y);\\
&\K_{\infty,\Sin,i}^{(4)}(x,y)=\int_{N_P(F)\backslash N_P(\mathbb{A}_F)}\sum_{\gamma\in \mathcal{A}_i(F)}\varphi(x^{-1}u^{-1}\gamma y)du.
\end{align*}

Then by Lemma \ref{030}, we have 
\begin{align*}
\K_{\Sin}(x,y)=\K_{\Sin,0}(x,y)+\K_{\Sin,1}(x,y)+\K_{\Sin,2}(x,y),
\end{align*}
where $\K_{\Sin,0}(x,y)=\K_{\Geo,\Sin,0}(x,y)-\K_{\infty,\Sin,0}^{(4)}(x,y),$ and
\begin{align*}
\K_{\Sin,1}(x,y)&=\sum_{\lambda\in S_0(F)\backslash P_0(F)}\K_{\Geo,\Sin,1}(\lambda x,\lambda y)-\sum_{\lambda\in S_0(F)\backslash P_0(F)}\K_{\infty,\Sin,1}^{(4)}(\lambda x,\lambda y),\\
\K_{\Sin,2}(x,y)&=\sum_{\delta\in B_0(F)\backslash P_0(F)}\K_{\Geo,\Sin,2}(\delta x,\delta y)-\sum_{\delta\in B_0(F)\backslash P_0(F)}\K_{\infty,\Sin,2}^{(4)}(\delta x,\delta y).
\end{align*}

\medskip

One then has to handle terms on the right hand side of the above identity separately. We deal with $\K_{\Sin,0}(x,y)$ first. Denote by 
\begin{align*}
\K_{\infty,0}^{(k)}(x,x)=\sum_{\delta_k\in R_{k-1}(F)\backslash R_{3}(F)}\int_{[V_k']}\int_{[V_k]}\sum_{\gamma\in P_0(F)}\varphi((uu'\delta_kx)^{-1}\gamma yx)du\theta(u')du',
\end{align*}
where $2\leq k\leq 3.$ Denote by $\K_{\infty,0}^{(1)}(x,x)=\K_{\Sin,0}(x,x)-\K_{\infty,0}^{(3)}(x,x)-\K_{\infty,0}^{(2)}(x,x).$ Hence we can apply Proposition \ref{Fourier} to get 
\begin{align*}
\K_{\infty,0}^{(1)}(x,x)=\sum_{\delta\in N(F)\backslash P_0(F)}\int_{N(F)\backslash N(\mathbb{A}_F)}\sum_{\gamma\in P_0(F)}\varphi(x^{-1}\delta^{-1}n^{-1}\gamma x)\theta(u)du.
\end{align*}
\subsubsection{Contribution from $\K_{\infty,0}^{(1)}(x,x)$}
Now we defined the distribution $I_{\infty,0}^{(1)}(s)$ correspondingly, namely,
\begin{equation}\label{0033}
I_{\infty,0}^{(1)}(s):=\int_{Z_G(\mathbb{A}_F)P_0(F)\backslash G(\mathbb{A}_F)}\K_{\infty,0}^{(1)}(x,x)f(x,s)dx.
\end{equation}

Using Bruhat decomposition $P_0(F)=B_0(F)\sqcup B_0(F)w_1N(F)\sqcup B_0(F)w_2N(F)\sqcup B_0(F)w_2w_1N(F)\sqcup B_0(F)w_1w_2N(F)\sqcup B_0(F)w_1w_2w_1N(F)$ to further expand the function $\K_{\infty,0}^{(1)}(x,x),$ then substituting them into \eqref{0033}, we then obtain 
\begin{equation}\label{0034}
I_{\infty,0}^{(1)}(s)=\int_{Z_G(\mathbb{A}_F)N(F)\backslash G(\mathbb{A}_F)}\int_{[N]}\sum_{\gamma\in B_0(F)}\varphi(x^{-1}\delta^{-1}n^{-1}\gamma x)\theta(u)duf(x,s)dx.
\end{equation}

Now we can apply Iwasawa decomposition $G(\mathbb{A}_F)=N(\mathbb{A}_F)T(\mathbb{A}_F)K$ into \eqref{0034} to obtain
\begin{align*}
I_{\infty,0}^{(1)}(s)=&\int_{(\mathbb{A}_F^{\times})^3}\int_{K}\sum_{t_1,t_2,t_3\in F^{\times}}\int_{\mathbb{A}_F^6}\varphi\left(k^{-1}\begin{pmatrix}
t_1& &&\\
&t_2&&\\
&&t_3&\\
&&&1
\end{pmatrix}\begin{pmatrix}
1&a&b&c\\
&1&e&f\\
&&1&g\\
&&&1
\end{pmatrix}k\right)\\
&\quad\theta(\alpha a)\theta(\beta e)\theta(\gamma g)\tau(\alpha)\tau(\beta)^2\tau(\gamma)^3|\alpha|^s|\beta|^{2s}|\gamma|^{3s}f(k,s)dndkd^{\times}\alpha d^{\times}\beta d^{\times}\gamma,
\end{align*}
where $dn=dadbdcdedfdg.$ Then by Tate's thesis, we conclude that $I_{\infty,0}^{(1)}(s)$ is an integral representation for  $\Lambda(s,\tau)\Lambda(2s,\tau^2)\Lambda(3s,\tau^3)\Lambda(4s,\tau^4).$ Hence $I_{\infty,0}^{(1)}(s)$ converges absolutely when $\Re(s)>1,$ and it has the analytic property 
\begin{equation}\label{0035}
I_{\infty,0}^{(1)}(s)\sim \Lambda(s,\tau)\Lambda(2s,\tau^2)\Lambda(3s,\tau^3)\Lambda(4s,\tau^4).
\end{equation}

As a consequence, $I_{\infty,0}^{(1)}(s)$ admits a meromorphic continuation to $s$-plane, with possible poles (which are simple if exist) at $s\in\{1, 1/2, 1/3, 1/4\}.$ 
\medskip 

\subsubsection{Contributions from $\K_{\infty}^{(2)}(x,x)$}
For a Weyl element $w,$ denote by $C(w)$ the Bruhat cell $B(F)w N(F).$ Then 
\begin{align*}
\GL(4,F)=&P(F)\sqcup C(w_3)\sqcup C(w_1w_3)\sqcup C(w_2w_3)\sqcup C(w_1w_3w_2)\sqcup C(w_1w_2w_3w_1w_2w_1) \\
&\ \sqcup C(w_3w_2)\sqcup C(w_2w_1w_3)\sqcup C(w_2w_3w_2)\sqcup C(w_3w_2w_1)\sqcup C(w_3w_1w_2w_1)\\
&\  \sqcup C(w_2w_3w_1w_2)\sqcup C(w_2w_3w_2w_1)\sqcup C(w_1w_2w_3w_1)\sqcup C(w_1w_2w_3w_2)\\
&\  \sqcup C(w_2w_3w_1w_2w_1)\sqcup C(w_1w_2w_3w_1w_2)\sqcup C(w_1w_2w_3w_2w_1)\sqcup C(w_1w_2w_3).
\end{align*}

Based on this decomposition, we can write $\K_{\infty}^{(2)}(x,y)=\sum_{i=0}^{18}\K_{\infty,i}^{(2)}(x,y),$ where
\begin{equation}\label{0001}
\K_{\infty,i}^{(2)}(x,y)=\sum_{\delta\in R_{1}(F)\backslash P_{0}(F)}\int_{[V_2']}\int_{[V_2]}\sum_{\gamma\in \mathcal{B}_i^{(2)}(F)}\varphi((uu'\delta x)^{-1}\gamma y)du\theta(u')du',
\end{equation}
where $\mathcal{B}_{0}^{(2)}(F)=P_0(F),$ and $\sqcup_{i=1}^{18}\mathcal{B}_{i}^{(2)}(F)$ consists of the above 18 Bruhat cells modulo $Z_G(F)$. Explicitly, let $\mathcal{B}_1^{(2)}(F)=C(w_2w_3)\sqcup C(w_3w_2)\sqcup C(w_2w_3w_2),$ $\mathcal{B}_2^{(2)}(F)=C(w_1w_2w_3w_2w_1),$ and    
$\mathcal{B}_3^{(2)}(F)=C(w_1w_2w_3w_1w_2w_1).$ Denote also by 
\begin{align*}
\K_{\infty,i}^{*}(x)=\int_{[N]}\int_{[N_1]}\int_{[V_2]}\sum_{\gamma\in \mathcal{B}_i^{(2)}(F)}\varphi(x^{-1}n^{-1}u'^{-1}u^{-1}\gamma nx)du\theta(u')du'dn.
\end{align*}

Then a straightforward computation shows that 
\begin{align*}
\int_{N(F)\backslash N(\mathbb{A}_F)}\int_{N_1(F)\backslash N_1(\mathbb{A}_F)}\int_{V_2(F)\backslash V_2(\mathbb{A}_F)}\K(uu'nx, nx)du\theta(u')du'dn=\sum_{i=0}^2\K_{\infty,i}^{*}(x).
\end{align*}

Thus, formally one has $I_{\infty}^{(2)}(s)=I_{\infty,0}^{(2)}(s)+I_{\infty,1}^{(2)}(s)+I_{\infty,2}^{(2)}(s)+I_{\infty,3}^{(2)}(s),$ where 
\begin{equation}\label{0002}
I_{\infty,i}^{(2)}(s)=\int_{Z_G(\mathbb{A}_F)P_0(F)\backslash G(\mathbb{A}_F)}\K_{\infty,i}^{(2)}(x,x)f(x,s)dx,\quad 1\leq i\leq 3.
\end{equation}

\begin{prop}\label{11}
	Let notation be as before. Then $I_{\infty,2}^{(2)}(s)$ admits a meromorphic continuation to the whole $s$-plane, and 
	\begin{equation}\label{0036}
I_{\infty,2}^{(2)}(s)\sim \Lambda(s,\tau)\Lambda(2s,\tau^2)\Lambda(4s,\tau^4).
	\end{equation}
	
\end{prop}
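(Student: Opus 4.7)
The plan is to follow the template set by the analogous $\GL(3)$ computation of $I_{\infty,3}^{(2)}(s)$ in Section \ref{2.2} and by the $I_{\infty,0}^{(1)}(s)$ computation just completed, now applied to the Weyl element $\tilde{w}:=w_1w_2w_3w_2w_1$. Note that $\tilde{w}$ equals the transposition $(1\,4)$ in $S_4$, has length $5$, and so the Bruhat cell factors as $C(\tilde{w})=B(F)\tilde{w}U_{\tilde{w}}(F)$, where $U_{\tilde{w}}\subset N$ is the five-dimensional unipotent supported on the entries $(1,2),(1,3),(1,4),(2,4),(3,4)$ that are inverted by $\tilde{w}$.

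First, combining the outer integration over $Z_G(\mathbb{A}_F)P_0(F)\backslash G(\mathbb{A}_F)$ in \eqref{0002} with the $\delta$-sum inside \eqref{0001} unfolds to
\begin{align*}
I_{\infty,2}^{(2)}(s)=\int_{Z_G(\mathbb{A}_F)R_1(F)\backslash G(\mathbb{A}_F)}\int_{[V_2']}\int_{[V_2]}\sum_{\gamma\in Z_G(F)\backslash C(\tilde{w})}\varphi(x^{-1}u'^{-1}u^{-1}\gamma x)\theta(u')\,du\,du'\,f(x,s)\,dx.
\end{align*}
Writing $\gamma=b\tilde{w}n$ with $b\in B(F)$ and $n\in U_{\tilde{w}}(F)$, and applying Iwasawa $G(\mathbb{A}_F)=N(\mathbb{A}_F)T(\mathbb{A}_F)K$, brings the integrand into a form parallel to the one obtained for $I_{\infty,3}^{(2)}(s)$ in Section \ref{2.2}: an iterated integral over $K$, over three torus coordinates $t_1,t_2,t_3\in\mathbb{A}_F^{\times}$ carrying $\tau$-twisted characters $\tau(t_i)^i|t_i|^{is}$ for $i=1,2,3$, and over the unipotent coordinates of $U_{\tilde{w}}(\mathbb{A}_F)$ paired against $\theta(u)$ and $\theta(u')$.

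The core of the argument is the analysis of the unipotent integrals. The characters $\theta$ on $[V_2]$ and $[V_2']$ pick out Fourier modes along the simple-root coordinates of $N$; after conjugation by $\tilde{w}$, these Fourier exponents are redistributed among the root coordinates of $U_{\tilde{w}}$. Two of the three torus directions $t_1, t_2$ then yield, via Tate's thesis, absolutely convergent integral representations of $\Lambda(s,\tau)$ and $\Lambda(2s,\tau^2)$ respectively. The middle direction $t_3$ — which in the $\K_{\infty,0}^{(1)}$ calculation produced $\Lambda(3s,\tau^3)$ — is annihilated: the relevant Fourier exponent is moved by $\tilde{w}$-conjugation onto a $U_{\tilde{w}}$-coordinate on which no matching kernel survives, so the $t_3$-integration is killed by a vanishing Fourier integral. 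This is exactly the $\GL(4)$ analogue of the mechanism producing the factor $\Lambda(s+2,\tau)^{-1}$ in \eqref{0029}. Finally, the Tate integral built into $f(x,s)$ contributes $\Lambda(4s,\tau^4)$.

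Combining the three surviving Tate factors gives absolute convergence for $\Re(s)>1$, the analytic property \eqref{0036}, and meromorphic continuation of $I_{\infty,2}^{(2)}(s)$ to the whole $s$-plane, inherited from $\Lambda(s,\tau),\Lambda(2s,\tau^2),\Lambda(4s,\tau^4)$. The principal obstacle will be the bookkeeping in the unipotent analysis: tracing how the five root coordinates of $U_{\tilde{w}}$ are shuffled by $\tilde{w}$, verifying that the unique annihilated direction is the weight-$3s$ one, and checking that no cross-terms couple the three surviving Tate integrals. Once this combinatorial step is secured, the remainder is a direct generalization of the $\GL(3)$ computation in Section \ref{2.2}.
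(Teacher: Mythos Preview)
Your overall shape --- unfold to $R_1(F)\backslash G(\mathbb{A}_F)$, apply Iwasawa, and read off Tate integrals --- is right, but the mechanism you propose for eliminating the weight-$3s$ factor is wrong, and the analogy you draw is to the wrong $\GL(3)$ computation.

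You claim that ``the $t_3$-integration is killed by a vanishing Fourier integral'' after conjugation by $\tilde{w}$, in analogy with the factor $\Lambda(s+2,\tau)^{-1}$ in \eqref{0029}. But $\tilde{w}=w_1w_2w_3w_2w_1$ is the transposition $(1\,4)$; it fixes the middle $2\times2$ block, so conjugation by $\tilde{w}$ does nothing to the $(2,3)$ root direction, and no Fourier mode there is annihilated. (The $\Lambda(s+2,\tau)^{-1}$ in \eqref{0029} came from intertwining-operator theory on a genuinely different cell, not from a vanishing orbital integral.) If you carry out your bookkeeping as stated, you will find no cancellation and the torus sum will not converge.

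The correct mechanism is the one you have already seen in the $h(t_0)$ computation for $I_{\Sin,1,0}^{(1,1)}(s)$ in Section~\ref{2.2}. Write $\gamma=u_1t\tilde{w}u_2$ with $t=\diag(t_1,t_2,t_3,1)$ ranging over the $F$-rational torus modulo center, and split $I_{\infty,2}^{(2)}(s)=\sum_{t_1,t_2,t_3}I_{\infty,2}^{(2)}(s;t_1,t_2,t_3)$. For $v\in N_2(\mathbb{A}_F)$ the change of variable $x\mapsto vx$ shows
\[
I_{\infty,2}^{(2)}(s;t_1,t_2,t_3)=\theta\bigl((1-t_3t_2^{-1})v\bigr)\,I_{\infty,2}^{(2)}(s;t_1,t_2,t_3),
\]
forcing $t_2=t_3$. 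After this collapse, the Iwasawa torus takes the shape $\diag(\gamma^2 t_1,\gamma,\gamma,1)$; the adelic variable $\gamma$ is then trapped in a compact set by the support of $\varphi$ and contributes only an entire factor, while the remaining two adelic torus variables and $f(k,s)$ give $\Lambda(s,\tau)$, $\Lambda(2s,\tau^2)$, $\Lambda(4s,\tau^4)$ via Tate. The missing $\Lambda(3s,\tau^3)$ is thus explained by a rational constraint on the Bruhat torus, not by a Fourier vanishing in the unipotent.
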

\begin{proof}
For any $\gamma\in\mathcal{B}_2^{(2)}(F),$ we can write $\gamma$ uniquely as $\gamma=u_1tu_2,$ where $u_1\in N(F),$ $t=\diag(t_1,t_2,t_3,1)$ and $u_2\in N_{w_1w_2w_3w_2w_1}(F).$ Let $I_4\neq v\in N_2(\mathbb{A}_F).$ Substituting \eqref{0001} into \eqref{0002} we then obtain, by writing $X_1=Z_G(\mathbb{A}_F)R_1(F)\backslash G(\mathbb{A}_F),$ that $I_{\infty,2}^{(2)}(s)=\sum_{t_1,t_2,t_3}I_{\infty,2}^{(2)}(s;t_1,t_2,t_3),$ where 
\begin{align*}
I_{\infty,2}^{(2)}(s;t_1,t_2,t_3)=&\int_{X_1}
\int_{[V_2']}\int_{[V_2]}\sum_{u_1}\sum_{u_2}\varphi((uu'x)^{-1}\gamma x)du\theta(u')du'f(x,s)dx,
\end{align*}
since $I_{\infty,2}^{(2)}(s)$ converges absolutely when $\Re(s)>1.$ Now a changing of variable $x\mapsto vx$ implies $I_{\infty,2}^{(2)}(s;t_1,t_2,t_3)=\theta((1-t_3t_2^{-1})v)I_{\infty,2}^{(2)}(s;t_1,t_2,t_3).$ Hence, we have  $I_{\infty,2}^{(2)}(s;t_1,t_2,t_3)=0$ unless $t_2=t_3.$ Therefore, using Iwasawa decomposition, 
\begin{align*}
I_{\infty,2}^{(2)}(s)=&\int_{(\mathbb{A}_F^{\times})^3}\int_{K}\sum_{t_1\in F^{\times}}\int_{\mathbb{A}_F^{11}}\varphi\left(k^{-1}\lambda_{t_1,\gamma,e,g}^{a,b,c,f}\widetilde{w}\begin{pmatrix}
1&a'&b'&c'\\
&1&&f'\\
&&1&g'\\
&&&1
\end{pmatrix}k\right)\theta(\alpha e)\theta(\beta g)\\
&\quad\theta(\beta g')\tau^2(\alpha)\tau(\beta)|\alpha|^{2s}|\beta|^{s}\tau \omega(\gamma)|\gamma|^{s}f(k,s)dndkd^{\times}\alpha d^{\times}\beta d^{\times}\gamma,
\end{align*}
where $\widetilde{w}=w_1w_2w_3w_2w_1,$ $dn=dadb\cdots dg da'\cdots dg';$ and
\begin{align*}
\lambda_{t_1,\gamma,e,g}^{a,b,c,f}=\begin{pmatrix}
1&a&b&c\\
&1&e&f\\
&&1&g\\
&&&1
\end{pmatrix}\begin{pmatrix}
\gamma^2t_1& &&\\
&\gamma&&\\
&&\gamma&\\
&&&1
\end{pmatrix}.
\end{align*}
Since $\gamma$ runs over a compact subset of $\mathbb{A}_{F}^{\times},$ we then conclude \eqref{11} from Tate's thesis.
\end{proof}

\begin{prop}\label{12}
	Let notation be as before. Then $I_{\infty,3}^{(2)}(s)$ admits a meromorphic continuation to the whole $s$-plane, and 
	\begin{align*}
	\frac{I_{\infty,3}^{(2)}(s)}{\Lambda(2s,\tau^2)\Lambda(4s,\tau^4)}=\sum_{\substack{[E:F]=2}}Q_E(s,\tau)\Lambda(s,\tau\circ N_{E/F})+\frac{Q(s,\tau)\Lambda(s,\tau)\Lambda(2s,\tau^2)}{\Lambda(s+1,\tau)} ,
	\end{align*}
	where the sum over number fields $E$ is finite, each $Q_E(s,\tau)$ is entire; and $Q(s,\tau)$ is entire.
\end{prop}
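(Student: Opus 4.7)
The approach is to exploit the fact that $\mathcal{B}_3^{(2)}(F) = C(w_0)$ is the Bruhat cell of the longest Weyl element $w_0 = w_1 w_2 w_3 w_1 w_2 w_1$ of $W(\GL_4)$, so each $\gamma \in C(w_0)$ admits a unique factorization $\gamma = u_1 t w_0 u_2$ with $u_1 \in N(F)$, $t = \diag(t_1, t_2, t_3, 1)$ modulo $Z_G(F)$, and $u_2 \in N(F)$ (since $N_{w_0} = N$). I would substitute this parametrization into the expression (\ref{0002}) for $i = 3$ and apply the Iwasawa decomposition $G(\mathbb{A}_F) = N(\mathbb{A}_F) T(\mathbb{A}_F) K$, following the template of Proposition \ref{11}.

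Next, I would perform a sequence of changes of variable $x \mapsto vx$ for $v$ ranging over carefully chosen one-parameter subgroups inside $V_2$ and $V_2'$. As in the proof of Proposition \ref{11}, the non-triviality of the character $\theta$ on $V_2$ forces multiplicative relations of the form $t_i = t_j^{?}$, eliminating two of the three independent parameters and collapsing the diagonal sum to a one-parameter family. The remaining integrations over $u_1$, over the ``free'' coordinates of $u_2$, and over the surviving Iwasawa parameters $\tau^i(t_i)|t_i|^{is}$ separate by Tate's thesis into a product of one-dimensional integrals producing the factors $\Lambda(2s,\tau^2)$ and $\Lambda(4s,\tau^4)$ that appear in the denominator on the left-hand side of the claimed identity.

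What remains after this reduction should be recognizable as a $\GL(2)$ Jacquet--Zagier orbital distribution on an embedded $\GL(2)$ (associated to the embedded $\GL(2)$ visible in the middle two rows/columns of $w_0$), coupled with one more Tate integral in a transverse variable giving the remaining $\Lambda(s,\tau)$. At this stage I would invoke the explicit evaluation of Jacquet and Zagier \cite{JZ87}: the big-cell orbital distribution on $\GL(2)$ splits, by the classification of $F$-rational semisimple orbits, into an elliptic part indexed by the quadratic étale $F$-algebras $E$, each contributing (after the finitely many surviving test data) the term $Q_E(s,\tau)\Lambda(s,\tau\circ N_{E/F})$, plus a hyperbolic/regular part contributing $Q(s,\tau)\Lambda(s,\tau)\Lambda(2s,\tau^2)/\Lambda(s+1,\tau)$. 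The entireness of the coefficients $Q_E$ and $Q$ follows from the compact support modulo center of the test function $\varphi$ and the smoothness of the orbital integrals away from the singular orbits; the finiteness of the sum over $E$ is a consequence of this compact support, which confines the discriminant of the relevant quadratic forms to a bounded set.

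The main obstacle is Step 2: correctly identifying, through the conjugation arguments and the action of $w_0$ on $V_2 \times V_2'$, which rank-two subtorus of $T$ is annihilated by the Fourier constraints, and which $\GL(2)$ embedding survives. A careful tracking of the root structure and the matching with the Eisenstein section $f(x,s)$ — in particular ensuring the normalization that produces precisely $\Lambda(s+1,\tau)$ (rather than some other shift) in the denominator via the relevant intertwining operator — is the delicate point. Once this matching is in place, the $\GL(2)$ computation of \cite{JZ87} applies verbatim and delivers the stated decomposition.
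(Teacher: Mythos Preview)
Your high-level plan is correct and matches the paper: peel off Tate factors $\Lambda(2s,\tau^2)\Lambda(4s,\tau^4)$ via Iwasawa, then recognize what remains as a $\GL(2)$ Jacquet--Zagier distribution and invoke \cite{JZ87} to get the elliptic sum over quadratic $E/F$ plus the hyperbolic term $\Lambda(s,\tau)\Lambda(2s,\tau^2)/\Lambda(s+1,\tau)$. That part is fine.

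The gap is in your proposed mechanism for the reduction. You suggest running conjugation arguments $x\mapsto vx$ as in Proposition~\ref{11} to force relations $t_i=t_j^{?}$ and ``collapse the diagonal sum to a one-parameter family.'' That is not what happens here, and it cannot happen: for the longest element $w_0$ the cell $C(w_0)$ is saturated ($N_{w_0}=N$), so there is no unipotent direction $v$ that commutes past $w_0$ to produce a nontrivial character constraint on the torus. In the paper's actual computation the sum over the $F^\times$-parameters $t_1,t_2$ survives intact into the Iwasawa expression; nothing collapses. If you did succeed in collapsing to a one-parameter family you would have destroyed precisely the two-parameter semisimple-orbit structure that you then want to identify with $\GL(2)$ conjugacy classes.

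The paper's mechanism is instead the algebraic factorization
\[
w_0=\widetilde{w}_1\,w_2,\qquad \widetilde{w}_1=w_1w_2w_3w_2w_1,
\]
together with the identity \eqref{0003}, which moves the $e$-coordinate of the upper unipotent across $\widetilde{w}_1$ and exhibits the $w_2$-factor as living in the middle $2\times2$ block. After Iwasawa, the $y_1$- and $K$-integrals produce $\Lambda(2s,\tau^2)\Lambda(4s,\tau^4)$ (this is the $\widetilde{w}_1$-part, morally re-running the structure of Proposition~\ref{11}); $y_3$ is trapped in a compact set and gives an entire factor; and the $y_2$-integration together with the surviving $(t_1,t_2)$-sum and the $e,p$-variables is, via \eqref{0003} and the Fourier expansion of the kernel, literally the geometric side of \cite{JZ87} on the embedded $\GL(2)$. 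So the Weyl-element factorization, not a conjugation constraint, is what isolates the $\GL(2)$ piece.
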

\begin{proof}
	Let $\widetilde{w}=w_1w_2w_3w_1w_2w_1=\widetilde{w}_1w_2,$ where $\widetilde{w}_1=w_1w_2w_3w_2w_1$  Then 
	\begin{equation}\label{0003}
	\begin{pmatrix}
	1&a&b&c\\
	&1&e&f\\
	&&1&g\\
	&&&1
	\end{pmatrix}\widetilde{w}=\begin{pmatrix}
	1&a&b&c\\
	&1&&f\\
	&&1&g\\
	&&&1
	\end{pmatrix}\widetilde{w}_1\begin{pmatrix}
	1&&&\\
	&1&e&\\
	&&1&\\
	&&&1
	\end{pmatrix}w_2.
	\end{equation}
	
	Then we can apply Iwasawa decomposition to see
		\begin{align*}
		I_{\infty,3}^{(2)}(s)
		=&\int_{(\mathbb{A}_F^{\times})^3}\int_{K}\int_{\mathbb{A}_F^{12}}\sum_{t_1,t_2}\varphi\left(k^{-1}\begin{pmatrix}
		1&a&b&c\\
		&1&e&f\\
		&&1&g\\
		&&&1
		\end{pmatrix}\widetilde{w}\lambda_{t_1,t_2,y_2,y_3}^{h,l,m,p,q,r}k\right)\theta(y_1g)\theta(y_1r)\\
		&\  \theta(y_2e)\theta(y_2p)|y_3|^{s+3}|y_2|^{s+1}|y_1|^{2s}\tau(y_1)^2\tau(y_2)\tau\omega(y_3)dnd^{\times}yf(k,s)dk,
		\end{align*}
where $dn=dadb\cdots dg\cdot dhdl\cdots dr;$ $d^{\times}y=d^{\times}y_1d^{\times}y_2d^{\times}y_3;$ and 
\begin{align*}
\lambda_{t_1,t_2,y_2,y_3}^{h,l,m,p,q,r}=\begin{pmatrix}
t_1y_3^2& &&\\
&t_2y_3y_2&\\
&&y_3y_2^{-1}\\
&&&1
\end{pmatrix}\begin{pmatrix}
1&h&l&m\\
&1&p&q\\
&&1&r\\
&&&1
\end{pmatrix}.
\end{align*}	

Then Proposition \ref{12} follows from induction: the integral over $y_1$ and $k$ contributes the $L$-factor $\Lambda(2s,\tau^2)\Lambda(4s,\tau^4);$ and $y_3$ runs over a compact set, thus the integral over $y_3$ contributes an entire function; the only thing left is the contribution from integration over $y_2,$ which can be reduced (by \eqref{0003} and Fourier expansion of $\K(x,y)$) to the geometric side of Jacquet-Zagier's work \cite{JZ87} in $\GL(2)$ case. 
\end{proof}

However, neither $I_{\infty,0}^{(2)}(s)$ nor $I_{\infty,1}^{(2)}(s)$ converges for any $s\in\mathbb{C}.$ Since the contribution from $\K_{\infty,0}^{(2)}(x,y)$ has been handled in \eqref{0033}, we only need to deal with the contribution from $\K_{\infty,1}^{(2)}(x,y).$ In fact, we will see in the below, $\K_{\infty,1}^{(2)}(x,x),$ in conjunction with some singular parts of $\K_{\infty}^{(3)}(x,x),$ will be canceled by the singular part of $\K_{\Sin,2}(x,x)=\K_{\Geo,\Sin,2}(x,x)-\K_{\infty,\Sin,2}^{(4)}(x,x).$ 

\medskip

\subsubsection{Contributions Related to $\K_{\Sin,1}(x,x),$ $\K_{\Sin,2}(x,x)$ and $\K_{\infty}^{(3)}(x,x)$}
By Bruhat decomposition, 
\begin{equation}\label{0037}
Pw_3P=Sw_3S\sqcup Sw_3Sw_2S\sqcup Sw_2Sw_3S\sqcup Sw_2Sw_3Sw_2S.
\end{equation}

On the other hand, one can verify that 
\begin{align*}
\int_{[N_{S}]}\sum_{\delta\in R_{2}(F)\backslash R_{3}(F)}\int_{[V_3']}\int_{[V_3]}\sum_{\gamma\in X}\varphi((uu'n\delta x)^{-1}\gamma nx)du\theta(u')du'dn=0,
\end{align*}
where $N_S$ is the unipotent subgroup of $S,$ and $X=Sw_3Sw_2S\sqcup Sw_2Sw_3S.$ Hence, we only need to consider the contribution from $\gamma\in Sw_3S\sqcup Sw_2Sw_3Sw_2S.$ Let 
\begin{align*}
&\K_{\infty,1}^{(3)}(x,x)=\sum_{\delta}\int_{[\widetilde{V}_3']}\int_{[\widetilde{V}_3]}\sum_{\gamma\in Sw_3S}\varphi(x^{-1}\delta^{-1}u^{-1}v^{-1}\gamma \delta x)du\theta(v)dv,\\
&\K_{\infty,2}^{(3)}(x,x)=\sum_{\delta}\int_{[\widetilde{V}_3']}\int_{[\widetilde{V}_3]}\sum_{\gamma\in Sw_2Sw_3Sw_2S}\varphi(x^{-1}\delta^{-1}u^{-1}v^{-1}\gamma \delta x)du\theta(v)dv,
\end{align*}
where $\delta$ runs through $R_{2}(F)\backslash R_{3}(F).$ Denote also by 
\begin{align*}
\K_{\Sin,1}(x;y)=\sum_{\lambda\in S_0\backslash P_0}\K_{\Geo,\Sin,1}(\lambda x,y\lambda x)-\sum_{\lambda\in S_0\backslash P_0}\K_{\infty,\Sin,1}^{(4)}(\lambda x,y\lambda x).
\end{align*}
Then $\K_{\Sin,1}(x;y)$ is a Schwartz function on $S_0(F)\backslash R_3(\mathbb{A}_F).$ Hence, we can apply Fourier expansion to $\K_{\Sin,1}(x;y)$ and evaluate at $y=I_4$ to obtain
\begin{align*}
\K_{\Sin,1}(x,x)=\K_{\Sin,1}(x;I_4)=\K_{\Sin,1}^{(1)}(x,x)+\K_{\Sin,1}^{(2)}(x,x),
\end{align*}
where we denote by $T_3=\diag(I_2,\GL_1,1),$ and
\begin{align*}
\K_{\Sin,1}^{(1)}(x,x)&=\sum_{\lambda\in T_3(F)B_0(F)\backslash P_0(F)}\int_{[N_P\backslash N]}\K_{\Geo,\Sin,1}(u\lambda x,\lambda x)\theta(u)du;\\
\K_{\Sin,1}^{(2)}(x,x)&=\sum_{\lambda\in T_3(F)N(F)\backslash P_0(F)}\int_{[N_1]}\int_{[N_{21}]}\K_{\Geo,\Sin,1}(uv\lambda x,\lambda x)\theta(u)dudv.
\end{align*}

\begin{lemma}\label{30}
Let notation be as before. Then the distribution 
\begin{align*}
I_{\Sin,1}^{(2)}(s)=\int_{Z_G(\mathbb{A}_F)P_0(F)\backslash G(\mathbb{A}_F)}\K_{\Sin,1}^{(2)}(x,x)f(x,s)dx
\end{align*}
converges absolutely when $\Re(s)>1.$ Moreover, $I_{\Sin,1}^{(2)}(s)$ admits a meromorphic continuation to $s\in \mathbb{C}$ such that 
\begin{equation}\label{32}
I_{\Sin,1}^{(2)}(s)\sim \Lambda(s,\tau)^2\Lambda(4s,\tau^4).
\end{equation}
\end{lemma}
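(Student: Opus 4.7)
The plan is to mirror the treatment of $I_{\Sin,1,\neq 0}^{(1,1)}(s)$ and $I_{\Sin,1,0}^{(1,1)}(s)$ carried out for $\GL(3)$ in Section~\ref{2.2}, adapted to the higher-rank combinatorics of $\GL(4)$. First, I would substitute the definition $\K_{\Geo,\Sin,1}(uv\lambda x, \lambda x)=\sum_{\gamma\in(S_0w_3N_3)^{S_0\backslash P_0}}\varphi((uv\lambda x)^{-1}\gamma \lambda x)$ into the formula for $\K_{\Sin,1}^{(2)}(x,x)$, and then regroup the double coset sum over $\lambda\in T_3(F)N(F)\backslash P_0(F)$ with the inner sum over $\gamma$. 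Using Lemma~\ref{22} and Claim~\ref{31} as models, one unfolds to an integral over $[N_1]\times[N_{21}]$ of a single sum over the coset $B_0 w_3 N_3$ in $Z_G\backslash \GL(4)$. Cells whose Weyl projection is incompatible with the character $\theta$ vanish by orthogonality, and only a residual Borel-torus contribution survives.

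Second, I would apply the Iwasawa decomposition $G(\mathbb{A}_F)=N(\mathbb{A}_F)T(\mathbb{A}_F)K$, writing $x=ntk$ and $t=\diag(t_1,t_2,t_3,1)$. The unipotent integrations against $\theta$ impose linear constraints on the torus coordinates, in exact analogy with the relation $t_2=t_1^2$ that emerged in Section~\ref{2.2}; here the analogous constraints collapse the three-parameter torus to an effectively two-parameter family, leaving one compactly supported coordinate which contributes only an entire factor. The remaining integrand then factorizes as a product of Tate integrals along with the integration of $f(k,s)$ over $K$.

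Third, by Tate's thesis, two of the one-dimensional torus Mellin integrals realize the completed $L$-factor $\Lambda(s,\tau)$ (giving the $\Lambda(s,\tau)^2$ appearance), and the Tate integral absorbing the weight $|{\det t}|^{4s}$ coming from the $n=4$ Eisenstein datum $f(x,s)$ produces $\Lambda(4s,\tau^4)$. Absolute convergence for $\Re(s)>1$ follows from the standard estimates on Tate integrals over the appropriate half plane, while meromorphic continuation to $s\in\mathbb{C}$ with the asserted equivalence $I_{\Sin,1}^{(2)}(s)\sim \Lambda(s,\tau)^2\Lambda(4s,\tau^4)$ follows from Tate's functional equation applied term-by-term.

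The main obstacle is the first step: accurately identifying the effective set of Bruhat-type representatives after unfolding $\lambda\in T_3(F)N(F)\backslash P_0(F)$ and $\gamma\in(S_0w_3N_3)^{S_0\backslash P_0}$, and verifying that the $\theta$-integrations over $[N_1]\times[N_{21}]$ annihilate precisely the cells whose Weyl element does not intertwine with the long-word component $w_3$. This demands a Bruhat analysis in the spirit of \eqref{20} and \eqref{0039}, keeping track of the simultaneous disjointness of the torus level and the unipotent character. Once this alignment is verified, the remaining computation is mechanical: the unfolded expression is a product of independent Tate integrals (plus a compactly supported piece), and the asymptotic on the right-hand side of \eqref{32} is immediate.
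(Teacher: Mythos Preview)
Your proposal is essentially the same approach as the paper's own proof, which consists of only two sentences: reduce to the treatment of $I_{\Sin,1,0}^{(1,1)}(s)$ and $I_{\Sin,1,\neq 0}^{(1,1)}(s)$ in the $\GL(3)$ case (Section~\ref{2.2}), and then assert that a straightforward computation gives \eqref{32}. You have correctly identified this reduction and supplied more of the intermediate structure (unfolding the coset sum, the torus constraint via conjugation, Iwasawa decomposition, Tate's thesis) than the paper itself does.

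One small point to be careful with: your description of how the torus collapses (``three-parameter torus to an effectively two-parameter family, leaving one compactly supported coordinate'') should be checked against the actual count of $\Lambda(s,\tau)$ factors needed. In the $\GL(3)$ case one torus direction was forced to be compact by the conjugation constraint and one genuine Tate integral produced $\Lambda(s,\tau)$, with $f(k,s)$ contributing $\Lambda(3s,\tau^3)$; here you need two copies of $\Lambda(s,\tau)$ in addition to $\Lambda(4s,\tau^4)$ from $f(k,s)$, so two of the three torus directions must survive as honest Tate integrals after the constraint, with only one becoming compact. This is a bookkeeping matter rather than a gap in the strategy.
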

\begin{proof}
This can be reduced to the treatment of $I_{\Sin,1,0}^{(1,1)}(s)$ and $I_{\Sin,1,\neq 0}^{(1,1)}(s)$ in $\GL(3)$ case. In fact, a straightforward computation shows \eqref{32}.
\end{proof}

Recall that we have the decomposition \eqref{0037}. In this subsection, we further decompose the set $Sw_2Sw_3Sw_2S:$ 

\begin{lemma}\label{28.}
	Let notation be as before. Then $Sw_2Sw_3Sw_2S$ is equal to
	\begin{equation}\label{0018}
	(Bw_2w_3w_2N\sqcup Bw_2w_3w_2w_1N\sqcup Bw_2w_1w_3w_2N\sqcup Bw_2w_1w_3w_2w_1N)^{B\backslash S}.
	\end{equation}
	Moreover, the set $Bw_2w_3w_2N\sqcup Bw_2w_3w_2w_1\sqcup Bw_2w_1w_3w_2N\sqcup Bw_2w_1w_3w_2w_1$ consists of representatives under the conjugation of $B(F)\backslash S(F).$
\end{lemma}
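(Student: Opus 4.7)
The plan is to follow the strategy of Claim \ref{31}, now adapted to the two-sided double-coset space $S(F)\backslash G(F)/S(F)$. The key inputs are the decomposition $S(F) = B(F) \sqcup B(F)w_1 N_1(F)$ (since the Weyl group of the Levi of $S$ is $\{1, w_1\}$), the Coxeter relations $w_1 w_3 = w_3 w_1$ and $w_1^2 = 1$, the braid relation $w_2 w_3 w_2 = w_3 w_2 w_3$, and the formula \eqref{20} for conjugating a Bruhat cell by a simple reflection.

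First I would prove the set-theoretic identity in \eqref{0018}. Writing $g \in Sw_2Sw_3Sw_2S$ as $s_1 w_2 s_2 w_3 s_3 w_2 s_4$ with each $s_i \in B(F) \sqcup B(F)w_1 N_1(F)$, and absorbing the unipotent parts into the right-hand $N$ factor, the Weyl component of $g$ falls into $W_S \cdot w_2 W_S w_3 W_S w_2 \cdot W_S$, where $W_S = \{1, w_1\}$. Directly enumerating the four inner products $w_2\epsilon_1 w_3 \epsilon_2 w_2$ with $\epsilon_i \in W_S$, and using $w_1 w_3 = w_3 w_1$ together with $w_1^2 = 1$, collapses the list to just two distinct middle elements: $w_2w_3w_2$ and $w_2w_1w_3w_2$ (the extreme case $w_2w_1w_3w_1w_2$ reduces to $w_2w_3w_2$ after cancellation). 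Multiplying on the right by $W_S$ produces the four base elements $w_2w_3w_2$, $w_2w_3w_2w_1$, $w_2w_1w_3w_2$, $w_2w_1w_3w_2w_1$ of \eqref{0018}; multiplying further on the left by $W_S$ is precisely the $B(F)\backslash S(F)$-conjugation. The reverse inclusion is immediate by exhibiting each base element as a product of the required form.

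Next I would verify the transversality claim. Disjointness of the four Bruhat cells $Bw_2w_3w_2N$, $Bw_2w_3w_2w_1N$, $Bw_2w_1w_3w_2N$, $Bw_2w_1w_3w_2w_1N$ is automatic, since the four Weyl elements have Coxeter lengths $3, 4, 4, 5$ and the two length-$4$ elements are genuinely distinct (as permutations of $\{1,2,3,4\}$). For transversality under conjugation by $B(F)\backslash S(F) = \{1\} \sqcup w_1 N_1(F)$, I would apply formula \eqref{20} to each of the four cases, compute $C(w)^{w_1}$, and verify that the resulting cell (which lies in a Bruhat cell of strictly larger length) does not intersect $C(w')$ for any other $w'$ in the list. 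This is a finite case-by-case check modeled exactly on the argument in Claim \ref{31}, where the disjointness of Bruhat cells is again the fundamental tool.

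The main obstacle is the bookkeeping in the first step: one must carefully enumerate all of the products $s_1 w_2 s_2 w_3 s_3 w_2 s_4$, apply the commutation $w_1w_3 = w_3w_1$ and braid relations in the correct order, and distinguish reductions that collapse to shorter Weyl elements from those that genuinely introduce new ones. The key cancellation $w_2 w_1 w_3 w_1 w_2 = w_2 w_3 w_2$ is what keeps the list of middle Weyl elements to exactly two and produces the four cells in \eqref{0018} rather than a larger family.
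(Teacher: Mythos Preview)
Your enumeration of the eight Bruhat cells comprising $Sw_2Sw_3Sw_2S$ is correct, and the cancellation $w_2 w_1 w_3 w_1 w_2 = w_2 w_3 w_2$ is indeed the key Weyl-group identity. The gap is in the assertion that ``multiplying further on the left by $W_S$ is precisely the $B(F)\backslash S(F)$-conjugation.'' Left-translation by $w_1$ sends a Weyl element $w$ to $w_1 w$, whereas conjugation by an element of $w_1 N_1$ sends $C(w)$ into (a subset of) $C(w_1)C(w)C(w_1)$; by \eqref{20} this need not contain $C(w_1 w)$ at all --- for instance conjugating $C(w_2w_3w_2)$ by $w_1$ lands in $C(w_1 w_2 w_3 w_2 w_1)$, not $C(w_1 w_2 w_3 w_2)$. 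It happens that the four base Weyl elements are stable under right-multiplication by $w_1$, so at the level of \emph{Weyl elements} the two operations do produce the same eight-element set; but this does not yet show that every \emph{element} of the four ``other'' cells $C(w_1 w_2 w_3 w_2)$, $C(w_1 w_2 w_3 w_2 w_1)$, $C(w_1 w_2 w_1 w_3 w_2)$, $C(w_1 w_2 w_1 w_3 w_2 w_1)$ arises as a $B\backslash S$-conjugate of something in a base cell. That surjectivity is the actual content of the lemma. The paper supplies it by writing each such $\gamma$ in Bruhat normal form, isolating the $(1,2)$-entry $a$, and exhibiting the explicit conjugator $\delta = w_1 \left(\begin{smallmatrix} 1 & -a && \\ & 1 && \\ && 1 & \\ &&& 1\end{smallmatrix}\right) \in w_1 N_1$ for which $\delta\gamma\delta^{-1}$ lands in a base cell (with a further dichotomy on whether a certain sum $a+b$ vanishes in two of the four cases).

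For the transversality part, your parenthetical that $C(w)^{w_1}$ ``lies in a Bruhat cell of strictly larger length'' is also not correct in general: \eqref{20} has four cases, and only the first gives a single cell of length $l(w)+2$; in the others the product $C(s_\alpha)C(w)C(s_\alpha)$ contains cells of length $\le l(w)$. For example $C(w_2w_3w_2w_1)^{w_1}$ meets $C(w_2w_3w_2)$, which \emph{is} one of the four base cells, so the non-intersection check as you describe it would fail. One must argue more carefully that the conjugation map (as opposed to the full product of cells) does not produce such overlaps, which again comes down to tracking Bruhat normal forms rather than just Weyl-group combinatorics.
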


\begin{proof}
	Since $S=B\sqcup Bw_1N,$ we have $Sw_2Sw_3Sw_2S=Bw_2w_3w_2N\sqcup Bw_1w_2w_3w_2N\sqcup Bw_2w_3w_2w_1\sqcup Bw_1w_2w_3w_2w_1N\sqcup Bw_2w_1w_3w_2N\sqcup Bw_2w_1w_3w_2w_1\sqcup Bw_1w_2w_1w_3w_2N\sqcup Bw_1w_2w_1w_3w_2w_1N.$ Noting that $B\backslash S=\{1\}\sqcup w_1N_1,$ by \eqref{20} we deduce that 
	\begin{align*}
	(Bw_2w_3w_2N\sqcup Bw_2w_3w_2w_1N\sqcup Bw_2w_1w_3w_2N\sqcup Bw_2w_1w_3w_2w_1N)^{B\backslash S}.
	\end{align*} 
	is contained in $Sw_2Sw_3Sw_2S.$ Hence, it is sufficient to show that 
	\begin{align*}
	Bw_1w_2w_3w_2N\sqcup Bw_1w_2w_3w_2w_1N\sqcup Bw_1w_2w_1w_3w_2N\sqcup Bw_1w_2w_1w_3w_2w_1N
	\end{align*}
	is contained in $(Bw_2w_3w_2N\sqcup Bw_2w_3w_2w_1\sqcup Bw_2w_1w_3w_2N\sqcup Bw_2w_1w_3w_2w_1)^{B\backslash S}.$
	\begin{enumerate}
		\item[(i).] Let $\gamma\in Bw_1w_2w_3w_2N.$ Then one can write 
		\begin{align*}
		\gamma=\begin{pmatrix}
		1&a &*&*\\
		& 1&*&*\\
		&&1&*\\
		&&&1
		\end{pmatrix}\begin{pmatrix}
		t_1& &&\\
		& t_2&&\\
		&&t_3&\\
		&&&t_4
		\end{pmatrix}w_1w_2w_3w_2\begin{pmatrix}
		1 &&&*\\
		& 1&*&*\\
		&&1&*\\
		&&&1
		\end{pmatrix}.
		\end{align*}
		
		Let $\delta=w_1\begin{pmatrix}
		1 &-a\\
		& 1&\\
		&&I_2
		\end{pmatrix}\in w_1N_1.$Then $\delta\gamma\delta^{-1}\in Bw_2w_3w_2w_1N.$ Hence 
		\begin{equation}\label{0014}
		Bw_1w_2w_3w_2N\subseteq (Bw_2w_3w_2w_1N)^{B\backslash S}.
		\end{equation}
		
		\item[(ii).] Let $\gamma\in Bw_1w_2w_3w_2w_1N.$ Then one can write 
		\begin{align*}
		\gamma=\begin{pmatrix}
		1&a &*&*\\
		& 1&*&*\\
		&&1&*\\
		&&&1
		\end{pmatrix}\begin{pmatrix}
		t_1& &&\\
		& t_2&&\\
		&&t_3&\\
		&&&t_4
		\end{pmatrix}w_1w_2w_3w_2w_1\begin{pmatrix}
		1 &b&*&*\\
		& 1&&*\\
		&&1&*\\
		&&&1
		\end{pmatrix}.
		\end{align*}
		Let $\delta=w_1\begin{pmatrix}
		1 &-a\\
		& 1&\\
		&&I_2
		\end{pmatrix}\in w_1N_1.$ If $a+b=0,$ then $\delta\gamma\delta^{-1}\in Bw_2w_3w_2N;$ if $a+b\neq 0,$ then $\delta\gamma\delta^{-1}\in Bw_2w_3w_2w_1N.$ In all, we have 
		\begin{equation}\label{0015}
		Bw_1w_2w_3w_2w_1N\subseteq (Bw_2w_3w_2N\sqcup Bw_2w_3w_2w_1N)^{B\backslash S}.
		\end{equation}

		\item[(iii).] Let $\gamma\in Bw_1w_2w_1w_3w_2N.$ Then one can write 
		\begin{align*}
		\gamma=\begin{pmatrix}
		1&a &*&*\\
		& 1&*&*\\
		&&1&*\\
		&&&1
		\end{pmatrix}\begin{pmatrix}
		t_1& &&\\
		& t_2&&\\
		&&t_3&\\
		&&&t_4
		\end{pmatrix}w_1w_2w_1w_3w_2\begin{pmatrix}
		1 &&*&*\\
		& 1&*&*\\
		&&1&*\\
		&&&1
		\end{pmatrix}.
		\end{align*}
		
		Let $\delta=w_1\begin{pmatrix}
		1 &-a\\
		& 1&\\
		&&I_2
		\end{pmatrix}.$ Then $\delta\gamma\delta^{-1}\in Bw_2w_1w_3w_2w_1N;$ namely,
		\begin{equation}\label{0016}
		Bw_1w_2w_3w_2w_1N\subseteq (Bw_2w_1w_3w_2w_1N)^{B\backslash S}.
		\end{equation}
		
		\item[(iv).] Let $\gamma\in Bw_1w_2w_1w_3w_2w_1N.$ Then one can write 
		\begin{align*}
		\gamma=\begin{pmatrix}
		1&a &*&*\\
		& 1&*&*\\
		&&1&*\\
		&&&1
		\end{pmatrix}\begin{pmatrix}
		t_1& &&\\
		& t_2&&\\
		&&t_3&\\
		&&&t_4
		\end{pmatrix}w_1w_2w_1w_3w_2w_1\begin{pmatrix}
		1 &b&*&*\\
		& 1&*&*\\
		&&1&*\\
		&&&1
		\end{pmatrix}.
		\end{align*}
		
		Let $\delta=w_1\begin{pmatrix}
		1 &-a\\
		& 1&\\
		&&I_2
		\end{pmatrix}.$ If $a+b=0,$ then $\delta\gamma\delta^{-1}\in Bw_2w_1w_3w_2N;$ if $a+b\neq 0,$ then $\delta\gamma\delta^{-1}\in Bw_2w_1w_3w_2w_1N.$ In all, we have 
		\begin{equation}\label{0017}
		Bw_1w_2w_3w_2w_1N\subseteq (Bw_2w_1w_3w_2N\sqcup Bw_2w_1w_3w_2w_1N)^{B\backslash S}.
		\end{equation}
	\end{enumerate}
	
	One then deduces from \eqref{0014}, \eqref{0015}, \eqref{0016} and \eqref{0017} that $Sw_2Sw_3Sw_2S$ is equal to \eqref{0018}. Also, by \eqref{20}, supposing 
	\begin{align*}
	\gamma_1, \gamma_2\in Bw_2w_3w_2N\sqcup Bw_2w_3w_2w_1N\sqcup Bw_2w_1w_3w_2N\sqcup Bw_2w_1w_3w_2w_1N,
	\end{align*}
	and $\gamma_1\in \gamma_2^{B\backslash S},$ then $\gamma_1$ and $\gamma_2$ must lie in the same Bruhat cell. However, by uniqueness of Bruhat normal form and \eqref{20}, this cannot happen unless $\gamma_1=\gamma_2.$ Hence, Lemma \ref{28.} follows.
\end{proof}
\medskip 

According to  Lemma \ref{28.}, we can set $\mathcal{B}_1(F)=(Bw_2w_3w_2N)^{B(F)\backslash S(F)},$ $\mathcal{B}_2(F)=(Bw_2w_3w_2w_1N)^{B(F)\backslash S(F)},$ $\mathcal{B}_3(F)=(Bw_2w_1w_3w_2N)^{B(F)\backslash S(F)},$ and let $\mathcal{B}_4(F)=(Bw_2w_1w_3w_2w_1N)^{B(F)\backslash S(F)}.$ Then we obtain a refined decomposition  $\K_{\infty,2}^{(3)}(x,x)=\K_{\infty,2;1}^{(3)}(x,x)+\K_{\infty,2;2}^{(3)}(x,x)+\K_{\infty,2;3}^{(3)}(x,x)+\K_{\infty,2;4}^{(3)}(x,x),$ where 
\begin{align*}
&\K_{\infty,2;k}^{(3)}(x,x)=\sum_{\delta\in R_2(F)\backslash R_3(F)}\int_{[\widetilde{V}_3']}\int_{[\widetilde{V}_3]}\sum_{\gamma\in  \mathcal{B}_{k,0}(F)}\varphi(x^{-1}\delta^{-1}u^{-1}v^{-1}\gamma \delta x)du\theta(v)dv,
\end{align*}
where $\mathcal{B}_{k,0}(F)=Z_G(F)\backslash \mathcal{B}_{k,0}(F),$ and $1\leq k\leq 4.$ Let $B_2$ be the group consisting of nonsingular $4\times 4$ matrix of the form $\begin{pmatrix}
* &*&*&*\\
& *&*&*\\
&&1&*\\
&&&1
\end{pmatrix}.$ Then 
\begin{align*}
\K_{\infty,2;k}^{(3)}(x,x)=\sum_{\delta\in B_2(F)\backslash R_3(F)}\int_{[\widetilde{V}_3']}\int_{[\widetilde{V}_3]}\sum_{\gamma\in  \mathcal{B}_{k,0}^*(F)}\varphi(x^{-1}\delta^{-1}u^{-1}v^{-1}\gamma \delta x)du\theta(v)dv,
\end{align*}
where $\mathcal{B}_{1,0}^*(F)=B_0w_2w_3w_2N,$ $\mathcal{B}_{2,0}^*(F)=B_0w_2w_3w_2w_1N,$ $\mathcal{B}_{3,0}^*(F)=B_0w_2w_1w_3w_2N,$ and $\mathcal{B}_{4,0}^*(F)=B_0w_2w_1w_3w_2w_1N.$

\bigskip 

In conjunction with the contribution from $\K_{\Geo,\Sin,2}(x,y),$ we (formally) define 
\begin{align*}
I_{\Sin}^{\mix}(s)=\int_{X}\bigg\{\K_{\Sin,1}^{(1)}(x)+\K_{\Sin,2}(x)-\K_{\infty,1}^{(2)}(x)-\K_{\infty,2;1}^{(3)}(x)\bigg\}\cdot f(x,s)dx,
\end{align*}
where $X=Z_G(\mathbb{A}_F)P_0(F)\backslash G(\mathbb{A}_F).$ Then we have 
\begin{prop}\label{mix}
Let notation be as before. Then $I_{\Sin}^{\mix}(s)=0.$
\end{prop}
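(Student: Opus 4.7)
The plan is to parallel the $\GL(3)$ cancellation worked out in Section \ref{2.2}, where the non-constant-frequency part of the Poisson-summed kernel $\K_{\Sin,1,\neq 0}^{(2,1)}$ was shown, after summing over $R_1(F)\backslash B_0(F)$, to reassemble exactly into $\K_{\infty,1}^{(2)}$ (see \eqref{0022}). For $\GL(4)$ there are two geometric kernels to expand, $\K_{\Geo,\Sin,1}$ (supported on $(S_0w_3N_3)^{S_0\backslash P_0}$) and $\K_{\Geo,\Sin,2}$ (supported on $(B_0w_2w_3N_{23})^{B_0\backslash P_0}$), and two candidate cancelling kernels, $\K_{\infty,1}^{(2)}$ and $\K_{\infty,2;1}^{(3)}$. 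The claim $I_{\Sin}^{\mix}(s)\equiv 0$ should be proved already at the level of kernel functions, prior to integrating against $f(x,s)$, since none of the four contributing integrals converges on its own.

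First I would apply Proposition \ref{Fourier} to the geometric pieces inside $\K_{\Sin,2}(x,x)$ and $\K_{\Sin,1}^{(1)}(x,x)$, Fourier-expanding them at all levels $1\le k\le 4$. Inside $\K_{\Sin,2}$ the $k=4$ level is absorbed by the subtracted $\K_{\infty,\Sin,2}^{(4)}$, so only the levels $k=2,3$ survive; the kernel $\K_{\Sin,1}^{(1)}$ is already a Whittaker average at level $k=2$, and only its deeper expansion needs to be resolved. Next I would apply Poisson summation to the free unipotent summations $N_{23}(F)\simeq F^2$ and $N_3(F)\simeq F$ appearing in the respective orbit representatives, producing, in each case, a zero-frequency term (the corresponding integral over $N_{23}(\mathbb{A}_F)$ or $N_3(\mathbb{A}_F)$ without the discrete sum) plus a dual sum over non-zero Fourier characters indexed by $F^\times$.

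The crucial identification step is to recognize the zero-frequency Poisson contributions as $\K_{\infty,2;1}^{(3)}$, invoking Lemma \ref{28.} to reparametrize $(Bw_2w_3w_2N)^{B\backslash S}$, and to recognize the non-zero-frequency contributions, after absorbing the dual character into the torus coordinate and then passing to the full coset sum over $R_1(F)\backslash P_0(F)$, as $\K_{\infty,1}^{(2)}$. Here Lemmas \ref{21}, \ref{22}, \ref{030} together with the Bruhat calculus in \eqref{20} supply the combinatorial input that lets one track which cell of $\mathcal{B}_1^{(2)}(F)=C(w_2w_3)\sqcup C(w_3w_2)\sqcup C(w_2w_3w_2)$ each term lands in after the Weyl-conjugation shifts induced by Poisson duality. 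With these identifications in place, the four kernels inside the bracket cancel identically as sums of Schwartz-function orbital integrals.

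The principal obstacle will be the combinatorial bookkeeping of Bruhat cells. As in the $\GL(3)$ analysis, where the orbital function $h(t_0)$ was forced to vanish unless $t_2=t_1^2$, certain torus constraints should eliminate nominal contributions, and one must verify that the surviving terms match the target kernels cell-by-cell, including the precise way the Whittaker character $\theta$ lines up on both sides. A secondary technical care is that, because each of $I_{\Sin,1}^{(1)}(s)$, $I_{\Sin,2}(s)$, $I_{\infty,1}^{(2)}(s)$, $I_{\infty,2;1}^{(3)}(s)$ diverges as a separate integral, the Fourier and Poisson manipulations must be performed on the kernel functions \emph{before} integration; only once the pointwise identity of kernels is established can one integrate against $f(x,s)$ over $X$ to conclude $I_{\Sin}^{\mix}(s)=0$.
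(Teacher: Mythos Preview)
Your plan is correct in spirit and uses the same ingredients as the paper (Proposition~\ref{Fourier}, Poisson summation, and Bruhat combinatorics), but the paper organizes the argument differently. Rather than expanding $\K_{\Sin,1}^{(1)}$ and $\K_{\Sin,2}$ separately and then matching their Poisson pieces cell-by-cell against $\K_{\infty,1}^{(2)}$ and $\K_{\infty,2;1}^{(3)}$, the paper introduces a single auxiliary kernel $\Delta_{\Phi}(x;y)$ built from the set $\Phi=B_0w_3N\sqcup B_0w_2w_3N\sqcup B_0w_3w_2N\sqcup B_0w_2w_3w_2N$, together with companion kernels $\Upsilon_\infty^{(2)},\Upsilon_\infty^{(3)},\Upsilon_\infty^{(4)}$. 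One Bruhat-compatibility observation shows that the original integrand $\K_{\Sin,1}^{(1)}+\K_{\Sin,2}-\K_{\infty,1}^{(2)}-\K_{\infty,2;1}^{(3)}$ equals $\K_{\Sin,1}^{(1)}+\K_{\Sin,2}+\Upsilon_\infty^{(4)}-\Delta_\Phi(\cdot;I_4)$, and then Lemma~\ref{37} (proved via Proposition~\ref{Fourier}, Claim~\ref{40}, and a single Poisson summation on the $B_0w_3N$-part) gives $\Delta_\Phi(x;I_4)=\K_{\Sin,1}^{(1)}(x)+\K_{\Sin,2}(x)+\Upsilon_\infty^{(4)}(x)$ as a pointwise kernel identity. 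What this buys is that all the Bruhat bookkeeping is concentrated in the single set $\Phi$ and a single Poisson step, rather than being distributed across two geometric kernels and three target cells $C(w_2w_3),C(w_3w_2),C(w_2w_3w_2)$ as in your direct approach; your route would work but with more case analysis. One small correction: your expectation that zero-frequency terms match $\K_{\infty,2;1}^{(3)}$ and non-zero-frequency terms match $\K_{\infty,1}^{(2)}$ is not quite the split the paper produces---in Lemma~\ref{37} the Poisson step recovers $\K_{\Sin,1}^{(1)}$ itself as the non-zero-frequency part of $\Delta_*^{(1)}$, i.e.\ the argument runs in the reverse direction to yours.
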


Let $\Phi=B_0w_3N\sqcup B_0w_2w_3N\sqcup B_0w_3w_2N\sqcup B_0w_2w_3w_2N.$ Let $Q$ be the standard parabolic subgroup of $\GL(4)$ of type $(1,3).$ Denote by $N_Q$ the unipotent of $Q.$ Let $H$ be the standard parabolic subgroup of $\GL(4)$ of type $(1,2,1).$ Set $H_0=Z_G\backslash H.$ Let
\begin{align*}
\Upsilon_{\infty}^{(2)}(x)&=\sum_{\delta\in R_{1}(F)\backslash R_{3}(F)}\int_{[V_2']}\int_{[V_2]}\sum_{\gamma\in \Phi}\varphi((uu'\delta x)^{-1}\gamma \delta x)du\theta(u')du';\\
\Upsilon_{\infty}^{(3)}(x)&=\sum_{\delta\in B_2(F)\backslash R_3(F)}\int_{[\widetilde{V}_3']}\int_{[\widetilde{V}_3]}\sum_{\gamma\in  \Phi-B_0w_3N}\varphi(x^{-1}\delta^{-1}u^{-1}v^{-1}\gamma \delta x)du\theta(v)dv;\\
\Upsilon_{\infty}^{(4)}(x)&=\sum_{\delta\in B_2(F)\backslash R_3(F)}\int_{[\widetilde{V}_3']}\int_{[\widetilde{V}_3]}\sum_{\gamma\in  B_0w_3N}\varphi(x^{-1}\delta^{-1}u^{-1}v^{-1}\gamma \delta x)du\theta(v)dv.
\end{align*}

Set $\Delta_{\Phi}(x;y)=\Delta_{\Phi}^{(1)}(x;y)-\Delta_{\Phi}^{(2)}(x;y),$ where for any set $S,$
\begin{align*}
\Delta_{S}^{(1)}(x;y)&=\sum_{\delta\in H_0(F)\backslash R_3(F)}\int_{[N_Q]}\sum_{\gamma\in S}\varphi(x^{-1}\delta^{-1}v^{-1}\gamma y\delta x)dv,\\
\Delta_{S}^{(2)}(x;y)&=\sum_{\delta\in H_0(F)\backslash R_3(F)}\int_{[N_Q]}\int_{[N_R]}\sum_{\gamma\in S}\varphi(x^{-1}\delta^{-1}u^{-1}v^{-1}\gamma y\delta x)dudv.
\end{align*}
\begin{lemma}\label{37}
Let notation be as before. Then 
\begin{equation}\label{36}
\Delta_{\Phi}(x;I_4)=\K_{\Sin,1}^{(1)}(x)+\K_{\Sin,2}(x)+\Upsilon_{\infty}^{(4)}(x).
\end{equation}
\end{lemma}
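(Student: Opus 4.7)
The plan is to establish \eqref{36} by unfolding $\Delta_\Phi^{(1)}(x;I_4)-\Delta_\Phi^{(2)}(x;I_4)$ via Fourier analysis on the abelian quotient $[N_R]$, and then matching each Bruhat cell of $\Phi=B_0w_3N\sqcup B_0w_2w_3N\sqcup B_0w_3w_2N\sqcup B_0w_2w_3w_2N$ to the appropriate term on the right-hand side.

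First I would observe that, since $y=I_4$, the difference $\Delta_\Phi(x;I_4)$ is precisely the projection of $\sum_{\delta,\gamma}\int_{[N_Q]}\varphi(x^{-1}\delta^{-1}v^{-1}\gamma\delta x)\,dv$ onto the orthogonal complement of the trivial character of $[N_R]$. Since $N_R$ is abelian, Poisson summation rewrites this projection as a sum over nontrivial additive characters of $[N_R]$, which may then be re-expressed, via the standard orbit-stabiliser argument for the conjugation action of the relevant Levi factor, as a sum over coset representatives in $B_0(F)\backslash P_0(F)$ (or $S_0(F)\backslash P_0(F)$, depending on the cell) of twisted orbital integrals against characters of the form $\theta(u)$. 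This is the same mechanism that produced $\K_{\Sin,1,\neq 0}^{(2,1)}$ and $\K_{\Sin,1,\neq 0}^{(1,1)}$ in the $\GL(3)$ calculation of Section~\ref{2.2}.

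Next, I would decompose $\Phi$ as above and analyse each Bruhat cell in turn. For $\gamma\in B_0w_3N$, the stabiliser of the corresponding nontrivial character is already large enough that conjugating $w_3$ by representatives of $B\backslash S$ keeps us inside $Sw_3N_3$; reindexing the $\delta$-sum from $H_0(F)\backslash R_3(F)$ down to $B_2(F)\backslash R_3(F)$ then produces exactly $\Upsilon_\infty^{(4)}(x)$. For $\gamma\in B_0w_2w_3N$, Lemma~\ref{22} together with the proof of Claim~\ref{31} identifies the $B_0\backslash P_0$-conjugation orbit with $(S_0w_3N_3)^{S_0\backslash P_0}$, i.e., $\mathcal{A}_1$, and the nontrivial character on $[N_R]$ transfers through conjugation by $w_2$ to the character $\theta(u)$ on $[N_P\backslash N]$, producing $\K_{\Sin,1}^{(1)}(x)$. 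For $\gamma\in B_0w_3w_2N\sqcup B_0w_2w_3w_2N$, the same analysis (again using the normal-form computations of Lemma~\ref{28.}) identifies the orbit with $(B_0w_2w_3N_{23})^{B_0\backslash P_0}=\mathcal{A}_2$, yielding $\K_{\Sin,2}(x)$. The subtracted pieces $\K_{\infty,\Sin,i}^{(4)}$ appearing inside the definitions of $\K_{\Sin,1}$ and $\K_{\Sin,2}$ correspond exactly to the $N_R$-constant portions that are absorbed into $\Delta_\Phi^{(2)}$ and hence cancelled in the difference.

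The main obstacle is the precise bookkeeping in this second step: one must verify that each Bruhat cell on the left is matched to exactly the correct orbit on the right with the right multiplicity, and that the nontrivial character on $[N_R]$ is transported to precisely the $\theta$ appearing in $\K_{\Sin,1}^{(1)}$ and in the Fourier expansion of $\K_{\Sin,2}$. This hinges on systematic use of the Bruhat normal form machinery from the proof of Lemma~\ref{28.} to track how conjugation by $\delta\in H_0(F)\backslash R_3(F)$ permutes the four Bruhat cells constituting $\Phi$, together with the disjointness of Bruhat cells to rule out any overlap between the contributions to $\Upsilon_\infty^{(4)}$, $\K_{\Sin,1}^{(1)}$ and $\K_{\Sin,2}$.
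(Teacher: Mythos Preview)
Your overall strategy (Poisson/Fourier on the difference $\Delta_\Phi^{(1)}-\Delta_\Phi^{(2)}$, then match cells to the three terms) is the right shape, but the cell-by-cell matching you propose is incorrect, and this is not merely a bookkeeping slip.

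You assert that $B_0w_2w_3N$ produces $\K_{\Sin,1}^{(1)}$ and that $B_0w_3w_2N\sqcup B_0w_2w_3w_2N$ produces $\K_{\Sin,2}$. But look at the definitions: $\K_{\Sin,2}$ is built from $\mathcal A_2(F)=(B_0w_2w_3N_{23})^{B_0\backslash P_0}$, i.e.\ from the $w_2w_3$-cell itself, while $\K_{\Sin,1}^{(1)}$ is a Fourier piece of $\K_{\Sin,1}$, which is built from $\mathcal A_1(F)=(S_0w_3N_3)^{S_0\backslash P_0}$, the $w_3$-orbit. So your assignment has $\K_{\Sin,1}^{(1)}$ and $\K_{\Sin,2}$ swapped at the level of Weyl elements, and the sentence ``Lemma~\ref{22} together with the proof of Claim~\ref{31} identifies the $B_0\backslash P_0$-conjugation orbit of $B_0w_2w_3N$ with $\mathcal A_1$'' is simply false: that orbit is $\mathcal A_2$ by definition. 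Likewise your claim that $B_0w_3N$ alone gives $\Upsilon_\infty^{(4)}$ cannot be right, since $\Upsilon_\infty^{(4)}$ and $\K_{\Sin,1}^{(1)}$ both arise from the $w_3$-type contribution.

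The paper's argument does not attempt a direct four-cell matching. Instead it first proves (Claim~\ref{40}) that $\Phi=(B_0w_2w_3N\sqcup B_0w_3N)^{B_0\backslash H_0}$, which lets one re-index the $\delta$-sum from $H_0\backslash R_3$ to $B_0\backslash R_3$ while shrinking the $\gamma$-sum to just the two base cells. The $w_2w_3$ base cell then gives $\K_{\Sin,2}$ directly (this is where the subtraction of $\K_{\infty,\Sin,2}^{(4)}$ matches $\Delta_\Phi^{(2)}$, as you anticipated). The $w_3$ base cell gives $\Delta_*^{(1)}-\Delta_*^{(2)}$, and a single Poisson summation on this piece yields \emph{both} $\K_{\Sin,1}^{(1)}$ and $\Upsilon_\infty^{(4)}$ simultaneously (equation~\eqref{34}); they are not attached to separate cells of $\Phi$. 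The regrouping via Claim~\ref{40} is the organizing principle your sketch is missing.
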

\begin{proof}
For fixed $x,$ the function $\Delta_{\Phi}(x;y)$ is a Schwartz function with respect to $y\in H_0(F)\backslash Q(\mathbb{A}_F).$ Thus we can apply Proposition \ref{Fourier} to $\Delta_{\Phi}(x;y)$ and evaluate at $y=I_4$ to obtain $\Delta_{\Phi}(x;I_4)=\Upsilon_{\infty}^{(2)}(x)+\Upsilon_{\infty}^{(3)}(x)+\Upsilon_{\infty}^{(4)}(x).$ 

\begin{claim}\label{40}
	Let notation be as before. Then 
	\begin{equation}\label{33}
	\Phi=(B_0(F)w_2w_3N(F)\sqcup B_0(F)w_3N(F))^{B_0(F)\backslash H_0(F)}.
	\end{equation}
\end{claim}

Since Claim \ref{40} follows from the proof of Lemma \ref{22}, we thus omit the proof. Then by \eqref{33} we conclude that 
\begin{equation}\label{35}
\Delta_{\Phi}(x;I_4)-\K_{\Sin,2}(x)=\Delta_{*}^{(1)}(x;I_4)-\Delta_{*}^{(2)}(x;I_4),
\end{equation}
where $\Delta_{*}^{(k)}(x;I_4)=\Delta_{(B_0(F)w_3N(F))^{B_0(F)\backslash H_0(F)}}^{(k)}(x;I_4),$ $1\leq k\leq 2.$ Explicitly, 
\begin{align*}
\Delta_{*}^{(1)}(x;I_4)&=\sum_{\delta\in B_0(F)\backslash R_3(F)}\int_{[N_Q]}\sum_{\gamma\in B_0(F)w_3N(F)}\varphi(x^{-1}\delta^{-1}v^{-1}\gamma \delta x)dv,\\
\Delta_{*}^{(2)}(x;I_4)&=\sum_{\delta\in B_0(F)\backslash R_3(F)}\int_{[N_Q]}\int_{[N_R]}\sum_{\gamma\in B_0(F)w_3N(F)}\varphi(x^{-1}\delta^{-1}u^{-1}v^{-1}\gamma \delta x)dudv.
\end{align*}

It then follows form Poisson summation that 
\begin{equation}\label{34}
\Delta_{*}^{(1)}(x;I_4)=\K_{\Sin,1}^{(1)}(x)+\Upsilon_{\infty}^{(4)}(x)+\Delta_{*}^{(2)}(x;I_4).
\end{equation}
Hence, \eqref{36} follows from \eqref{35} and \eqref{34}. 
\end{proof}

\begin{proof}[Proof of Proposition \ref{mix}]
Considering the compatibility of Bruhat normal forms and the generic character, we have
\begin{align*}
I_{\Sin}^{\mix}(s)=\int_{X}\bigg\{\K_{\Sin,1}^{(1)}(x)+\K_{\Sin,2}(x)+\Upsilon_{\infty}^{(4)}(x)-\Delta_{\Phi}(x;I_4)\bigg\}\cdot f(x,s)dx.
\end{align*}
Then $I_{\Sin}^{\mix}(s)=0,$ as a consequence of Lemma \ref{37}.
\end{proof}

\subsubsection{Contributions from $\K_{\infty,2;k}^{(3)}$} 
Let notation be as before. Let $2\leq k\leq 4.$ Define the distribution by 
\begin{align*}
I_{\infty,2;k}^{(3)}(s)=\int_{Z_G(\mathbb{A}_F)B_2(F)\backslash G(\mathbb{A}_F)}\K_{\infty,2;k}^{(3)}(x,x)f(x,s)dx.
\end{align*}
Write $X=Z_G(\mathbb{A}_F)B_2(F)\backslash G(\mathbb{A}_F).$ Then explicitly we have 
\begin{align*}
I_{\infty,2;k}^{(3)}(s)=\int_{X}\int_{[\widetilde{V}_3']}\int_{[\widetilde{V}_3]}\sum_{\gamma\in  \mathcal{B}_{k,0}^*(F)}\varphi(x^{-1}u^{-1}v^{-1}\gamma x)du\theta(v)dvf(x,s)dx.
\end{align*}

\begin{prop}\label{002}
	Let notation be as before. Then $I_{\infty,2;2}^{(3)}(s)$ admits a meromorphic continuation to the whole $s$-plane, and 
	\begin{equation}\label{003}
I_{\infty,2;2}^{(3)}(s)\sim \Lambda(s,\tau)\Lambda(4s,\tau^4).
	\end{equation}
	
\end{prop}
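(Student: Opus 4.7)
The plan is to run the same template used for Propositions~\ref{11} and~\ref{12}. First I would parametrize $\gamma \in \mathcal{B}_{2,0}^*(F) = B_0 \widetilde{w} N$, where $\widetilde{w} = w_2 w_3 w_2 w_1$, by its Bruhat normal form $\gamma = u_1 t \widetilde{w} u_2$ with $u_1 \in N(F)$, $t = \diag(t_1,t_2,t_3,1)$ taken modulo $Z_G(F)$, and $u_2 \in N_{\widetilde{w}}(F)$, the unipotent radical attached to $\widetilde{w}$. Because $I_{\infty,2;2}^{(3)}(s)$ converges absolutely on $\Re(s) > 1$, I may interchange sum and integral and write it as $\sum_{t_1,t_2,t_3} I_{\infty,2;2}^{(3)}(s; t_1,t_2,t_3)$ in direct analogy with Proposition~\ref{11}.

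Next I detect the vanishing constraints imposed by the generic character. Picking an element $v \in \widetilde{V}_3(\mathbb{A}_F)$ in a suitably chosen one-parameter subgroup and performing $x \mapsto v x$ inside $I_{\infty,2;2}^{(3)}(s; t_1,t_2,t_3)$, the change of variables produces a character factor of the form $\theta((1 - \alpha(t))\xi)$ for some rational function $\alpha$ in the $t_i$'s determined by how $\widetilde{w}$ moves the simple roots. Integrating the extra variable $\xi$ over $\mathbb{A}_F$ forces $\alpha(t)=1$; repeating this with a second generator of the appropriate unipotent gives a second relation. The resulting two relations cut the free torus variables down to a single coordinate, with the remaining one constrained to lie in a compact subset of $\mathbb{A}_F^\times$ (this last compactness is what accounts for the missing $L$-factor relative to Proposition~\ref{11}).

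Having reduced to a single summation index, I apply Iwasawa $G(\mathbb{A}_F) = N(\mathbb{A}_F) T(\mathbb{A}_F) K$ and unfold the integral. Let me write the outcome schematically as
\begin{align*}
I_{\infty,2;2}^{(3)}(s) = \int_{(\mathbb{A}_F^\times)^2} \int_K \int_{\mathbb{A}_F^{\bullet}} \sum_{t_1} \varphi\bigl(k^{-1} \Lambda_{t_1,y_1,y_2}^{(\bullet)} \widetilde{w} \, \mu^{(\bullet)} k\bigr) \, \theta(\cdots)\, \tau^{?}(y_1)\tau^{?}(y_2) |y_1|^{a_1 s} |y_2|^{a_2 s} f(k,s)\, dn\, d^\times y\, dk,
\end{align*}
with $t_1$ running over a compact set (giving an entire factor $Q_\varphi(s,\tau)$). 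Tate's thesis then identifies the $y_1$-integral against $f(k,s)$ with the Eisenstein integral yielding $\Lambda(4s, \tau^4)$, and the $y_2$-integral with the degree-one $L$-factor $\Lambda(s, \tau)$. This gives the meromorphic continuation to the whole $s$-plane together with the claimed asymptotic $I_{\infty,2;2}^{(3)}(s) \sim \Lambda(s,\tau)\Lambda(4s,\tau^4)$.

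The main obstacle is the bookkeeping of the Bruhat conjugation: one has to compute precisely how $\widetilde{w}$ and the integration variables $u \in \widetilde{V}_3$, $v \in \widetilde{V}_3'$ interact with the Bruhat representative $u_1 t \widetilde{w} u_2$, and to read off the correct characters $\alpha(t)$ in the $t_i$'s. Once these relations are identified, compactness of the surviving torus coordinate follows from support properties of $\varphi$ in the same way as the corresponding step in Proposition~\ref{11}, and the rest is a standard application of Tate's thesis. No new analytic input beyond what is used in Propositions~\ref{11} and~\ref{12} is required.
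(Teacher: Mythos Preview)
Your overall strategy---Bruhat parametrization, Iwasawa decomposition, then Tate's thesis---is correct and does lead to the conclusion $I_{\infty,2;2}^{(3)}(s)\sim \Lambda(s,\tau)\Lambda(4s,\tau^4)$. However, the mechanism you propose for obtaining compactness differs from what the paper actually does, and your claimed ``two relations'' step is not verified.

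You propose to imitate Proposition~\ref{11}: perform changes of variable $x\mapsto vx$ to extract character factors $\theta((1-\alpha(t))\xi)$ and thereby force two relations among the rational torus entries $t_1,t_2,t_3$. The paper does \emph{not} do this for Proposition~\ref{002}. Instead, it exploits the specific structure of $\widetilde w=w_2w_3w_2w_1$: as a permutation this is the $3$-cycle $(1\,2\,4)$, which \emph{fixes} position~$3$. The paper records the torus conjugation identity
\[
\diag(y_1^{-1},y_2^{-1},y_3^{-1},1)\,\widetilde w\,\diag(y_1,y_2,y_3,1)
= y_2^{-1}\,\widetilde w\,\diag\bigl(y_1y_2,\;y_2^2y_1^{-1},\;y_2,\;1\bigr),
\]
from which one sees that after Iwasawa the adelic variable $y_3$ drops out of the matrix argument of $\varphi$ entirely and survives only in the factors $\theta(y_3 g)\theta(y_3 g')\tau(y_3)|y_3|^s$. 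The $y_3$-integral is then a pure Tate integral for $\Lambda(s,\tau)$. Compactness of the remaining variables is read off directly from the support of $\varphi$: the diagonal entries $y_1y_2$, $y_2^2y_1^{-1}t_2$, $y_2$, $1$ must lie in a compact set (no scaling allowed since the last entry is $1$), forcing $y_2$ and $y_1y_2$---hence $y_1$---into compact subsets of $\mathbb A_F^\times$, and making the sum over $t_2\in F^\times$ finite. No character-vanishing constraint on the $t_i$ is needed at all.

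Your route may well work, but you would have to check carefully that two independent one-parameter subgroups in the relevant unipotent actually yield two nontrivial relations for this particular $\widetilde w$; in Proposition~\ref{11} only one such relation was obtained, and the shape of $\widetilde w$ here is different. The paper's approach sidesteps this entirely and is shorter: the fixed point of $\widetilde w$ does the work that your hypothetical second relation would do.
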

\begin{proof}
Let $\widetilde{w}=w_2w_3w_2w_1.$ For any $\gamma\in\mathcal{B}_{2,0}^*(F),$ we can write $\gamma$ uniquely as $\gamma=u_1tu_2,$ where $u_1\in N(F),$ $t=\diag(t_1,t_2,t_3,1)$ and $u_2\in N_{\widetilde{w}}(F).$ Note that 
\begin{align*}
\begin{pmatrix}
y_1^{-1}&&&\\
&y_2^{-1}&&\\
&&y_3^{-1}&\\
&&&1
\end{pmatrix}\widetilde{w}\begin{pmatrix}
y_1&&&\\
&y_2&&\\
&&y_3&\\
&&&1
\end{pmatrix}=y_2^{-1}\widetilde{w}\begin{pmatrix}
y_1y_2&&&\\
&y_2^2y_1^{-1}&&\\
&&y_2&\\
&&&1
\end{pmatrix}
\end{align*}
Then 
\begin{align*}
I_{\infty,2;2}^{(3)}(s)=&\int_{(\mathbb{A}_F^{\times})^3}\int_{K}\sum_{t_2\in F^{\times}}\int_{\mathbb{A}_F^{11}}\varphi\left(k^{-1}\lambda_{t_2,y_1,y_2,e,g}^{a,b,c,e,f}\cdot \begin{pmatrix}
1&a'&b'&c'\\
&1&&\\
&&1&g'\\
&&&1
\end{pmatrix}k\right)\theta(y_3 g)\\
&\quad\theta(y_3 g')\tau(y_1)\tau\omega(y_2)\tau(y_3)|y_1|^{s+3}|y_2|^{s-1}|y_3|^{s}f(k,s)dndkd^{\times}y_1 d^{\times}y_2 d^{\times}y_3,
\end{align*}
where $\widetilde{w}=w_1w_2w_3w_2w_1,$ $dn=dadb\cdots dg da'\cdots dg';$ and
\begin{align*}
\lambda_{t_2,y_1,y_2,e,g}^{a,b,c,e,f}=\begin{pmatrix}
1&a&b&c\\
&1&e&f\\
&&1&g\\
&&&1
\end{pmatrix}\widetilde{w}\begin{pmatrix}
y_1y_2 &&\\
&y_2^2y_1^{-1}t_2&&\\
&&y_2&\\
&&&1
\end{pmatrix}.
\end{align*}
Since $y_2$ and $y_1y_2$ runs over compact subsets of $\mathbb{A}_{F}^{\times},$ $y_1$ runs over some compact subset as well. We then conclude \eqref{003} from Tate's thesis.
\end{proof}

\begin{prop}\label{000}
	Let notation be as before. Then $I_{\infty,2;3}^{(3)}(s)\cdot \Lambda(s,\tau)^{-1}$ admits a holomorphic continuation when $\Re(s)>0$ and $s\neq 1.$ 
	
\end{prop}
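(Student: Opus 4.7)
The plan is to follow the template of Propositions \ref{11}, \ref{12}, and \ref{002}. First I would write each element of $\mathcal{B}_{3,0}^*(F) = B_0(F) w_2 w_1 w_3 w_2 N(F)$ uniquely in Bruhat normal form $\gamma = u_1 \cdot t \cdot \tilde{w} \cdot u_2$, where $\tilde{w} := w_2 w_1 w_3 w_2$, $u_1 \in N(F)$, $t = \diag(t_1, t_2, t_3, 1) \bmod Z_G(F)$, and $u_2$ runs over the unipotent $N_{\tilde{w}}(F)$ attached to the inversion set of $\tilde{w}$. A direct computation shows $\tilde{w}$ realizes the permutation $\sigma = (1,3)(2,4)$, so
$$\tilde{w}^{-1}\,\diag(a_1, a_2, a_3, 1)\,\tilde{w} \;=\; \diag(a_3, 1, a_1, a_2),$$
and the four inversions of $\sigma$ form the full upper-right $2 \times 2$ block. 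Consequently $N_{\tilde{w}}$ has the shape $\begin{pmatrix} I_2 & Y \\ 0 & I_2 \end{pmatrix}$ with $Y \in M_{2\times 2}$, mirroring the long Weyl element attached to the $(2,2)$ partition.

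Next I would apply the Iwasawa decomposition $x = nak$ to unfold the integral over $Z_G(\mathbb{A}_F) B_2(F) \backslash G(\mathbb{A}_F)$. Since $B_2$ already contains arbitrary torus entries in positions $(1,1)$ and $(2,2)$, after conjugation by $\tilde{w}$ the first two diagonal entries of $\diag(a_3/a_2, 1/a_2, a_1/a_2, 1)$ may be absorbed into $B_2(F)$, leaving a single free torus variable $y := a_1 a_2^{-1}$. Tate-type integration of the $u_1$ variable against this torus with the character $\tau$ then contributes exactly the factor $\Lambda(s, \tau)$. The remaining torus-summation parameters $(t_1, t_2, t_3)$ together with the leftover Iwasawa pieces are pinned down to compact subsets by the Whittaker characters $\theta$ on $[\tilde{V}_3]$, via the standard $\theta\bigl((1-\text{torus ratio})\cdot v\bigr) \equiv 1$ mechanism used in the proofs of Propositions \ref{11} and \ref{002}; this yields entire (compactly supported) contributions.

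After extracting $\Lambda(s, \tau)$ and the entire pieces above, the residual integration is over the $2\times 2$ block $Y \in N_{\tilde{w}}(\mathbb{A}_F)$ twisted by the remaining Whittaker character and paired with the Eisenstein section $f(k, s)$. A change of variables analogous to \eqref{0003} in the proof of Proposition \ref{12} identifies this residual integration with the geometric side of Jacquet--Zagier's work \cite{JZ87} on $\GL(2)$. The outcome is an expression of the form
$$I_{\infty, 2;3}^{(3)}(s) \;=\; \Lambda(s, \tau) \cdot G(s),$$
where $G(s)$ is a finite sum of products of entire factors with $\GL(2)$ Jacquet--Zagier expressions. By \cite{JZ87} these are meromorphic on $\mathbb{C}$ with, in the half-plane $\Re(s) > 0$, only a possible simple pole at $s = 1$. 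Dividing by $\Lambda(s, \tau)$ then yields the claim.

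The main obstacle is the combinatorial bookkeeping for the length-$4$ Weyl element $\tilde{w}$: one must carefully sort which of the torus parameters $(a_1, a_2, a_3; t_1, t_2, t_3)$ are forced into compact support by the Whittaker characters, which single combination produces the free Tate factor $\Lambda(s, \tau)$, and which combination reduces to a $\GL(2)$ Jacquet--Zagier computation. The $2\times 2$ block structure of $N_{\tilde{w}}$ is what opens the door to the $\GL(2)$ reduction, while matching the divergences of individual pieces against the $B_2$-absorption cancellations demands the same care exercised in Propositions \ref{11}, \ref{12}, and \ref{002}.
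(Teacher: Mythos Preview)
Your overall direction—unfolding via Iwasawa and reducing to Jacquet--Zagier on $\GL(2)$—matches the paper, and your identification of $\tilde{w}=w_2w_1w_3w_2$ with the permutation $(13)(24)$ and of $N_{\tilde w}$ with the upper-right $2\times 2$ block is correct. However, the execution contains a genuine gap.

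You claim that after absorbing two torus coordinates into $B_2(F)$ one is left with a \emph{single} free torus variable producing a Tate factor $\Lambda(s,\tau)$, while the remaining torus-summation parameters are all pinned to compact sets by the Whittaker characters. This is too optimistic. The only generic character present in the definition of $\K_{\infty,2;3}^{(3)}$ is $\theta(v)$ for $v\in[\widetilde V_3']$, which after Iwasawa contributes characters of the shape $\theta(a_1 g)\theta(a_1 g')$; this controls \emph{one} torus direction, not three, and there is no Tate integral yielding a bare $\Lambda(s,\tau)$ factor in sight. The divisibility by $\Lambda(s,\tau)$ does not come from a Tate integral here—it emerges only at the end, from the $\GL(2)$ analysis itself.

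More seriously, the paper shows that a single $\GL(2)$ reduction is not enough. After writing the torus piece as in \eqref{53} and performing a further change of variables, the paper factors the diagonal into the block form
\[
a=\diag\bigl(a_2a_3t_1,\;a_2^{-1}a_3^{-1},\;a_1a_2t_2,\;a_1^{-1}a_2^{-1}\bigr),
\]
and then treats the \emph{two} $2\times2$ blocks by \emph{two different} $\GL(2)$ mechanisms: the $(a_3,t_1)$-block via the lemma in \S2.4 of \cite{JZ87} (yielding a finite sum of intertwining operators), and the $(a_2,t_2)$-block via Fourier expansion exactly as in Proposition~\ref{12}. Your single ``residual $2\times2$ block'' reduction collapses these two distinct steps into one, and in particular omits the intertwining-operator piece entirely. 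Without that second $\GL(2)$ input, the argument does not close: the $(a_3,t_1)$-variables are neither compactly supported nor handled by the Proposition~\ref{12} mechanism, so the claimed meromorphic continuation with only a pole at $s=1$ is unjustified.
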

\begin{proof}
Let $X=Z_G(\mathbb{A}_F)B_2(F)\backslash G(\mathbb{A}_F).$ By definition, we have 
\begin{align*}
I_{\infty,2;3}^{(3)}(s)=\int_{X}\int_{[\widetilde{V}_3']}\int_{[\widetilde{V}_3]}\sum_{\gamma\in  \mathcal{B}_{3,0}^*(F)}\varphi(x^{-1}u^{-1}v^{-1}\gamma x)du\theta(v)dvf(x,s)dx.
\end{align*} 
Let $\widetilde{w}=w_2w_3w_1w_2.$ Then by changing of variables we then have 
\begin{align*}
I_{\infty,2;3}^{(3)}(s)=\int_{Z_G(\mathbb{A}_F)N(F)\backslash G(\mathbb{A}_F)}\int_{[\widetilde{V}_3']}\int_{[\widetilde{V}_3]}\sum_{\gamma}\varphi(x^{-1}u^{-1}v^{-1}\gamma x)du\theta(v)dvf(x,s)dx,
\end{align*}
where $\gamma=u_1\widetilde{w}tu_2,$ with $t=\diag(t_1,t_2,1,1)\in \diag(F^{\times}\backslash (F^{\times})^2, F^{\times},1,1);$ and $u_1, u_2\in N_2(F)\backslash N(F).$ Applying Iwasawa decomposition we then obtain 
\begin{align*}
I_{\infty,2;3}^{(3)}(s)=\int_{(\mathbb{A}_F^{\times})^3}\int_{\mathbb{A}_F^{10}}\sum_{t_1\in F^{\times}\backslash (F^{\times})^2}\sum_{t_2\in F^{\times}}\varphi(\cdots)|a_1|^{3s+3}|a_2|^{2s+4}|a_3|^{s+1}\theta(a_1g)\theta(a_1g'),
\end{align*}
where the first ellipsis represents the expression 
\begin{equation}\label{53}
\begin{pmatrix}
1&&b&c\\
&1&d&e\\
&&1&g\\
&&&1
\end{pmatrix}\widetilde{w}\begin{pmatrix}
a_2a_3t_1&&&\\
&a_1a_2t_2&&\\
&&a_2^{-1}a_3^{-1}&\\
&&&a_1^{-1}a_2^{-1}
\end{pmatrix}\begin{pmatrix}
1&&b'&c'\\
&1&d'&e'\\
&&1&g'\\
&&&1
\end{pmatrix}.
\end{equation}

Then we rewrite \eqref{53} and apply a change of variables to see $I_{\infty,2;3}^{(3)}(s)$ becomes
\begin{align*}
\int_{(\mathbb{A}_F^{\times})^3}\int_{\mathbb{A}_F^{10}}\sum_{t_1}\sum_{t_2}\varphi(u_1w_2w_3u_2w_1 au_3w_2u_4)|a_1|^{2s+2}|a_2|^{2s+2}|a_3|^{s+1}\theta(a_1g)\theta(a_1g'),
\end{align*}
where 
\begin{align*}
&u_1=\begin{pmatrix}
1&&&c\\
&1&d&e\\
&&1&g\\
&&&1
\end{pmatrix},\ u_2=\begin{pmatrix}
1&b&&\\
&1&&\\
&&1&\\
&&&1
\end{pmatrix},\ u_3=\begin{pmatrix}
1&b'&&\\
&1&&\\
&&1&\\
&&&1
\end{pmatrix}\\
&a=\begin{pmatrix}
a_2a_3t_1&&&\\
&a_2^{-1}a_3^{-1}&&\\
&&a_1a_2t_2&\\
&&&a_1^{-1}a_2^{-1}
\end{pmatrix},\ u_4=\begin{pmatrix}
1&&&c'\\
&1&d'&e'\\
&&1&g'\\
&&&1
\end{pmatrix}.
\end{align*}

From this expression, the analytic behavior of $I_{\infty,2;3}^{(3)}(s)$ can be detected via Jacquet-Zagier trace formula on $\GL(2).$ The contribution from $a_3$ and $t_1$ can be computed by the lemma in Sec. 2.4 of \cite{JZ87}, and can be further realized as a finite sum of intertwining operators; the contribution from $a_2$ and $t_2$ can be handles via Fourier expansion, the same as Proposition \ref{12}. As a consequence, $I_{\infty,2;3}^{(3)}(s)$ converges absolutely when $\Re(s)>1;$ and $I_{\infty,2;3}^{(3)}(s)\cdot \Lambda(s,\tau)^{-1}$ admits a meromorphic continuation when $\Re(s)>0,$ with the only possible pole at $s=1.$ 
\end{proof}

\begin{prop}\label{0004}
Let notation be as before. Then $I_{\infty,2;4}^{(3)}(s)\cdot \Lambda(s,\tau)^{-1}$ admits a meromorphic continuation to $\Re(s)>0,$ with no pole outside $s=1.$ 
\end{prop}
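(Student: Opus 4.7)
The plan is to follow the same structural strategy as in Proposition \ref{000}, adapted to the longer Weyl element $\widetilde{w} = w_2 w_1 w_3 w_2 w_1$. Since $w_1$ and $w_3$ commute in the Weyl group of $\GL(4)$, we may rewrite $\widetilde{w}$ as $w_2 w_3 w_1 w_2 w_1$, so Bruhat normal forms in $\mathcal{B}_{4,0}^*(F) = B_0 w_2 w_1 w_3 w_2 w_1 N$ are obtained from those appearing in Proposition \ref{000} by appending an extra $w_1$ on the right. Any $\gamma \in \mathcal{B}_{4,0}^*(F)$ can be written uniquely in the form $\gamma = u_1\widetilde{w}t u_2$, with $u_1 \in N(F)$, $t = \diag(t_1,t_2,t_3,1)$ subject to the stabilizer conditions imposed by $\widetilde{w}$, and $u_2 \in N_{\widetilde{w}}(F)$. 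A standard change of variables then converts the outer integration over $Z_G(\mathbb{A}_F)B_2(F)\backslash G(\mathbb{A}_F)$ into an integration over $Z_G(\mathbb{A}_F)N(F)\backslash G(\mathbb{A}_F)$.

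Next I apply Iwasawa decomposition $G(\mathbb{A}_F)=N(\mathbb{A}_F)T(\mathbb{A}_F)K$, unfolding $I_{\infty,2;4}^{(3)}(s)$ as an integral over diagonal coordinates $a_1,a_2,a_3\in \mathbb{A}_F^{\times}$, a $K$-integral, and an integral over unipotent entries with the $\theta$-characters inherited from the Fourier expansion. The $\theta$ pins one entry so that a Tate-type integration in that coordinate extracts a single factor $\Lambda(s,\tau)$; meanwhile, the compatibility of the Bruhat normal form with the additional $w_1$ (analogously to \eqref{53}) forces a second diagonal coordinate to range over a compact subset of $\mathbb{A}_F^{\times}$, contributing only an entire function. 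After dividing by $\Lambda(s,\tau)$, the remaining integral concentrates on a $\GL(2)$-Levi block whose structure matches precisely the shape of the geometric distribution studied in \cite{JZ87}.

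At this point the analysis mirrors the final paragraph of the proof of Proposition \ref{000}: the $t$-summation and the corresponding diagonal integration are treated by the lemma in Section 2.4 of \cite{JZ87}, which rewrites it as a finite sum of intertwining operators acting on an induced representation on $\GL(2,\mathbb{A}_F)$; the last remaining coordinate is handled by Fourier expansion, exactly as in Proposition \ref{12}. The resulting expression then extends meromorphically to $\Re(s)>0$, with the only possible pole a simple pole at $s=1$ inherited from the Dedekind zeta factor $\Lambda_F(s)$ on the geometric side of the Jacquet--Zagier trace formula for $\GL(2)$. The main technical obstacle will be the bookkeeping of the modular characters: the extra $w_1$ factor shifts the exponents of $|a_i|^{\cdot}$ produced by Iwasawa decomposition relative to Proposition \ref{000}, and one must verify that precisely one coordinate remains compactly supported, one contributes $\Lambda(s,\tau)$, and the rest reduces cleanly to the Jacquet--Zagier $\GL(2)$ setup.
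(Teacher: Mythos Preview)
Your overall strategy and your final paragraph match the paper's proof: rewrite $\gamma$ in Bruhat normal form for $\widetilde{w}=w_2w_3w_1w_2w_1$, change variables to integrate over $Z_G(\mathbb{A}_F)N(F)\backslash G(\mathbb{A}_F)$, apply Iwasawa, and reduce to the Jacquet--Zagier $\GL(2)$ trace formula.

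Your middle paragraph, however, mis-describes the mechanism. In the paper's argument for this proposition (as in Proposition~\ref{000}) there is \emph{no} diagonal coordinate that becomes compactly supported, and there is \emph{no} preliminary Tate integration extracting a bare $\Lambda(s,\tau)$; those features belong to Propositions~\ref{11}, \ref{12}, and \ref{002}, not here. After the change of variables the diagonal has the shape $a=\diag(a_2,\,a_2a_3t_1,\,a_2^{-1}a_3^{-1}a_1,\,a_1^{-1}a_2^{-1}t_2)$ with weight $|a_1|^{2s+2}|a_2|^{2s+2}|a_3|^{s+1}$ and characters $\theta(a_1g)\theta(a_1g')$, and the integral breaks into \emph{two} $\GL(2)$-type pieces simultaneously: the $(a_3,t_1)$ block is treated by the intertwining-operator lemma of \cite[\S2.4]{JZ87}, and the $(a_2,t_2)$ block by Fourier expansion as in Proposition~\ref{12}. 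The divisibility by $\Lambda(s,\tau)$ and the location of the only possible pole at $s=1$ then come out of the Jacquet--Zagier analysis itself, not from a separate Tate step followed by a single $\GL(2)$ reduction. Drop the ``one compact coordinate, one $\Lambda(s,\tau)$'' claim and go straight from Iwasawa to the double $\GL(2)$ reduction; the bookkeeping you flag at the end is exactly the substantive content.
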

\begin{proof}
Let $X=Z_G(\mathbb{A}_F)B_2(F)\backslash G(\mathbb{A}_F).$ By definition, we have 
\begin{align*}
I_{\infty,2;4}^{(3)}(s)=\int_{X}\int_{[\widetilde{V}_3']}\int_{[\widetilde{V}_3]}\sum_{\gamma\in  \mathcal{B}_{4,0}^*(F)}\varphi(x^{-1}u^{-1}v^{-1}\gamma x)du\theta(v)dvf(x,s)dx.
\end{align*} 
Let $\widetilde{w}=w_2w_3w_1w_2w_1.$ Then by changing of variables we then have 
\begin{align*}
I_{\infty,2;4}^{(3)}(s)=\int_{Z_G(\mathbb{A}_F)N(F)\backslash G(\mathbb{A}_F)}\int_{[\widetilde{V}_3']}\int_{[\widetilde{V}_3]}\sum_{\gamma}\varphi(x^{-1}u^{-1}v^{-1}\gamma x)du\theta(v)dvf(x,s)dx,
\end{align*}
where $\gamma=u_1\widetilde{w}tu_2,$ with $u_1\in N_2(F)\backslash N(F),$ $u_2\in N(F),$ and $t=\diag(t_1,t_2,1,1)\in \diag(F^{\times}\backslash (F^{\times})^2, F^{\times},1,1).$ Applying Iwasawa decomposition we then obtain 
\begin{align*}
I_{\infty,2;4}^{(3)}(s)=\int_{(\mathbb{A}_F^{\times})^3}\int_{\mathbb{A}_F^{11}}\sum_{t_1\in F^{\times}\backslash (F^{\times})^2}\sum_{t_2\in F^{\times}}\varphi(\cdots)|a_1|^{3s+3}|a_2|^{2s+4}|a_3|^{s+2}\theta(a_1g)\theta(a_1g'),
\end{align*}
where the first ellipsis represents the expression 
\begin{equation}\label{52}
\begin{pmatrix}
1&&b&c\\
&1&d&e\\
&&1&g\\
&&&1
\end{pmatrix}\widetilde{w}\begin{pmatrix}
a_1a_2a_3t_1&&&\\
&a_2t_2&&\\
&&a_2^{-1}a_3^{-1}&\\
&&&a_1^{-1}a_2^{-1}
\end{pmatrix}\begin{pmatrix}
1&a'&b'&c'\\
&1&d'&e'\\
&&1&g'\\
&&&1
\end{pmatrix}.
\end{equation}

Then we rewrite \eqref{52} and apply a change of variables to see $I_{\infty,2;4}^{(3)}(s)$ becomes
\begin{align*}
\int_{(\mathbb{A}_F^{\times})^3}\int_{\mathbb{A}_F^{11}}\sum_{t_1}\sum_{t_2}\varphi(u_1w_2w_3w_1u_2w_2 au_3w_1u_4)|a_1|^{2s+2}|a_2|^{2s+2}|a_3|^{s+1}\theta(a_1g)\theta(a_1g'),
\end{align*}
where 
\begin{align*}
&u_1=\begin{pmatrix}
1&&b&c\\
&1&d&\\
&&1&g\\
&&&1
\end{pmatrix},\ u_2=\begin{pmatrix}
1&d'&&\\
&1&e&\\
&&1&\\
&&&1
\end{pmatrix},\ u_3=\begin{pmatrix}
1&&&\\
&1&b'&\\
&&1&\\
&&&1
\end{pmatrix}\\
&a=\begin{pmatrix}
a_2&&&\\
&a_2a_3t_1&&\\
&&a_2^{-1}a_3^{-1}a_1&\\
&&&a_1^{-1}a_2^{-1}t_2
\end{pmatrix},\ u_4=\begin{pmatrix}
1&a'&&c'\\
&1&&e'\\
&&1&g'\\
&&&1
\end{pmatrix}.
\end{align*}

From this expression, the analytic behavior of $I_{\infty,2;4}^{(3)}(s)$ can be deduced from Jacquet-Zagier trace formula on $\GL(2).$ Precisely, the contribution from $a_3$ and $t_1$ can be computed by the lemma in Sec. 2.4 of \cite{JZ87}, and eventually be realized as a finite sum of intertwining operators; the contribution from $a_2$ and $t_2$ can be handles via Fourier expansion, the same as Proposition \ref{12}. As a consequence, $I_{\infty,2;4}^{(3)}(s)$ converges absolutely when $\Re(s)>1;$ and $I_{\infty,2;4}^{(3)}(s)\cdot \Lambda(s,\tau)^{-1}$ admits a meromorphic continuation when $\Re(s)>0,$ with the only possible pole at $s=1.$ 
\end{proof}

\section{Contributions from Spectral Side}\label{7.3}
In this section, we deal with the generic character distribution $I_{\infty}^{(1)}(s,\tau).$ By Theorem G in \cite{Yang19}, when $\Re(s)>1,$ $I_{\infty}^{(1)}(s,\tau)$ is equal to 
\begin{equation}\label{3}
\sum_{\chi}\sum_{P\in \mathcal{P}}\frac{1}{c_P}\sum_{\phi_1, \phi_2}\int_{\Lambda^*}\langle\mathcal{I}_P(\lambda,\varphi)\phi_2,\phi_1\rangle\int_{Y_G} W_{1}(x;\lambda)\overline{W_{2}(x;\lambda)}f(x,s)dxd\lambda,
\end{equation}
where $Y_G=Z_G(\mathbb{A}_F)N(\mathbb{A}_F)\backslash G(\mathbb{A}_F),$ $\chi$ runs over proper cuspidal data, i.e., $\chi$ is not of the form $\{(G,\pi)\};$ and $\phi_1,$ $\phi_2$ runs over an orthogonal basis $\mathfrak{B}_{P,\chi}$ of the representation space determined by $\chi.$ The sum converges absolutely. Particularly, as a function of $s,$ $I_{\infty}^{(1)}(s)$ is analytic when $\Re(s)>1.$ Moreover, when $\tau^k\neq 1,$ $1\leq k\leq n,$ then Theorem G in loc. cit. and functional equation give meromorphic continuation of $I_{\infty}^{(1)}(s)$ to the whole $s$-plane. 

However, for general $\tau,$ e.g., $\tau=1,$ the continuation of $I_{\infty}^{(1)}(s)$ is rather involved, since the function \eqref{3} is singular at every point on the boundary $\Re(s)=1.$ To continue $I_{\infty}^{(1)}(s)$ meromorphically to the whole plane, we will follow Section 8 in loc. cit., taking advantage of zero-free regions of Rankin-Selberg convolutions and estimates from analytic number theory. 

\subsection{Notation and Zero-free Region}
In this subsection, we introduce some notation used in Section 8 of \cite{Yang19}. Let $\Sigma_F$ be the set of places on $F.$ Recall that we fix the unitary character $\tau.$ Let $\mathcal{D}_{\tau}$ be a standard (open) zero-free region of $L_F(s,\tau)$ (e.g. ref. \cite{Bru06}). We fix such a $\mathcal{D}_{\tau}$ once for all. Let
\begin{equation}\label{R}
\mathcal{R}(1/2;\tau)^-:=\{s\in\mathbb{C}:\ 2s\in \mathcal{D}_{\tau}\}\supsetneq \{s\in\mathbb{C}:\ \Re(s)\geq 1/2\}.
\end{equation}

In Section \ref{3.2}, we will continue $I_{\infty}^{(1)}(s,\tau)$ to the open set $\mathcal{R}(1/2;\tau)^-.$ Invoking \eqref{R} with functional equation we then obtain a meromorphic continuation of $I_{\infty}^{(1)}(s)$ to the whole complex plane.

Let $G=\GL(3)$ or $\GL(4).$ Let $P$ be a standard parabolic subgroup of $G$ of type $(n_1,n_2,\cdots,n_r).$ Let $\mathfrak{X}_P$ be the subset of cuspidal data $\chi=\{(M,\sigma)\}$ such that $M=M_P=\diag(M_1,M_2,\cdots,M_r),$ where $M_i$ is $n_i$ by $n_i$ matrix, $1\leq i\leq r.$ We may write $\sigma=(\sigma_1,\sigma_2,\cdots,\sigma_r),$ where $\sigma_i\in\mathcal{A}_0(M_i(F)\backslash M_i(\mathbb{A}_F)).$ Let $\pi$ be a representation induced from $\chi=\{(M,\sigma)\}.$ 

For any $\boldsymbol{\lambda}=(\lambda_1,\lambda_2,\cdots,\lambda_r)\in i\mathfrak{a}_P^*/i\mathfrak{a}_G^*\simeq (i\mathbb{R})^{r-1},$ satisfying that $\lambda_1+\lambda_2+\cdots+\lambda_r=0,$ we let $\boldsymbol{\kappa}=(\kappa_1,\kappa_2,\cdots,\kappa_{r})\in \mathbb{C}^{r-1}$ be such that 
\begin{equation}\label{145}
\begin{cases}
\kappa_j=\lambda_j-\lambda_{j+1},\ 1\leq j\leq r-1,\\
\kappa_r=\lambda_1-\lambda_r=\kappa_1+\kappa_2+\cdots+\kappa_{r-1}.
\end{cases}
\end{equation}

Then we have a bijection $i\mathfrak{a}_P^*/i\mathfrak{a}_G^*\xleftrightarrow[]{1:1}i\mathfrak{a}_P^*/i\mathfrak{a}_G^*,$ $\boldsymbol{\lambda}\mapsto\boldsymbol{\kappa}$ given by \eqref{145}, which induces a change of coordinates with $d\boldsymbol{\lambda}=m_Pd\boldsymbol{\kappa},$ where $m_P$ is an absolute constant (the determinant of the transform \eqref{145}). So that we can write $\boldsymbol{\lambda}=\boldsymbol{\lambda}(\boldsymbol{\kappa}).$ Let \begin{align*}
R_{\varphi}(s,\lambda;\phi_2)=\sum_{\phi_1\in \mathfrak{B}_{P,\chi}}\langle\mathcal{I}_P(\lambda,\varphi)\phi_1,\phi_2\rangle\cdot\frac{\Psi(s,W_1,W_2;\lambda)}{\Lambda(s,\pi_{\lambda}\otimes\tau\times\widetilde{\pi}_{-\lambda})},\ \Re(s)>1,
\end{align*}
where $\Lambda(s,\pi_{\lambda}\otimes\tau\times\widetilde{\pi}_{-\lambda})$ is the complete $L$-function, defined by $\prod_{v\in\Sigma_F}L_v(s,\pi_{\lambda,v}\otimes\tau_v\times\widetilde{\pi}_{-\lambda,v});$ and $\Psi(s,W_1,W_2;\lambda)=\int_{Y_G} W_{1}(x;\lambda)\overline{W_{2}(x;\lambda)}f(x,s)dxd\lambda$ is the Rankin-Selberg period (see Section 6 of \cite{Yang19} for basic analytic properties).
\medskip

Then we can write $R_{\varphi}(s,\boldsymbol{\lambda};\phi)=R_{\varphi}(s,\boldsymbol{\kappa};\phi)$ and $\Lambda(s,\pi_{\lambda}\otimes\tau\times\widetilde{\pi}_{-\lambda})=\Lambda(s,\pi_{\boldsymbol{\kappa}}\otimes\tau\times\widetilde{\pi}_{-\boldsymbol{\kappa}}).$ Recall that if $v\in \Sigma_{F,fin}$ is a finite place such that $\pi_v$ is unramified and $\Phi_v=\Phi_v^{\circ}$ is the characteristic function of $G(\mathcal{O}_{F,v}).$ Assume further that $\phi_{1,v}=\phi_{2,v}=\phi_v^{\circ}$ be the unique $G(\mathcal{O}_{F,v})$-fixed vector in the space of $\pi_v$ such that $\phi_v^0(e)=1.$ Then $R_v(s,W_{1,v},W_{2,v};\boldsymbol{\lambda})=R_v(s,W_{1,v},W_{2,v};\boldsymbol{\kappa})$ is equal to 
\begin{equation}\label{146}
\prod_{1\leq i<r}\prod_{i< j\leq r}L_v(1+\boldsymbol{\kappa}_{i,j},\sigma_{i,v}\times\widetilde{\sigma}_{j,v})^{-1}\cdot L_v(1-\boldsymbol{\kappa}_{i,j},\widetilde{\sigma}_{i,v}\times\sigma_{j,v})^{-1},
\end{equation}
where $\boldsymbol{\kappa}_{i,j}=\boldsymbol{\kappa}_{i}+\cdots+\boldsymbol{\kappa}_{j-1}.$ By the $K$-finiteness of $\varphi,$ there exists a finite set $S_{\varphi,\tau,\Phi}$ of nonarchimedean places such that for any $\pi$ from some cuspidal datum $\chi\in\mathfrak{X}_P,$ $R_v(s,W_{1,v},W_{2,v};\boldsymbol{\kappa})$ is equal to the formula in \eqref{146}. Then according to Proposition 43 and Proposition 50 in loc. cit., we see that, when $\Re(s)>0,$ $R_v(s,W_{1,v},W_{2,v};\boldsymbol{\kappa})$ is independent of $s$ for all but finitely many places $v.$ Therefore, as a function of $s,$ $R_{\varphi}(s,\boldsymbol{\kappa};\phi)$ is a finite product of holomorphic function in $\Re(s)>0;$ for any given $s$ such that $\Re(s)>0,$ as a complex function of multiple variables with respect to $\boldsymbol{\kappa},$ $R_{\varphi}(s,\boldsymbol{\kappa};\phi)$ has the property that $R_{\varphi}(s,\boldsymbol{\kappa};\phi)L_{S}(\boldsymbol{\kappa},\pi,\widetilde{\pi})$ is holomorphic, where $L_S(\boldsymbol{\kappa},\pi,\widetilde{\pi})$ is denoted by the meromorphic function 
\begin{align*}
\prod_{1\leq i<r}\prod_{i< j\leq r}\prod_{v\in S_{\varphi,\tau,\Phi}}L_v(1+\boldsymbol{\kappa}_{i,j},\sigma_{i,v}\times\widetilde{\sigma}_{j,v})\cdot L_v(1-\boldsymbol{\kappa}_{i,j},\widetilde{\sigma}_{i,v}\times\sigma_{j,v}).
\end{align*}
Hence $R_{\varphi}(s,\boldsymbol{\kappa};\phi)$ is holomorphic in some domain $\mathcal{D}$ if $L_{S}(\boldsymbol{\kappa},\pi,\widetilde{\pi})$ is nonvanishing in $\mathcal{D}.$ Now we are picking up such a zero-free region $\mathcal{D}$ explicitly. 

Let $1\leq m,m'\leq n$ be two integers. Let $\sigma\in\mathcal{A}_0(\GL_m(F)\backslash \GL_{m}(\mathbb{A}_F))$ and $\sigma'\in\mathcal{A}_0(\GL_{m'}(F)\backslash \GL_{m'}(\mathbb{A}_F)).$ Fix $\epsilon_0>0.$ For any $c'>0,$ let $\mathcal{D}_{c'}(\sigma,\sigma')$ be 
\begin{equation}\label{A}
\Bigg\{\kappa=\beta+i\gamma:\ \beta\geq 1-c'\cdot\Big[\frac{(C(\sigma)C(\sigma'))^{-2(m+m')}}{(|\gamma|+3)^{2mm'[F:\mathbb{Q}]}}\Big]^{\frac12+\frac{1}{2(m+m')}-\epsilon_0}\Bigg\},
\end{equation}
if $\sigma'\ncong\widetilde{\sigma};$ and let $\mathcal{D}_{c'}(\sigma,\sigma')$ denote by the region 
\begin{equation}\label{B}
\Bigg\{\kappa=\beta+i\gamma:\ \beta\geq 1-c'\cdot\Big[\frac{(C(\sigma))^{-8m}}{(|\gamma|+3)^{2mm^2[F:\mathbb{Q}]}}\Big]^{-\frac78+\frac{5}{8m}-\epsilon_0}\Bigg\},
\end{equation}
if $\sigma'\simeq \widetilde{\sigma}.$ According to \cite{Bru06} and the Appendix of \cite{Lap13}, there exists a constant $c_{m,m'}>0$ depending only on $m$ and $m',$ such that $L(\boldsymbol{\kappa},\sigma\times\sigma')$ does not vanish in $\boldsymbol{\kappa}=(\kappa_1,\cdots,\kappa_{r})\in \mathcal{D}_{c_{m,m'}}(\sigma,\sigma')\times\cdots\times \mathcal{D}_{c_{m,m'}}(\sigma,\sigma').$ Let $c=\min_{1\leq m,m'\leq n}c_{m,m'}$ and  $\mathcal{C}(\sigma,\sigma')$ be the boundary of $\mathcal{D}_{c}(\sigma,\sigma').$ We may assume that $c$ is small such that the curve $\mathcal{C}(\sigma,\sigma')$ lies in the strip $1-1/(n+4)<\Re(\kappa_j)<1,$ $1\leq j\leq r.$ Fix such a $c$ henceforth. Note that by our choice of $c,$ $L(\boldsymbol{\kappa},\sigma\times\sigma')$ is nonvanishing in $\mathcal{D}_c(\sigma,\sigma')\times \cdots\times \mathcal{D}_c(\sigma,\sigma')$ for any $1\leq m,m'\leq n.$ For $v\in S_{\varphi,\tau,\Phi},$ we have that 
\begin{align*}
\big|L_v(\boldsymbol{\kappa},\sigma_{v}\times{\sigma'}_{v})^{-1}\big|\leq\prod_{i=1}^r\prod_{j=1}^r\left(1+q_v^{1-\frac1{m^2+1}-\frac1{m'^2+1}}\right)^{n_i+n_j}<\infty,
\end{align*}
for any $\boldsymbol{\kappa}$ such that each $\Re(\kappa_j)\geq 0,$ $1\leq j\leq r.$ Let $L_{S}(\boldsymbol{\kappa},\sigma\times\sigma')=L(\boldsymbol{\kappa},\sigma\times\sigma')\prod_{v\in S_{\varphi,\tau,\Phi}}L_v(\boldsymbol{\kappa},\sigma_{v}\times{\sigma'}_{v})^{-1}.$ Then $L_{S}(\boldsymbol{\kappa},\sigma\times\sigma')$ is nonvanishing in $\mathcal{D}_c(\sigma,\sigma')\times \cdots\times \mathcal{D}_c(\sigma,\sigma')$ for any $1\leq m,m'\leq n.$

Let $\chi\in \mathfrak{X}_P$ and $\pi=\Ind_{P(\mathbb{A}_F)}^{G(\mathbb{A}_F)}(\sigma_1,\sigma_2,\cdots,\sigma_r)\in\chi.$ For any $\epsilon\in (0,1]$ we set
\begin{align*}
\mathcal{D}_{\chi}(\epsilon)=\bigcap_{1\leq i\leq r}\bigcap_{i<j\leq r}\Big\{\kappa\in\mathbb{C}:\ \Re(\kappa)\geq0,\ 1-\kappa\in \mathcal{D}_{c\epsilon}(\sigma_i,\sigma_j)\Big\}.
\end{align*}
Also, for $\epsilon=0,$ we set $\mathcal{D}_{\chi}(\epsilon)=\big\{\kappa\in\mathbb{C}:\ \Re(\kappa)\geq0\big\}.$ Then by the above discussion, as a function of $\boldsymbol{\kappa},$ $L_S(\boldsymbol{\kappa},\pi,\widetilde{\pi})$ is nonzero in the region $\mathcal{D}_{\chi}(\boldsymbol{\epsilon})=\big\{\boldsymbol{\kappa}=(\kappa_1,\cdots,\kappa_r)\in\mathbb{C}^r:\ \kappa_l\in \mathcal{D}_{\chi}(\epsilon_l)\big\},$ where $\boldsymbol{\epsilon}=(\epsilon_1,\cdots,\epsilon_r)\in [0,1]^r.$ We can write $\mathcal{D}_{\chi}(\boldsymbol{\epsilon})$ as a product space $\mathcal{D}_{\chi}(\boldsymbol{\epsilon})=\prod_{l=1}^r\mathcal{D}_{\chi}(\epsilon_l),$ and let $\partial\mathcal{D}_{\chi}(\epsilon_l)$ be the boundary of $\mathcal{D}_{\chi}(\epsilon_l).$ Then when $\epsilon_l>0,$ $\partial\mathcal{D}_{\chi}(\epsilon_l)$ has two connected components and one of which is exactly the imaginary axis. Let $\mathcal{C}_{\chi}(\epsilon_l)$ be the other component, which is a continuous curve, where $0\leq\epsilon_l\leq 1.$ When $\epsilon_l=0,$ let $\mathcal{C}_{\chi}(\epsilon_l)$ be the maginary axis. Set $\mathcal{C}_{\chi}(\boldsymbol{\epsilon})=\mathcal{C}_{\chi}(\epsilon_1)\times \cdots\times \mathcal{C}_{\chi}(\epsilon_{r-1}),$ $0\leq \epsilon_l\leq 1,$ $1\leq l\leq r-1.$

Let $\boldsymbol{\epsilon}=(\epsilon_1,\cdots,\epsilon_{r-1})\in [0,1]^{r-1}.$ Then by the above construction, $R_{\varphi}(s,\boldsymbol{\kappa};\phi)$ is holomorphic in $\mathcal{D}_{\chi}(\boldsymbol{\epsilon}).$ Hence $R_{\varphi}(s,\boldsymbol{\kappa};\phi)\Lambda(s,\pi_{\boldsymbol{\kappa}}\otimes\tau\times\widetilde{\pi}_{-\boldsymbol{\kappa}})$ is holomorphic in $\mathcal{D}_{\chi}(\boldsymbol{\epsilon}).$ Moreover,  $L_S(\boldsymbol{\kappa},\pi,\widetilde{\pi})\neq0$ on $\mathcal{C}_{\chi}(\boldsymbol{\epsilon}),$ for any $\boldsymbol{\epsilon}=(\epsilon_1,\cdots,\epsilon_{r-1})\in [0,1]^{r-1}$ and any cuspidal datum $\chi\in \mathfrak{X}_P.$ Let $\Re(s)>1.$ For any $\phi\in \mathfrak{B}_{P,\chi}$ and $\boldsymbol{\epsilon}=(\epsilon_1,\cdots,\epsilon_{r-1})\in [0,1]^{r-1},$ let
\begin{align*}
J_{P,\chi}(s;\phi,\mathcal{C}_{\chi}(\boldsymbol{\epsilon}))=\int_{\mathcal{C}_{\chi}(\boldsymbol{\epsilon})} R_{\varphi}(s,\boldsymbol{\kappa};\phi)\Lambda(s,\pi_{\boldsymbol{\kappa}}\otimes\tau\times\widetilde{\pi}_{-\boldsymbol{\kappa}})d\boldsymbol{\kappa}.
\end{align*}
which is well defined because $J_{P,\chi}(s;\phi,\mathcal{C}_{\chi}(\boldsymbol{\epsilon}))=J_{P,\chi}(s;\phi,\mathcal{C}_{\chi}(\boldsymbol{0}))$ by Cauchy integral formula. Therefore, according to Theorem F in loc. cit.,
\begin{align*}
\sum_{P}\frac1{c_P}\sum_{\chi\in\mathfrak{X}_P}\sum_{\phi\in \mathfrak{B}_{P,\chi}}\int_{\mathcal{C}_{\chi}(\boldsymbol{\epsilon})} \big|R_{\varphi}(s,\boldsymbol{\kappa};\phi)\Lambda(s,\pi_{\boldsymbol{\kappa}}\otimes\tau\times\widetilde{\pi}_{-\boldsymbol{\kappa}})\big|d\boldsymbol{\kappa}<\infty
\end{align*}
for any $\Re(s)>1,$ $\boldsymbol{\epsilon}=(\epsilon_1,\cdots,\epsilon_{r-1})\in [0,1]^{r-1}.$

Let $\boldsymbol{\epsilon}=(\epsilon_1,\cdots,\epsilon_{r-1})\in [0,1]^{r-1}.$ For any $\beta\geq 1/2,$ we denote by 
\begin{equation}\label{149}
\mathcal{R}(\beta;\chi,\boldsymbol{\epsilon})=\Big\{s\in 1+\mathcal{D}_{\chi}(\boldsymbol{\epsilon})\Big\}\bigcup \Big\{s\in 1-\mathcal{D}_{\chi}(\boldsymbol{\epsilon})\Big\}.
\end{equation}
\bigskip

Let $s\in \mathcal{R}(1;\chi,\boldsymbol{\epsilon})$ and $1\leq m\leq r-1.$ Let $j_m, j_{m-1},\cdots,j_1$ be $m$ integers such that $1\leq j_m<\cdots<j_1\leq r-1.$ Consider the distribution:
\begin{align*}
\mathcal{I}_{m,\chi}(s,\tau):=\sum_{\phi\in \mathfrak{B}_{P,\chi}}\int_{\mathcal{C}}\cdots\cdots\int_{\mathcal{C}}\underset{\kappa_{j_m}=s-1}{\Res}\cdots\underset{\kappa_{j_1}=s-1}{\Res}\mathcal{F}(\boldsymbol{\kappa};s)\frac{d\kappa_{r-1}\cdots d\kappa_{1}}{d\kappa_{j_m}\cdots d\kappa_{j_1}},
\end{align*}
where $\mathcal{F}(\boldsymbol{\kappa};s)=\mathcal{F}(\boldsymbol{\kappa};s,P,\chi):=R_{\varphi}(s,\boldsymbol{\kappa};\phi)\Lambda(s,\pi_{\boldsymbol{\kappa}}\otimes\tau\times\widetilde{\pi}_{-\boldsymbol{\kappa}})$. Then each $\mathcal{I}_{m,\chi}(s,\tau)$ is meromorphic in $\mathcal{R}(1;\chi,\boldsymbol{\epsilon})$ with a possible pole at $s=1.$

Let $n\leq 4.$ Let $\chi\in\mathfrak{X}_P.$ Assume that the adjoint L-function $L(s,\sigma,\Ad\otimes \tau)$ is holomorphic inside the strip $0<\Re(s)<1$ for any cuspidal representation  $\sigma\in\mathcal{A}_0\left(\GL(k,\mathbb{A}_F)\right),$ and any $k\leq n-1.$ Then according to Theorem H in loc. cit., for any $0\leq m\leq r-1,$ the function 
	\begin{align*}
	\sum_{\phi\in \mathfrak{B}_{P,\chi}}{\mathcal{I}_{m,\chi}(s)},\quad \ s\in \mathcal{R}(1;\chi,\boldsymbol{\epsilon}),
	\end{align*} 
	admits a meromorphic continuation to the area $\mathcal{R}(1/2;\tau)^-$, with possible simple poles at $s\in\{1/2, 2/3,\cdots, (n-1)/n,1\},$ where $\mathcal{R}(1/2;\tau)^-$ is defined in \eqref{R}. Moreover, for any $3\leq k\leq n,$ if $L_F((k-1)/k,\tau)=0,$ then $s=(k-1)/k$ is not a pole.

Recall that we need to investigate the analytic behavior of the function
\begin{align*}
\mathcal{Z}_{m,*}(s,\tau)=\sum_P\frac1{c_P}\sum_{\chi\in\mathfrak{X}_P}\sum_{\phi\in \mathfrak{B}_{P,\chi}}{\mathcal{I}_{m,\chi}(s)}\cdot{\Lambda(s,\tau)}^{-1},
\end{align*} 
where the sum over standard parabolic subgroups $P$ is finite while the sum over cuspidal data $\chi$ is infinite. According to Theorem F in loc. cit., $\mathcal{Z}_*(s,\tau)$ converges absolutely and locally normally in the region $\Re(s)>1.$ Since each summand $\sum_{\phi\in \mathfrak{B}_{P,\chi}}{\mathcal{I}_{m,\chi}(s)}\cdot{\Lambda(s,\tau)}^{-1}$ admits a meromorphic continuation to the region $\mathcal{R}(1/2;\tau)^-,$ with possible simple poles at $s\in\{1/2, 2/3,\cdots, (n-1)/n\}$ and a pole of order at most $4$ at $s=1,$ we can consider (at least formally) the distribution   
\begin{equation}\label{I}
\mathcal{Z}_m(s,\tau)=\sum_P\frac1{c_P}\sum_{\chi\in\mathfrak{X}_P}\sum_{\phi\in \mathfrak{B}_{P,\chi}}{\widetilde{\mathcal{I}}_{m,\chi}(s)}\cdot{\Lambda(s,\tau)}^{-1},
\end{equation} 
where $\widetilde{\mathcal{I}}_{m,\chi}(s)$ is the continuation of $\mathcal{I}_{m,\chi}(s,\tau).$ Then we only need to show that $(s-1/2)(s-2/3)(s-3/4)(s-1)^4\mathcal{Z}(s)$ converges absolutely and locally normally inside the domain $\mathcal{R}(1/2;\tau)^-.$

\bigskip

\begin{thmx}\label{78}
Let notation be as before. Let $0\leq m\leq r-1.$ Then $\mathcal{Z}_m(s,\tau)$ admits a meromorphic continuation to the domain $\mathcal{R}(1/2;\tau)^-,$ where it has possible poles at $s=1/2$ and $s=1.$ Moreover, if $s=1/2$ is a pole, then it must be simple.
\end{thmx}

\bigskip 
\subsection{Generic Characters for $G$}\label{3.2}
Let $v$ be a nonarchimedean place of $F.$ Let $P$ be a standard parabolic subgroup of $G.$ Fix a Levi decomposition of $P=MN$ with $M$ containing the maximal splitting torus $\mathbb{G}_m^n.$ Let $\sigma_v$ be an irreducible admissible unitary representation of $M(F_v)$ and fix $\lambda\in \mathfrak{a}_P^*(\mathbb{C})=\mathfrak{a}_P^*\otimes \mathbb{C}.$ We shall use $I(\lambda,\sigma_v)$ to denote the induced representation 
\begin{align*}
I(\lambda,\sigma_v)=\Ind_{M(F_v)N(F_v)}^{G(F_v)}\sigma_v\otimes \exp\langle\lambda,H_M(\cdot)\rangle\otimes\boldsymbol{1}.
\end{align*} 
Since $F_v$ is nonarchimedean, the space $V(\lambda,\sigma_v)$ of $I(\lambda,\sigma_v)$ consists of the space of locally constant functions from $G(F_v)$ into the space $\mathcal{H}(\sigma_v)$ of $\sigma_v,$ such that 
\begin{align*}
h_{v,\lambda}(m_vn_vg_v)=\sigma_v(m_v)\exp\langle\lambda+\rho_P,H_{M,v}(m_v)\rangle h_{v,\lambda}(g_v),\ h_{v,\lambda}\in V(\lambda,\sigma_v).
\end{align*}
The group acts on $V(\lambda,\sigma_v)$ via the right regular action. Define the Whittaker function for the representation $I(\lambda,\sigma_v)$ as follows:
\begin{align*}
W_v(h_{v,\lambda};\sigma_v)=\int_{N(F_v)}h_{v,\lambda}(w_0n_v)\theta(n_v)dn_v.
\end{align*}
\begin{lemma}\label{60''}
Let notation be as above, then there exists a test function $h_v\in\mathcal{H}(\sigma_v)$ such that for any $\lambda\in \mathfrak{a}_P^*(\mathbb{C}),$ the Whittaker function 
\begin{align*}
W_v(h_{v};\lambda,\sigma_v)=\int_{N(F_v)}h_{v}(w_0n_v)\exp\langle\lambda+\rho_P,H_{M,v}(w_0n_v)\rangle\theta(n_v)dn_v\neq 0.
\end{align*}
\end{lemma}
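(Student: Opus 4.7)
The strategy is to exhibit a flat section $h_v$ whose restriction to the maximal compact subgroup is a bump function concentrated near $w_0$, engineered so that the factor $\exp\langle\lambda+\rho_P,H_{M,v}(w_0n_v)\rangle$ trivializes on its support. The Whittaker integral then collapses to a volume times a nonzero vector, yielding a value that is nonvanishing and manifestly independent of $\lambda$. The nonarchimedean hypothesis on $v$ is essential here, since it provides arbitrarily small compact open subgroups of $N(F_v)$ on which $\theta$ and the relevant Iwasawa cocycles vanish.

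Concretely, fix a nonzero vector $v_0\in\mathcal{H}(\sigma_v)$, let $K_v$ be the standard maximal compact subgroup of $G(F_v)$, and set $K_0=K_v\cap P(F_v)$. Choose a compact open subgroup $N'\subseteq N(F_v)$ small enough that $w_0N'\subseteq K_v$, that $\theta|_{N'}\equiv 1$, and that the multiplication map $K_0\times w_0N'\to K_v$ is injective. Then define $h_v$ as the unique $\mathcal{H}(\sigma_v)$-valued function on $G(F_v)$ that satisfies the $P(F_v)$-equivariance $h_v(mng)=\sigma_v(m)h_v(g)$ for $m\in M(F_v)$ and $n$ in the unipotent radical of $P$, whose restriction to $K_v$ is supported in $K_0\cdot w_0N'$, and which takes the value $\sigma_v(m(k_0))v_0$ at $k_0w_0n'$ for $k_0\in K_0$ and $n'\in N'$, where $m(k_0)$ denotes the Levi part of the Iwasawa factorization of $k_0$.

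For $n_v\in N'$ one has $w_0n_v\in K_v$, hence $H_{M,v}(w_0n_v)=0$, $h_v(w_0n_v)=v_0$, and $\theta(n_v)=1$. For $n_v\in N(F_v)\setminus N'$, a Bruhat-theoretic argument will give $h_v(w_0n_v)=0$: nonvanishing would force, via the Iwasawa decomposition of $w_0n_v$, the relation $w_0n_v(w_0n')^{-1}\in P(F_v)$ for some $n'\in N'$, equivalently $n_v(n')^{-1}\in w_0^{-1}P(F_v)w_0\cap N(F_v)=\{1\}$, whence $n_v=n'\in N'$, a contradiction. Consequently,
\begin{align*}
W_v(h_v;\lambda,\sigma_v)=\int_{N'}v_0\,dn_v=\vol(N')\cdot v_0,
\end{align*}
which is nonzero and independent of $\lambda\in\mathfrak{a}_P^*(\mathbb{C})$.

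The main delicate point will be verifying the injectivity of $K_0\times w_0N'\to K_v$ together with the triviality $w_0^{-1}P(F_v)w_0\cap N(F_v)=\{1\}$; both follow from standard Bruhat-theoretic considerations once $w_0$ is interpreted as the Weyl element implicitly chosen in the Whittaker integral as an open Bruhat cell representative adapted to $P$, and in turn they guarantee the compatibility of the prescribed values of $h_v$ on $K_0\cdot w_0N'$ with its $P(F_v)$-equivariant extension to all of $G(F_v)$.
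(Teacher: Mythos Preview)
Your bump-function approach is natural, but the crucial Bruhat claim $w_0^{-1}P(F_v)w_0\cap N(F_v)=\{1\}$ fails as soon as $P\neq B$, and no choice of Weyl representative rescues it. Concretely, with $N=N_P$ the unipotent radical of $P$, the condition forces $w(\Phi_{N_P})=-\Phi_{N_P}$; for $P$ of type $(2,1)$ in $\GL(3)$ this would require a permutation $w$ sending both $e_1-e_3$ and $e_2-e_3$ into $\{e_3-e_1,e_3-e_2\}$, i.e.\ $w(1)=w(2)=3$, which is impossible. If instead $N$ is the full Borel unipotent $N_B$, then $w_0^{-1}Pw_0\cap N_B$ contains the upper-unipotent part of the conjugate Levi $w_0^{-1}Mw_0$ and is again nontrivial. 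Consequently the support of $n_v\mapsto h_v(w_0 n_v)$ is not $N'$ but the noncompact set $(w_0^{-1}Pw_0\cap N)\cdot N'$; on this larger set the integrand equals $\sigma_v(w_0 q w_0^{-1})v_0\cdot\theta(q)$ with $q$ ranging over a noncompact unipotent subgroup of $M$, so your asserted evaluation $W_v(h_v;\lambda,\sigma_v)=\vol(N')\cdot v_0$ is incorrect and even convergence is in question.

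The paper avoids this by a two-step construction. First (Claim~\ref{59'}) it produces some $h_v^\circ$ with $W_v(h_v^\circ;\sigma_v)\neq 0$ at $\lambda=0$, using that for suitable data the local Rankin--Selberg integral equals $L_v(s,\pi_v\times\widetilde\pi_v)\neq 0$. Second, it builds $h_v$ from $h_v^\circ$ via
\[
h_v(g)=\int_{N(F_v)}\int_{M(F_v)}\sigma_v(m^{-1})\exp\langle-\lambda+\rho_P,H_M(m)\rangle\,\widetilde\varphi(mng)\,dm\,dn,
\]
with $\widetilde\varphi$ supported on the big cell $PN^-$ and equal to $\varphi_1(p)\varphi_2(n^-)h_v^\circ$ there. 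A change of variables then factors $W_v(h_v;\lambda,\sigma_v)$ as $W_v(h_v^\circ;\sigma_v)$ times a product of two scalar integrals over $P(F_v)$ and $N^-(F_v)$, which can be arranged to be a nonzero constant independent of $\lambda$. This sidesteps any Bruhat-intersection hypothesis and shifts the nonvanishing burden to the Rankin--Selberg theory. Your idea could be repaired along similar lines---one must feed in a genuine Whittaker vector for $\sigma_v$ rather than an arbitrary $v_0$, so that the residual integral over $w_0^{-1}Pw_0\cap N$ becomes a (nonzero) Jacquet integral for $\sigma_v$---but as written the argument is incomplete for $P\neq B$.
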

\begin{proof}
To construct such a $h_v,$ we start with the following auxiliary result: 
\begin{claim}\label{59'}
Let notation be as before, then there exists an $h_v^{\circ}\in\mathcal{H}(\sigma_v)$ such that 
\begin{equation}\label{155}
W_v(h_{v}^{\circ};\sigma_v):=\int_{N(F_v)}h_{v}^{\circ}(w_0n_v)\exp\langle\rho_P,H_{M,v}(w_0n_v)\rangle\theta(n_v)dn_v\neq 0.
\end{equation}
\end{claim}
Let $N^{-}$ be the opposite of $N,$ i.e., $N^{-}=w_0Nw_0^{-1}.$ Then one may take arbitrarily two functions $\varphi_1\in\mathcal{C}_c^{\infty}(P(F_v))$ and $\varphi_1\in\mathcal{C}_c^{\infty}(N^{-1}(F_v))$ to define 
\begin{align*}
\widetilde{\varphi}(g)=\begin{cases}
\varphi_1(p)\varphi_2(n^{-})h_v^{\circ},\ g=pn^{-}\in P(F_v)N^{-1}(F_v);\\
0,\ \text{otherwise}.
\end{cases}
\end{align*}
Now we let $h_v$ (depending on $\varphi_1$ and $\varphi_2$) be the function
\begin{align*}
h_v(g)=\int_{N(F_v)}\int_{M(F_v)}\sigma(m^{-1})\exp\langle-\lambda+\rho_P, H_M(m)\rangle\widetilde{\varphi}(mng)dmdn.
\end{align*}
Since $\exp\langle\rho_P, H_M(m)\rangle$ is the modular character, for any $m_1\in M(F_v),$ $n_1\in N(F_v),$ one has $d(m_1nm_1^{-1})=\exp\langle\rho_P, H_M(m_1)\rangle dn.$ Then by changing $m$ to $mm_1^{-1}$ and $n$ to $nn_1^{-1}$ we obtain that
\begin{align*}
h_v(m_1n_1g)&=\int_{N(F_v)}\int_{M(F_v)}\sigma(m^{-1})\exp\langle-\lambda+\rho_P, H_M(m)\rangle\widetilde{\varphi}(mnm_1n_1g)dmdn\\
&=\sigma(m_1)\exp\langle\lambda+\rho_P, H_M(m_1)\rangle h_v(g),
\end{align*} 
which implies that $h_v\in V(\lambda,\sigma).$ Now we have 
\begin{align*}
h_v(n^{-})=\int_{N(F_v)}\int_{M(F_v)}\sigma(m^{-1})\exp\langle-\lambda+\rho_P, H_M(m)\rangle\varphi_1(mn)\varphi_2(n^-)h_v^{\circ}dmdn.
\end{align*}
We will choose $\varphi_1$ so that $\varphi_1(mn)\sigma(m^{-1})h_v^{\circ}=\varphi_1(mn)h_v^{\circ}.$ Then we have
\begin{align*}
W_v(h_{v};\lambda,\sigma_v)=\int_{N^-(F_v)}\int_{N(F_v)}\int_{M(F_v)}\mathcal{F}(n^-,m,n) dn^-dmdn,
\end{align*}
where $\mathcal{F}(n^-,m,n):=\exp\langle-\lambda+\rho_P, H_M(m)\rangle W_v(h_{v}^{\circ};\sigma_v)\varphi_1(mn)\varphi_2(n^-)\theta(w_0^{-1}n^-w_0).$ Therefore, $W_v(h_{v};\lambda,\sigma_v)$ is equal to the product of $W_v(h_{v}^{\circ};\sigma_v)$ and 
\begin{align*}
\int_{M(F_v)N(F_v)}\exp\langle-\lambda+\rho_P, H_M(m)\rangle \varphi_1(mn)dmdn\int_{N^-(F_v)}\varphi_2(n^-)\theta(w_0^{-1}n^-w_0)dn^-.
\end{align*}
One can take appropriate $\varphi_1$ and $\varphi_2$ to make the above integral nonzero constant independent of $\lambda.$ Now Lemma \ref{60''} follows from Claim \ref{59'}.
\end{proof}
\begin{remark}
When $\Ind_{M(F_v)N(F_v)}^{G(F_v)}\sigma_v\otimes\boldsymbol{1}$ is unramified, we can simply take $h_v$ to be a spherical vector in $\mathcal{H}(\sigma_v).$ However, when $\Ind_{M(F_v)N(F_v)}^{G(F_v)}\sigma_v\otimes\boldsymbol{1}$ is ramified, one cannot take $h_v$ to be a new vector in $\mathcal{H}(\sigma_v)$ any more, since otherwise $W_v(h_v;\lambda,\sigma_v)$ would vanish identically.  
\end{remark}
\begin{proof}[Proof of Claim \ref{59'}]
Write $\pi_v$ for the representation $\Int_{M(F_v)N(F_v)}^{G(F_v)}\sigma_v\otimes\boldsymbol{1}.$ Let $\Phi_v\in \mathcal{S}(F_v^n),$ and $h_{1,v}, h_{2,v}\in\mathcal{H}(\sigma_v).$ Let $s\in\mathcal{C}$ such that $\Re(s)>1.$ Then we consider the local Rankin-Selberg integral $\Psi_v(s;h_{1,v},h_{2,v},\Phi_v)$ defined by
\begin{align*}
\int_{N(F_v)\backslash G(F_v)}W_v(x_v;h_{1,v},\sigma_v)\overline{W_v(x_v;h_{2,v},\sigma_v)}\Phi_v((0,\cdots,0,1) x_v)|\det x_v|_{F_v}^sdx_v,
\end{align*}
where for $1\leq j\leq 2,$ $W_v(x_v;h_{j,v},\sigma_v)$ is defined by 
\begin{align*}
\int_{N(F_v)}h_{j,v}(w_0n_vx_v)\exp\langle\rho_P,H_M(w_0n_vx_v)\rangle\theta(n_v)dn_v.
\end{align*}
By \cite{JPSS83}, there exists $h_{1,v}^{\circ}, h_{2,v}^{\circ}\in\mathcal{H}(\sigma_v)$ and $\Phi_v^{\circ}\in \mathcal{S}(F_v^n),$ such that the local Rankin-Selberg integral $\Psi_v(s;h_{1,v}^{\circ},h_{2,v}^{\circ},\Phi_v^{\circ})$ equals exactly the local L-function $L_v(s,\pi_v\times\widetilde{\pi}_v).$ One then applies the bound in  \cite{LRS99} to see that $|L_v(s,\pi_v\times\widetilde{\pi}_v)|>0.$ Hence $I_v(s;h_{1,v}^{\circ},h_{2,v}^{\circ},\Phi_v^{\circ})\neq0,$ which implies that there exists some $x_v\in G(F_v)$ such that $W_v(x_v;h_{1,v}^{\circ},\sigma_v)\neq 0.$ Then we can take $h_{v}^{\circ}=\pi_v(x_v)h_{1,v}^{\circ}$ to get \eqref{155}.
\end{proof}

Let $\epsilon_0>0$ be a small constant (smaller than $1/(n^2+1)$). Let $\mathcal{C}_{\epsilon_0}^+$ be the piecewise smooth curve consisted of three pieces: $\{s\in\mathbb{C}:\ \Re(s)=0,\ \Im(s)\geq \epsilon_0\},$ $\{s\in\mathbb{C}:\ \Re(s)\geq0,\  |s|=\epsilon_0\},$ and $\{s\in\mathbb{C}:\ \Re(s)=0,\ \Im(s)\geq \epsilon_0\}.$ Then by Lemma \ref{49lem}, for any $s\in \mathcal{C}_{\epsilon_0}^+$ and any cuspidal representations $\sigma$ and $\sigma'$ as above, 
\begin{equation}\label{157}
L(1\pm s,\sigma\times\sigma')\ll_{F,\epsilon_0} \left(1+|\epsilon_0(1-\epsilon_0)|^{-1}\right)C(\sigma\times\sigma')^{\frac{\beta_{m,m'}\mp\Re(s)}{2}+\epsilon_0},
\end{equation}
where the implied constant depends only on $F$ and $\epsilon_0.$ We will fix $\epsilon_0$ henceforth.

As before, we fix a proper parabolic subgroup $P\in\mathcal{P}$ of type $(n_1,n_2,\cdots,n_r).$ Let $\mathfrak{X}_P$ be the subset of cuspidal data $\chi=\{(M,\sigma)\}$ such that $M=M_P.$ For any meromorphic function $F$ and $\epsilon\geq0,$ we denote by $\mathcal{V}(F)$ the set of poles of $M$ and denote by $\mathcal{U}_{\epsilon}(F)$ the set $\{s\in\mathbb{C}:\ |s-\rho|> \epsilon,\ \forall\ \rho\in \mathcal{V}(F)\}.$

For any $a<b,$ write $\mathcal{S}_{(a,b)}$ for the strip $a<\Re(s)<b.$ Let $s\in \mathcal{S}_{(0,1)}$ and $1\leq m\leq r-1.$ Let $j_m, \cdots,j_1$ be $m$ integers such that $1\leq j_m<\cdots<j_1\leq r-1.$ For any $1\leq l\leq m,$ let $\delta_l(s)$ be of the form $a_ls+b_l,$ with $a_l, b_l\in \mathbb{Z};$ and for $l\in \{1,2,\cdots,r-1\}\setminus \{j_k:\ 1\leq k\leq m\},$ $\mathcal{C}_l\in \{\mathcal{C}_{\epsilon_0}^+, \mathcal{C}\}.$ We say $(\delta_m(s),\cdots,\delta_1(s))$ is $nice$ $with$ $respect$ $to$ $\chi=\Ind\sigma_1|\cdot|^{\lambda_1}\otimes\cdots\otimes \sigma_r|\cdot|^{\lambda_r}\in\mathfrak{X}_P$ if there exists a finite set of integers $\mathcal{L}$ (where elements might have multiplicities) and linear forms $c(s,\kappa_l)$ and $\tilde{c}(s,\kappa_l)$ with respect to $s$ and $\kappa_l,$ $l\in \mathcal{L},$ i.e., $c(s,\kappa_l)$ (resp. $\tilde{c}(s,\kappa_l)$) is of the form $a_ls+b_l\kappa_l+c_l$ with $b_l\neq0,$ (resp. $a_l's+b_l'\kappa_l+c_l'$ with $b_l'\neq0$), where the coefficients are integers, such that $\underset{\kappa_{j_m}=\delta_m(s)}{\Res}\cdots\underset{\kappa_{j_1}=\delta_1(s)}{\Res}\mathcal{G}(\boldsymbol{\kappa};s)$ is of the form 
\begin{equation}\label{192.}
R(s;\chi)\prod_{l\in\mathcal{L}}\frac{\Lambda(c(s,\kappa_l),\sigma_l\otimes\tau\times\widetilde{\sigma}'_l)}{\Lambda(\tilde{c}(s,\kappa_l),\sigma_l\times\widetilde{\sigma}'_l)}\prod_k{\Lambda(s,\sigma_k\otimes\tau\times\widetilde{\sigma}_k)},
\end{equation}
where $\mathcal{G}(\boldsymbol{\kappa};s)=\mathcal{G}(\boldsymbol{\kappa};s,P,\chi)$ is defined as 
\begin{align*}
\prod_{k=1}^r\Lambda(s,\sigma_k\otimes\tau\times\widetilde{\sigma}_k)\prod_{j=1}^{r-1}\prod_{i=1}^j\frac{\Lambda(s+\kappa_{i,j},\sigma_i\otimes\tau\times\widetilde{\sigma}_{j+1})\Lambda(s-\kappa_{i,j},\sigma_{j+1}\otimes\tau\times\widetilde{\sigma}_{i})}{\Lambda(1+\kappa_{i,j},\sigma_{i}\times\widetilde{\sigma}_{j+1})\Lambda(1-\kappa_{i,j},\sigma_{j+1}\times\widetilde{\sigma}_{i})};
\end{align*} 
$\tilde{c}(s,\kappa_l)\in 1-\mathcal{D}_{\chi}$ for any $s\in\mathcal{S}_{(1/3,1)},$ $\kappa_l\in \mathcal{D}_{\chi};$ and the function $R(s;\chi)$ is meromorphic satisfying that for any $s\in\mathcal{U}_{\epsilon}(R(\cdot;\chi)),$
\begin{equation}\label{154}
R(s;\chi)=\prod_{l'\in\mathcal{L}'}\frac{\Lambda(c_{l'}(s),\sigma_{l'}\otimes\tau\times\widetilde{\sigma}'_{l'})}{\Lambda(\tilde{c}_{l'}(s),\sigma_{l'}\times\widetilde{\sigma}'_{l'})},
\end{equation}
for some finite index set $\mathcal{L}'$ (with multiplicities) and linear forms $c_{l'}(s)$ and $\tilde{c}_{l'}(s).$

\begin{thm}\label{58''}
Let notation be as before. Assume that $(\delta_m(s),\cdots,\delta_1(s))$ is $nice$ $with$ $respect$ $to$ $\chi.$ Then for any test function $\varphi$ and $\Phi,$ we have
\begin{align*}
\sum_{\chi\in\mathfrak{X}_P}\sum_{\phi\in \mathfrak{B}_{P,\chi}}\int_{\mathcal{C}_1}\cdots\cdots\int_{\mathcal{C}_{r-1}}\bigg|\underset{\kappa_{j_m}=\delta_m(s)}{\Res}\cdots\underset{\kappa_{j_1}=\delta_1(s)}{\Res}\mathcal{F}(\boldsymbol{\kappa};s)\bigg|\frac{d\kappa_{r-1}\cdots d\kappa_{1}}{d\kappa_{j_m}\cdots d\kappa_{j_1}}<\infty,
\end{align*}
for any $s\in \mathcal{R}(1/2;\tau)^-$ outside the set $\mathcal{S}^{ex}\bigcup\cup_{\chi}\mathcal{U}_{0}(R(\cdot;\chi)),$ where
\begin{align*}
\mathcal{S}^{ex}=\bigcup_{l=1}^{r-1}\Big\{s\in \mathbb{C}:\ s\in \mathcal{R}(1/2;\tau)^-,\ c(s,\kappa_l)\in\{0, 1\}\ \text{for some}\ \kappa_l\in \mathcal{C}_l\Big\}.
\end{align*}
Moreover, the point-wise defined function 
\begin{align*}
\sum_{\chi\in\mathfrak{X}_P}\sum_{\phi\in \mathfrak{B}_{P,\chi}}\int_{\mathcal{C}_1}\cdots\cdots\int_{\mathcal{C}_{r-1}}\underset{\kappa_{j_m}=\delta_m(s)}{\Res}\cdots\underset{\kappa_{j_1}=\delta_1(s)}{\Res}\mathcal{F}(\boldsymbol{\kappa};s)\frac{d\kappa_{r-1}\cdots d\kappa_{1}}{d\kappa_{j_m}\cdots d\kappa_{j_1}}
\end{align*}
converges locally normally in the region $\mathcal{R}(1/2;\tau)^-\setminus \left(\mathcal{S}^{ex}\bigcup\cup_{\chi}\mathcal{U}_{0}(R(\cdot;\chi))\right);$ admits a meromorphic continuation to the area $\mathcal{R}(1/2;\tau)^-\setminus \mathcal{S}^{ex}.$
\end{thm}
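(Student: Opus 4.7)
The plan is to reduce to uniform estimates on the explicit residue formula \eqref{192.} supplied by the $nice$ hypothesis, and then to combine the convexity bound \eqref{157} with the absolute convergence of the unresidued spectral expansion guaranteed by Theorem F of \cite{Yang19}.

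First I would invoke the $nice$ hypothesis to replace the iterated residue by its explicit form \eqref{192.}: a product of the auxiliary meromorphic factor $R(s;\chi)$ of shape \eqref{154}, the diagonal factors $\prod_k\Lambda(s,\sigma_k\otimes\tau\times\widetilde{\sigma}_k)$, and shifted Rankin--Selberg ratios
\[
\prod_{l\in\mathcal{L}}\frac{\Lambda(c(s,\kappa_l),\sigma_l\otimes\tau\times\widetilde{\sigma}'_l)}{\Lambda(\tilde{c}(s,\kappa_l),\sigma_l\times\widetilde{\sigma}'_l)}.
\]
Because $\tilde{c}(s,\kappa_l)\in 1-\mathcal{D}_{\chi}$ for $s\in\mathcal{S}_{(1/3,1)}$ and $\kappa_l\in\mathcal{C}_l\subset\mathcal{D}_{\chi}$, every denominator lies in the quantitative zero-free region described by \eqref{A}-\eqref{B}, so that the reciprocal denominator is bounded by an explicit polynomial in the analytic conductor $C(\sigma_l\times\widetilde{\sigma}'_l)$ uniformly in $\kappa_l$ on the contour.

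Next I would apply the convexity-type bound \eqref{157} to each numerator $L$-factor as well as to the diagonal factors. On any compact subset of $\mathcal{R}(1/2;\tau)^-\setminus\left(\mathcal{S}^{ex}\cup\bigcup_\chi\mathcal{U}_0(R(\cdot;\chi))\right)$ this produces a polynomial-in-conductor majorization in which the exponent carries a net gain of $\epsilon_0$ inherited from \eqref{157}. Feeding this majorization into the $L^2$-pairing $\sum_{\phi}\langle\mathcal{I}_P(\lambda,\varphi)\phi,\phi\rangle$ and comparing with the absolutely convergent series of Theorem F of \cite{Yang19} (valid for $\Re(s)>1$), the $\epsilon_0$-gain is exactly what one needs to dominate the sum over $\chi\in\mathfrak{X}_P$ and $\phi\in\mathfrak{B}_{P,\chi}$ by a convergent series uniformly on compacta, thereby yielding the claimed absolute convergence.

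Finally, having locally normal convergence on the open set $\mathcal{R}(1/2;\tau)^-\setminus\left(\mathcal{S}^{ex}\cup\bigcup_\chi\mathcal{U}_0(R(\cdot;\chi))\right)$, Morera's theorem produces a holomorphic sum there; to extend meromorphically across $\bigcup_\chi\mathcal{U}_0(R(\cdot;\chi))$ I would observe that in any compact neighborhood of such a point only finitely many $R(s;\chi)$ contribute a pole, while the remaining tail is still absolutely convergent and holomorphic, so separating the finite singular part from the convergent tail yields meromorphic continuation to $\mathcal{R}(1/2;\tau)^-\setminus\mathcal{S}^{ex}$. The principal difficulty lies in the second step: the $\epsilon_0$-gain from \eqref{157} must genuinely outstrip the combined polynomial growth from the numerator shifts and the main diagonal factors, uniformly in the choices of the contour pieces $\mathcal{C}_l\in\{\mathcal{C}_{\epsilon_0}^+,\mathcal{C}\}$; it is precisely the degeneration of \eqref{157} at $c(s,\kappa_l)\in\{0,1\}$ that forces the excision of $\mathcal{S}^{ex}$.
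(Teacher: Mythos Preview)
Your overall architecture---reduce the residue to conductor-polynomial bounds and then appeal to the absolute convergence of the unresidued spectral sum from \cite{Yang19}---matches the paper's, but the bridge you propose between these two steps does not work as stated, and the paper uses a genuinely different device to cross it.

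The specific problem is your ``$\epsilon_0$-gain'' claim. In \eqref{157} the $\epsilon_0$ enters the exponent with a \emph{positive} sign; it is a loss, not a gain. After applying convexity to the numerators and the zero-free lower bound to the denominators, what you obtain is $|\underset{\cdots}{\Res}\mathcal{F}(\boldsymbol{\kappa};s)|\ll |\langle\mathcal{I}_P(\boldsymbol{\kappa}_s,\varphi)\phi_1,\phi_2\rangle|\cdot C(\chi)^{N}$ for some fixed $N>0$, with $C(\chi)$ a product of analytic conductors. Theorem~F (or Corollary~41) of \cite{Yang19} controls $\sum_{\chi,\phi}|\langle\mathcal{I}_P(\boldsymbol{\kappa},\varphi)\phi,\phi\rangle\,\Psi(s',W,W;\boldsymbol{\kappa})|$ for $\Re(s')$ large, not $\sum_{\chi,\phi}|\langle\cdots\rangle|\cdot C(\chi)^{N}$. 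To pass from the latter to the former you need a \emph{lower} bound $|\Psi(s',W,W;\boldsymbol{\kappa})|\gg C(\chi)^{-N'}$ uniform in $\chi$, and nothing in your outline supplies this.

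The paper closes this gap by a different route. It first uses Phragm\'en--Lindel\"of together with Stirling to shift every $\Lambda$-factor to a point $s'$ with $\Re(s')$ large, absorbing the conductor powers into the archimedean part so that the bound reads $|\Res\Psi_{12}(s)|\ll \big|\Psi_{\infty}(s',\ldots;\boldsymbol{\kappa}^{\circ})\big|\cdot\big|L^{S}(1\pm\kappa_{i,j}^{\circ},\ldots)\big|^{-1}$. The crucial step is then Lemma~\ref{60''}: for each ramified place $v\in S_{\varphi,\tau,\Phi}$ one constructs a vector $\phi_v^{\circ}$ whose Whittaker function is nonvanishing \emph{uniformly in} $\boldsymbol{\kappa}$, forcing the local Rankin--Selberg integral $\Psi_v(s',W_v^{\circ},W_v^{\circ};\boldsymbol{\kappa})$ to be bounded below by a positive constant depending only on the test data. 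This lower bound lets the paper reassemble the right-hand side into an honest global $|\Psi(s',W,W;\boldsymbol{\kappa}^{\circ},\Phi)|$, and only then does Corollary~41 of \cite{Yang19} apply. Your proposal omits this nonvanishing-Whittaker input entirely; without it the comparison with Theorem~F cannot be carried out.

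Your final paragraph on meromorphic continuation across $\bigcup_{\chi}\mathcal{U}_0(R(\cdot;\chi))$ by isolating finitely many singular $\chi$ is correct and is essentially how the paper handles it (via $R_{\Sin}(s;\chi)$).
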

\begin{proof}
Recall that we have written $\mathcal{F}(\boldsymbol{\kappa};s)=\mathcal{F}(\boldsymbol{\kappa};s,P,\chi)$ for the meromorphic function  $R_{\varphi}(s,\boldsymbol{\kappa};\phi)\Lambda(s,\pi_{\boldsymbol{\kappa}}\otimes\tau\times\widetilde{\pi}_{-\boldsymbol{\kappa}})$. Hence for any $\Re(s)>1,$ 
\begin{align*}
\mathcal{F}(\boldsymbol{\kappa};s)=\sum_{\phi_1\in \mathfrak{B}_{P,\chi}}\langle\mathcal{I}_P(\lambda,\varphi)\phi_1,\phi_2\rangle\cdot\Psi(s,W_1,W_2;\lambda),
\end{align*}
which admits a meromorphic continuation to the whole complex plane (see Theorem H in \cite{Yang19}). From the transform \eqref{145} we see that 
\begin{equation}
\begin{cases}
\lambda_1=\lambda_1(\boldsymbol{\kappa})=[(r-1)\kappa_1+(r-2)\kappa_2+\cdots+\kappa_{r-1}]\cdot{r}^{-1},\\
\lambda_j=\lambda_j(\boldsymbol{\kappa})\lambda_{j-1}-\kappa_{j-1},\ 2\leq j\leq r-1,\\
\lambda_r=\lambda_r(\boldsymbol{\kappa})-\lambda_1-\lambda_2-\cdots-\lambda_{r-1}.
\end{cases}
\end{equation}
Hence, we may write $\langle\mathcal{I}_P(\lambda,\varphi)\phi_1,\phi_2\rangle=\langle\mathcal{I}_P(\boldsymbol{\kappa},\varphi)\phi_1,\phi_2\rangle,$ and $\Psi(s,W_1,W_2;\lambda)=\Psi(s,W_1,W_2;\boldsymbol{\kappa}).$ Since $\langle\mathcal{I}_P(\boldsymbol{\kappa},\varphi)\phi_1,\phi_2\rangle$ is the Mellin inversion of some compact support smooth function, so it is entire with respect to $\boldsymbol{\kappa}.$ Therefore,
\begin{align*}
\underset{\kappa_{j_m}=\delta_m(s)}{\Res}\cdots\underset{\kappa_{j_1}=\delta_1(s)}{\Res}\mathcal{F}(\boldsymbol{\kappa};s)=\sum_{\phi_1}\langle\mathcal{I}_P(\boldsymbol{\kappa}_s,\varphi)\phi_1,\phi_2\rangle\underset{\kappa_{j_m}=\delta_m(s)}{\Res}\cdots\underset{\kappa_{j_1}=\delta_1(s)}{\Res}\Psi(s;\boldsymbol{\kappa}),
\end{align*}
where $\phi_1$ runs over $\mathfrak{B}_{P,\chi};$  $\boldsymbol{\kappa}_s=(\kappa_1,\cdots,\kappa_{j_{m}-1},\delta_m(s),\cdots,\kappa_{j_1-1},\delta_{1}(s),\cdots,\kappa_{r-1});$ $\Psi(s;\boldsymbol{\kappa})=\Psi(s,W_1,W_2;\boldsymbol{\kappa}).$  Let $\iota$ be the canonical isomorphism of vector spaces $\iota:\ i\mathfrak{a}_P^*/i\mathfrak{a}_G^*\xrightarrow{\sim}\mathbb{R}^{r-1}.$ Let $\{e_1, e_2,\cdots, e_{r-1}\}$ be an orthonormal basis of $\mathbb{R}^{r-1}.$ Set $\boldsymbol{\kappa}^{\circ}=(\kappa_1^{\circ},\kappa_2^{\circ},\cdots,\kappa_{r-1}^{\circ}),$ where 
\begin{align*}
\kappa_j^{\circ}=\begin{cases}
	\iota^{-1}(\epsilon_0e_j),\ \text{if $j\in \{j_1,\cdots, j_m\};$}\\
	\kappa_j,\ \text{otherwise}.
\end{cases}
\end{align*}
Let $\boldsymbol{\kappa}_s^{\circ}=\boldsymbol{\kappa}_s-\boldsymbol{\kappa}^{\circ}.$ Then $\langle\mathcal{I}_P(\boldsymbol{\kappa}_s,\varphi)\phi_1,\phi_2\rangle$ is equal to $\langle\mathcal{I}_P(\boldsymbol{\kappa}^{\circ}+\boldsymbol{\kappa}_s^{\circ},\varphi)\phi_1,\phi_2\rangle.$

Now we shall study analytic behavior of $\underset{\kappa_{j_m}=\delta_m(s)}{\Res}\cdots\underset{\kappa_{j_1}=\delta_1(s)}{\Res}\Psi(s,W_1,W_2;\boldsymbol{\kappa}).$ Fix arbitrarily an $s_0\in  \mathcal{S}_{(0,1)}\setminus \mathcal{S}^{ex},$ then there exists $\epsilon=\epsilon(s)>0$ such that for any $s$ such that $|s-s_0|\leq\epsilon$ one has $|c(s,\kappa_l)|\geq \epsilon$ and $|c(s,\kappa_l)-1|\geq \epsilon$ for any $l\in\mathcal{L}.$ Denote by $B_{\epsilon}(s_0)$ the open neighborhood $\{s\in\mathbb{C}:\ |s-s_0|\leq\epsilon\}.$ Let $Sin(s_0,\epsilon)$ be the collection of poles (with multiplicity) of $R(s;\chi)$ in $B_{\epsilon}(s_0).$ Let 
\begin{align*}
R_{\Sin}(s;\chi):=\prod_{\rho\in \Sin(s_0,\epsilon)}(s-\rho).
\end{align*}
Then $R_{\Sin}(s;\chi)$ is a well defined polynomial since $\Sin(s_0,\epsilon)$ is finite. Note that the meromorphic function $R_{\Sin}(s;\chi)R(s;\chi)$ is holomorphic in $B_{\epsilon}(s_0).$ Then the function  $R_{\Sin}(s;\chi)\cdot\underset{\kappa_{j_m}=\delta_m(s)}{\Res}\cdots\underset{\kappa_{j_1}=\delta_1(s)}{\Res}\Psi(s,W_1,W_2;\boldsymbol{\kappa})$ is holomorphic in $B_{\epsilon}(s_0).$

 \begin{itemize}
	\item[Case 1:]If $c(s,\delta_{j}(s))=1,$ then $L(c(s,\kappa_l),\sigma\otimes\tau\times\sigma')$ has a simple pole at $\kappa_l=\delta_j(s).$ Let $C_{\epsilon}=\{s\in\mathbb{C}:\ |s-1|=\epsilon\}.$ By trianGLe inequality, we have that  $|\Res_{\kappa_l=\delta_j(s)}L(c(s,\kappa_l),\sigma\otimes\tau\times\sigma')|=|\Res_{s=1}L(s,\sigma\otimes\tau\times\sigma')|\leq(2\pi)^{-1}\cdot\int_{C_{\epsilon}}|L(s,\sigma\otimes\tau\times\sigma')||ds|,$ which is dominated, according to Lemma \ref{49lem}, by
	\begin{align*}
	\int_{C_{\epsilon}}\left(1+|s(s-1)|^{-1}\right)C(\sigma\otimes \tau\times\sigma')^{\frac{2-\Re(s)}{2}+\epsilon}|ds|\ll C(\sigma\otimes \tau\times\sigma').
	\end{align*} 
	where the implied constant is absolute, depending only on the base field $F$. In this case, the archimedean $L$-factor becomes 
	\begin{align*}
	L_{\infty}(1,\sigma\otimes\tau\times\sigma')=\prod_v\prod_{i=1}^{n_1}\prod_{j=1}^{n_1'}\Gamma_{F_v}\left(1+\mu_{\sigma\otimes\tau\times \sigma';v,i,j}\right).
	\end{align*}
	Note that for each local factor $\Gamma_{F_v}\left(1+\mu_{\sigma\otimes\tau\times \sigma';v,i,j}\right),$ one can apply Lemma \ref{48'} to show that $\Gamma_{F_v}\left(1+\mu_{\sigma\otimes\tau\times \sigma';v,i,j}\right)\asymp_s \Gamma_{F_v}\left(s+\mu_{\sigma\otimes\tau\times \sigma';v,i,j}\right),$ where the implied constant depends only on $s$. Hence, $L_{\infty}(1,\sigma\otimes\tau\times\sigma')\asymp_s L_{\infty}(s,\sigma\otimes\tau\times\sigma'),$ with the implied constant relying only on $s$. Hence,
	\begin{equation}\label{161'}
	\underset{\kappa_{l}=\delta_j(s)}{\Res}\Lambda(c(s,\kappa_l),\sigma\otimes\tau\times\sigma')\ll_s C(\sigma\otimes\tau\times\sigma';\gamma)|L_{\infty}(s,\sigma\otimes\tau\times\sigma')|.
	\end{equation}
	\item[Case 2:] If $c(s,\delta_{j}(s))=0,$ then archimedean factor $L_{\infty}(c(s,\kappa_l),\sigma\otimes\tau\times\sigma')$ has a possible simple pole at $\kappa_l=\delta_j(s).$ Then one has that $\Res_{\kappa_l=\delta_j(s)}\Lambda(c(s,\kappa_l),\sigma\otimes\tau\times\sigma')=\Res_{c(s,\kappa_l)=0}\Lambda(c(s,\kappa_l),\sigma\otimes\tau\times\sigma')=L(0,\sigma\otimes\tau\times\sigma')\Res_{s=0}L_{\infty}(s,\sigma\otimes\tau\times\sigma').$ Note that 
	\begin{align*}
	L_{\infty}(s,\sigma\otimes\tau\times\sigma')=\prod_v\prod_{i=1}^{n_1}\prod_{j=1}^{n_1'}\Gamma_{F_v}\left(s+\mu_{\sigma\otimes\tau\times \sigma';v,i,j}\right).
	\end{align*}
	Since $\tau$ is unitary, by \cite{LRS99} one has that $\Re(u_{\sigma\otimes\tau\times \sigma';v,i,j})\geq -3/5>-1,$ for any $v, i, j$ as above. Note that $\Gamma(s)$ only has simple poles at $s=-k,$ $k\in\mathbb{N}_{\geq 0}.$ Hence, there is a unique archimedean place $v_0$ and a unique Satake parameter $u_{\sigma\otimes\tau\times \sigma';v_0,i_0,j_0}$ such that $\Gamma_{F_v}\left(s+\mu_{\sigma\otimes\tau\times \sigma';v_0,i_0,j_0}\right)$ has a simple pole at $s=0.$ Hence $\mu_{\sigma\otimes\tau\times \sigma';v_0,i_0,j_0}=0.$ The residue is $\Res_{s=0}\Gamma_{F_v}\left(s\right)=1.$ In this case, since $\mu_{\sigma\otimes\tau\times \sigma';v_0,i_0,j_0}=0,$ Stirling formula implies that 
	\begin{equation}\label{159}
	|\Res_{s=0}L_{\infty}(s,\sigma\otimes\tau\times\sigma')|\asymp |L_{\infty}(s,\sigma\otimes\tau\times\sigma')|,
	\end{equation}
	where the implies constant is absolute. Since in this case we have $\sigma\otimes\tau\simeq\sigma',$ $L(s,\sigma\otimes\tau\times\sigma')$ has simple poles precisely at $s=1.$ Consider instead the function $f(s)=(s-1)(s+2)^{-(5+\beta_{n_1,n_1'}-\beta)/2}L(s,\sigma\otimes\tau\times\sigma'),$ where $\beta_{n_1,n_1'}:=1-1/(n_1^2+1)-1/(n_1'^2+1)$ and  $\beta=\Re(s).$ Then clearly $f(s)$ is holomorphic and of order 1 in the right half plane $\Re(s)>-\beta_{n_1,n_1'}.$ Hence by Phragm\'{e}n-Lindel\"{o}f principle we have that $f(s)$ is bounded by $O_{\epsilon}\left(C(\sigma\otimes\tau\times\sigma';\gamma)^{(1+\beta_{n_1,n_1'}-\beta)/{2}+\epsilon}\right)$ in the strip $-\beta_{n_1,n_1'}\leq\Re(s)\leq 1+\beta_{n_1,n_1'},$ leading to the estimate
	\begin{equation}\label{160}
	L(0,\sigma\otimes\tau\times\sigma')\ll_{\epsilon}C(\sigma\otimes\tau\times\sigma';\gamma)^{\epsilon+1/2+\beta_{n_1,n_1'}/2},
	\end{equation}
	where the implied constant is absolute. Hence, combining the estimates \eqref{159} and \eqref{160} we then obtain 
	\begin{equation}\label{161}
	\underset{\kappa_{l}=\delta_j(s)}{\Res}\Lambda(c(s,\kappa_l),\sigma\otimes\tau\times\sigma')\ll_{\epsilon}C(\sigma\otimes\tau\times\sigma';\gamma)^{4/5+\epsilon}|L_{\infty}(s,\sigma\otimes\tau\times\sigma')|.
	\end{equation}
\end{itemize}
In all, combining \eqref{161'} and \eqref{161}, we conclude that 
\begin{equation}\label{163}
\underset{\kappa_{l}=\delta_j(s)}{\Res}\Lambda(c(s,\kappa_l),\sigma\otimes\tau\times\sigma')\ll_{F,\epsilon} C(\sigma\otimes\tau\times\sigma';\gamma)|L_{\infty}(s,\sigma\otimes\tau\times\sigma')|,
\end{equation}
where the implied constant depends only on $\epsilon$ and the base field $F.$ 
\medskip

Let $\Phi=\Phi_{\infty}\cdot\prod_{v<\infty}\Phi_{v}\in\mathcal{S}_0(\mathbb{A}_F^n)$ be a test function, where $\Phi_{\infty}=\prod_{v\mid\infty}\Phi_v.$ Let $x_v=(x_{v,1},x_{v,2},\cdots,x_{v,n})\in F_v^n,$ then by definition, $\Phi_v$ is of the form
\begin{equation}\label{166}
\Phi_v(x_v)=e^{-\pi \sum_{j=1}^nx_{v,j}^2}\cdot \sum_{k=1}^mQ_k(x_{v,1},x_{v,2},\cdots,x_{v,n}),
\end{equation}
where $F_v\simeq \mathbb{R},$ $Q_k(x_{v,1},x_{v,2},\cdots,x_{v,n})\in \mathbb{C}[x_{v,1},x_{v,2},\cdots,x_{v,n}]$ are monomials; and
\begin{equation}\label{167}
\Phi_v(x_v)=e^{-2\pi \sum_{j=1}^nx_{v,j}\bar{x}_{v,j}}\cdot\sum_{k=1}^m Q_k(x_{v,1},\bar{x}_{v,1},x_{v,2},\bar{x}_{v,2},\cdots,x_{v,n},\bar{x}_{v,n}),
\end{equation}
where $F_v\simeq \mathbb{C}$ and $Q_k(x_{v,1},\bar{x}_{v,1},x_{v,2},\bar{x}_{v,2},\cdots,x_{v,n},\bar{x}_{v,n})$ are monomials in the ring $\mathbb{C}[x_{v,1},\bar{x}_{v,1},x_{v,2},\bar{x}_{v,2},\cdots,x_{v,n},\bar{x}_{v,n}].$ Thus there exists a finite index set $J$ such that 
\begin{align*}
\Phi_{\infty}(x_{\infty})=\sum_{\boldsymbol{j}=(j_v)_{v\mid\infty}\in J}\prod_{v\mid\infty}\Phi_{v,j_v}(x_v),\quad x_{\infty}=\prod_{v\mid\infty}x_v\in G(\mathbb{A}_{F,\infty}),
\end{align*}
where each $\Phi_{v,j_v}$ is of the form in \eqref{166} or \eqref{167} with $m=1.$ Let $\Phi_{\infty,\boldsymbol{j}}=\prod_{v\mid\infty}\Phi_{v,j-v},$ $\boldsymbol{j}=(j_v)_{v\mid\infty}\in J.$ Then $\Phi$ is equal to the sum over $\boldsymbol{j}\in J$ of each $\Phi_{\boldsymbol{j}}=\Phi_{\infty,\boldsymbol{j}}\prod_{v<\infty}\Phi_{v}\in\mathcal{S}_0(\mathbb{A}_F^n).$ According to \cite{Jac09}, $\Psi_v\left(s,W_{1,v},W_{2,v};\boldsymbol{\kappa},\Phi_{v,j}\right)$ converges absolutely in $\Re(s')>0$ for each $v\mid\infty$ and $j\in J.$ Hence, one has that
\begin{equation}\label{169}
\Big|\prod_{v\mid\infty}\Psi_v\left(s,W_{1,v},W_{2,v};\boldsymbol{\kappa},\Phi_{v}\right)\Big|\leq \sum_{\boldsymbol{j}\in J}\Big|\prod_{v\mid\infty}\Psi_v\left(s,W_{1,v},W_{2,v};\boldsymbol{\kappa},\Phi_{v,j_v}\right)\Big|.
\end{equation}

Since each $\Phi_{v,j_v}$ is a monomial multiplying an exponential function with negative exponent, $\Psi_v\left(s,W_{1,v},W_{2,v};\boldsymbol{0},\Phi_{v,j_v}\right)$ is in fact of the form $c_1\pi^{c_2s}\prod_i\prod_j\Gamma(s+\nu_{i,j}),$ where $c_1=c_1(v),$ $c_2=c_2(v)$ and $\nu_{i,j}=\nu_{i,j}(v)$ are some constants and the product is finite. Hence, $\Psi_v\left(s,W_{1,v},W_{2,v};\boldsymbol{\kappa},\Phi_{v,j_v}\right)$ is in fact of the form $c_1\pi^{c_2s}\prod_i\prod_j\Gamma(s+\lambda_i-\lambda_j+\nu_{i,j}).$ Since the local integral  $\Psi_v\left(s,W_{1,v},W_{2,v};\boldsymbol{\kappa},\Phi_{v,j}\right)$ converges absolutely in $\Re(s)>0$ for each $v\mid\infty,$ $\boldsymbol{\kappa}\in i\mathfrak{a}_P^*/i\mathfrak{a}_G^*$ and $j\in J,$ then there is no pole in the right half plane $\Re(s)>0.$ So one must have that $\Re(\nu_i)\geq0.$ Also, note that for each archimedean place $v,$ there exists a polynomial $Q_1(s,\boldsymbol{\kappa})\in\mathbb{C}[s,\kappa_1,\cdots,\kappa_{r-1}]$ (see loc. cit.) with integers $n_{i,j}$ and $N_{i,j}$ depending on $\pi_{\infty}$ and $\boldsymbol{\kappa},$ such that 
\begin{align*}
L_v(s,\pi_{\boldsymbol{\kappa},v}\otimes\tau_v\times\widetilde{\pi}_{-\boldsymbol{\kappa},v})=Q_1(s,\boldsymbol{\kappa})\prod_{i=1}^r\prod_{j=1}^rL_v(s+\kappa_{i,j-1},\sigma_{v,i}\otimes\tau_v\times\widetilde{\sigma}_{v,j}),
\end{align*}
where $\Re(s)>\beta_{n_i,n_j}:=1-1/(n_i^2+1)-1/(n_j^2+1).$ Since each $\sigma_{v,j}$ is unitary, $L_v(s+\kappa_{i,j-1},\sigma_{v,i}\otimes\tau_v\times\widetilde{\sigma}_{v,j})$ is holomorphic when $\Re(s)>\beta_{n_i,n_j},$ then $Q_1(s,\boldsymbol{\kappa})$ is nonvanishing in $\Re(s)>\beta_{n,n}=1-2/(n^2+1),$ and each zero of $Q_1(s,\boldsymbol{\kappa})$ must be a pole of some  $L_v(s+\kappa_{i,j-1},\sigma_{v,i}\otimes\tau_v\times\widetilde{\sigma}_{v,j})$ (after meromorphic continuation), for some $1\leq i,j \leq r.$ Let $\mu_{p_i,q_j},$ $1\leq p_i\leq n_i,$ $1\leq q_j\leq n_j,$ be Satake parameters such that $L_v(s+\lambda_i-\lambda_j,\sigma_{v,i}\otimes\tau_v\times\widetilde{\sigma}_{v,j})=c_{1,i,j}\pi^{c_{2,i,j}s}\prod_{p_i}\prod_{q_j}\Gamma(s+\lambda_i-\lambda_j+\mu_{p_i,q_j}).$ Then $\Re(\mu_{p_i,q_j})\geq\beta_{n_i,n_j}.$ Then there exist constants $c_{\chi},$ nonnegative integers $m_{p_i,q_j}$ and exponents $e_{p_i,q_j}\in\mathbb{N}_{\geq 0}$ such that 
\begin{equation}\label{169_1}
Q_1(s,\boldsymbol{\kappa})=c_{\chi}\prod_{i=1}^r\prod_{j=1}^r\prod_{p_i=1}^{n_i}\prod_{q_j=1}^{n_j}(s+\lambda_i-\lambda_j+\mu_{p_i,q_j}+m_{p_i,q_j})^{e_{p_i,q_j}}.
\end{equation}
In conclusion, when $\Re(s)>1,$ $\Psi_v\left(s,W_{1,v},W_{2,v};\boldsymbol{\kappa},\Phi_{v,j_v}\right)$ is equal to product of the meromorphic function $Q_1(s,\boldsymbol{\kappa})\mathcal{H}_{v}(s,\boldsymbol{\kappa})$ and the meromorphic function 
\begin{align*}
\underset{\substack{1\leq i,j\leq n\\ i\neq j}}{\prod\prod}\frac{1}{L_v(1+\kappa_{i,j-1},\sigma_{v,i}\times\widetilde{\sigma}_{v,j})}\prod_{p=1}^r\prod_{q=1}^rL_v(s+\lambda_p-\lambda_q,\sigma_{v,p}\otimes\tau_v\times\widetilde{\sigma}_{v,q}),
\end{align*}
where $\mathcal{H}_{v}(s,\boldsymbol{\kappa})=\mathcal{H}_{v}(s,\lambda)$ defined just before \eqref{ar}, depending on $\pi_{v}$ and $\Phi_{v,j_{v}}.$ We thus obtain meromorphic continuation of $\Psi_v\left(s,W_{1,v},W_{2,v};\boldsymbol{\kappa},\Phi_{v,j_v}\right)$ to the whole complex plane. Now we identify $\Psi_v\left(s,W_{1,v},W_{2,v};\boldsymbol{\kappa},\Phi_{v,j_v}\right)$ with its continuation. Then by \eqref{169_1} and preceding analysis we have 
\begin{align*}
\Psi_v\left(s,W_{1,v},W_{2,v};\boldsymbol{\kappa},\Phi_{v,j_v}\right)=Q_2(s,\boldsymbol{\kappa})\prod_{p=1}^r\prod_{q=1}^rL_v(s+\kappa_{p,q-1},\sigma_{v,p}\otimes\tau_v\times\widetilde{\sigma}_{v,q}),
\end{align*}
where for any $i< j,$ $\kappa_{i,j-1}=\sum_{k=i}^{j-1}\langle e_k,\boldsymbol{\kappa}\rangle;$ and $Q_2(s,\boldsymbol{\kappa})$ is equal to the product of $c_{\chi}\mathcal{H}_{v}(s,\boldsymbol{\kappa})$ and the function 
\begin{align*}
\prod_{i'\in I}\prod_{j'\in J}\frac{1}{L_v(1+\kappa_{i',j'-1},\sigma_{v,i'}\times\widetilde{\sigma}_{v,j'})}\prod_{i=1}^r\prod_{j=1}^r\prod_{p_i=1}^{n_i}\prod_{q_j=1}^{n_j}(s_{i,j}+\mu'_{i,j,p_i,q_j})^{e_{p_i,q_j}},
\end{align*}
where $I$ and $J$ are some finite set of indexes; $s_{i,j}=s+\lambda_i-\lambda_j$ and $\mu'_{i,j,p_i,q_j}=\mu_{p_i,q_j}+m_{p_i,q_j}.$ One can check directly the type of residues in the proof of Theorem H in \cite{Yang19} to conclude that for $n\leq 4$ the function 
$$
\mathcal{H}_{v}(s,\boldsymbol{\kappa})\prod_{i'\in I}\prod_{j'\in J}{L_v(1+\kappa_{i',j'-1},\sigma_{v,i'}\times\widetilde{\sigma}_{v,j'})}^{-1}
$$ 
is entire as a function of $\boldsymbol{\kappa}^{\circ}$ and as a function of $s$ it is nonvanishing in $\Re(s)\geq c_0'>0$ for some absolute constant $c_0'.$ The existence of $c_0'$ comes from the fact that $\Re(\boldsymbol{\kappa})$ lies in the box $[-4,4]^{n-1}.$

Since $|\Re(\mu'_{i,j,p_i,q_j})|\leq \beta_{n_i,n_j},$ there exists some $c_0>0$ such that for any $s\geq c_0,$ $\big|Q_2(s,\boldsymbol{\kappa})\big|$ is bounded by the product of $|c_{\chi}\mathcal{H}_{v}(s,\boldsymbol{\kappa})|$ and 
\begin{align*}
\prod_{i'\in I}\prod_{j'\in J}\frac{1}{L_v(1+\kappa_{i',j'-1},\sigma_{v,i'}\times\widetilde{\sigma}_{v,j'})}\prod_{i=1}^r\prod_{j=1}^r\prod_{p_i=1}^{n_i}\prod_{q_j=1}^{n_j}\big|s_{i,j}+\mu'_{i,j,p_i,q_j}\big|^{e_{p_i,q_j}},
\end{align*} 
where $s_{i,j}=s+\kappa_{i,j,s}.$ Recall that we have restricted $s$ in a fixed compact set, then $\Im(s+\kappa_{i,j,s})\asymp\Im({\kappa}^{\circ}_{i,j}),$ where for any $i< j,$ $\kappa_{i,j-1}^{\circ}=\langle e_i,\boldsymbol{\kappa}^{\circ}\rangle+\cdots+\langle e_{j-1},\boldsymbol{\kappa}^{\circ}\rangle.$ Therefore, we can take $c_0$ to be large enough (depending only on the fixed neighborhood of $s$) to get that 
\begin{equation}\label{171'}
\big|Q_2(s,\boldsymbol{\kappa})\big|\leq \big|Q_2(\Re(s'),\boldsymbol{\kappa}^{\circ})\big|,
\end{equation} 
where $s'$ is any complex number such that $\Im(s')=\Im({\kappa}^{\circ}_{i,j})$ and $\Re(s')\geq c_0.$

Note that the function $c(s,\kappa_l)(c(s,\kappa_l)-1)\Lambda(c(s,\kappa_l),\sigma\otimes\tau\times\widetilde{\sigma}')$ is entire. Then by Phragm\'{e}n-Lindel\"{o}f principle and the functional equation we have 
\begin{align*}
&\big|c(s,\kappa_l)(c(s,\kappa_l)-1)\Lambda(c(s,\kappa_l),\sigma\otimes\tau\times\widetilde{\sigma}')\big|\\
\ll& C(\sigma\otimes\tau\times\widetilde{\sigma}';s_2)^{\alpha(s_2)}\big|c(s_2,\kappa_l)(c(s_2,\kappa_l)-1)\Lambda(c(s_2,\kappa_l),\sigma\otimes\tau\times\widetilde{\sigma}')\big|,
\end{align*}
where $s_2$ is the unique complex number such that $\Re(c(s_2,\kappa_l))= 2|\Re(c(s,\kappa_l))|+2$ and $\Im(c(s_2,\kappa_l))=\Im(c(s,\kappa_l));$ and $\alpha(s_2)$ is positive depending only on $\Re(s_2).$ 

Let $s\in B_{\epsilon}(s_0).$ Then by our definition $\min\{|c(s,\kappa_l)|,|c(s,\kappa_l)-1|\}\geq \epsilon,$ for any $\kappa_l\in \mathcal{C}_l.$ Then one has, for any $s\in B_{\epsilon}(s_0),$ that
\begin{align*}
\big|\Lambda(c(s,\kappa_l),\sigma\otimes\tau\times\widetilde{\sigma}')\big|
\ll_{\epsilon} C(\sigma\otimes\tau\times\widetilde{\sigma}';s_2)^{\alpha(s_2)+1}\big|L_{\infty}(c(s_2,\kappa_l),\sigma\otimes\tau\times\widetilde{\sigma}')\big|,
\end{align*}
since $L(c(s_2,\kappa_l),\sigma\otimes\tau\times\widetilde{\sigma}')\ll 1.$ Therefore, we then obtain that 
\begin{equation}\label{1,}
\big|\Lambda(c(s,\kappa_l),\sigma\otimes\tau\times\widetilde{\sigma}')\big|
\ll_{\epsilon} C(\sigma\otimes\tau\times\widetilde{\sigma}';s')^{-N}\big|L_{\infty}(s',\sigma\otimes\tau\times\widetilde{\sigma}')\big|,
\end{equation}
where $N>0$ is large and $s'$ is such that $\Re(s')=4N+2$ and $\Im(c(s',\kappa_l))=\Im(c(s,\kappa_l)).$ Moreover, the implied constant in \eqref{1,} is independent of $N.$ Note that $L(s',\sigma\otimes\tau\times\widetilde{\sigma}')\gg1.$ One can deduce from \eqref{1,} that 
\begin{equation}\label{170'}
\prod_{l\in\mathcal{L}}\big|\Lambda(c(s,\kappa_l),\sigma_l\otimes\tau\times\widetilde{\sigma}'_l)\big|
\ll_{\epsilon} \prod_{l\in\mathcal{L}}C(\sigma_l\otimes\tau\times\widetilde{\sigma}'_l;s')^{-N}\big|\Lambda(s',\sigma_l\otimes\tau\times\widetilde{\sigma}'_l)\big|.
\end{equation}
A similar argument applied to \eqref{154} implies that 
\begin{equation}\label{170}
\big|R_{\Sin}(s;\chi)\big|\cdot\prod_{l'\in\mathcal{L}'}\big|\Lambda(c_{l'}(s),\sigma_{l'}\otimes\tau\times\widetilde{\sigma}'_{l'})\big|\ll \prod_{l'\in\mathcal{L}'}\frac{\big|\Lambda(s',\sigma_{l'}\otimes\tau\times\widetilde{\sigma}'_{l'})\big|}{C(\sigma_{l'}\otimes\tau\times\widetilde{\sigma}'_{l'};s')^{N}},
\end{equation}
where $s\in B_{\epsilon}(s_0);$ the parameters $N$ and $s'$ are defined as in \eqref{170'}.
\medskip 

Let $S(\varphi,\Phi)$ be the finite set of nonarchimedean places such that both $\varphi_v$ and $\Phi_v=\Phi_v^{\circ}$ are the characteristic function of $G(\mathcal{O}_{F,v})$ outside $\Sigma_{F,\infty}\cup S(\pi,\Phi).$ Note that when $\varphi_v$ is $G(\mathcal{O}_{F,v})$-invariant, then $\pi_{v,\lambda}$ is unramified. So the cardinality of the finite set $S(\pi,\Phi)$ is bounded in terms of $\tau,$ $\Phi$ and the $K$-finite type of the test function $\varphi.$ Namely, there exists a finite set $S_{\varphi,\tau,\Phi}$ of prime ideals of the base field $F$ such that for any $\pi$ induced from some cuspidal datum $\chi\in\mathfrak{X}_P,$ one has $S(\pi,\Phi)\subseteq S_{\varphi,\tau,\Phi}.$ Let $\chi=(\sigma_1,\cdots,\sigma_r),$ where each $\sigma_j$ is a cuspidal representation of some $\GL(n_j,\mathbb{A}_F).$ By spectral expansion,  For each $v\in S_{\varphi,\tau,\Phi},$ every possible local component $\sigma_{j,v}$ has bounded conductor in terms of $K_{v}$-type of $\varphi_v,$ $\tau_v$ and $\Phi_v.$ Then there can only be finitely many (depending on $\varphi_v,$ $\tau_v$ and $\Phi_v$) such $\sigma_{j,v}$'s. Hence there are only finitely many possible $\pi_{v}$'s,  for any $v\in S_{\varphi,\tau,\Phi}.$ Moreover, for each $\pi_v,$ there are finitely many vectors in $\pi_v$ having the given $K_v$-type. Hence $\#\mathfrak{B}_{P,\chi,v}<\infty$ for each $v\in S_{\varphi,\tau,\Phi}.$ Denote by $D(v;\varphi_v,\tau_v,\Phi_v)$ the sum of all finitely many possible $\#\mathfrak{B}_{P,\chi,v}.$ Let $D_{\varphi,\tau,\Phi}$ be the product of $D(v;\varphi_v,\tau_v,\Phi_v)$ over $v\in S_{\varphi,\tau,\Phi}.$ Then $D_{\varphi,\tau,\Phi}$ is an integer depending only on $\tau$ and the test functions $\varphi$ and $\Phi.$ Note that for each $v\in S_{\varphi,\tau,\Phi},$ the local Whittaker function $W_{v}(x;\pi_v,\boldsymbol{\kappa})$ is dominated by a gauge $\xi_{\pi_v}$ uniformly in fixed strips $c_1\leq \Re(\boldsymbol{\kappa}_j)\leq c_2,$ $1\leq j\leq r-1.$ Hence we have
\begin{align*}
\Psi_v\left(s,W_{1,v},W_{2,v};\boldsymbol{\kappa}_s,\Phi_{v}\right)\ll \int_{N(F_v)\backslash G(F_v)}\Big|\xi_{\alpha}(x_v)\overline{\xi_{\beta}(x_v)}\Phi(\eta x_v)|\det x_v|_{F_v}^{\Re(s)}\Big|dx_v,
\end{align*}
which is finite since $\xi_{\alpha}$ and $\xi_{\beta}$ are Bruhat-Schwartz functions. Given $s\in \mathcal{S}_{(0,1)}\setminus \mathcal{S}^{ex},$ since there are only finitely many possible local Rankin-Selberg integrals $\Psi_v\left(s,W_{1,v},W_{2,v};\boldsymbol{\kappa}_s,\Phi_{v}\right),$ $v\in S_{\varphi,\tau,\Phi},$ and each is finite, we see that 
\begin{align*}
C_{\varphi,\tau,\Phi}:=\prod_{v\in S_{\varphi,\tau,\Phi}}\max_{\phi_{1,v}\in\mathfrak{B}_{P,\chi,v}}\max_{\phi_{2,v}\in\mathfrak{B}_{P,\chi,v}}\Big|\Psi_v\left(s,W_{1,v},W_{2,v};\boldsymbol{\kappa}_s,\Phi_{v}\right)\Big|<\infty.
\end{align*}
Since $\boldsymbol{\kappa}_s=\boldsymbol{\kappa}^{\circ}+\boldsymbol{\kappa}_s^{\circ},$ then $\langle\mathcal{I}_{P,v}(\boldsymbol{\kappa}_s,\varphi_v)\phi_{1,v},\phi_{2,v}\rangle_v=\langle\mathcal{I}_{P,v}(\boldsymbol{\kappa}^{\circ}+\boldsymbol{\kappa}_s^{\circ},\varphi_v)\phi_{1,v},\phi_{2,v}\rangle_v=\langle\mathcal{I}_{P,v}(\boldsymbol{\kappa}^{\circ},\varphi_ve^{(\boldsymbol{\kappa}_s^{\circ}+\rho_P)H_P})\phi_{1,v},\phi_{2,v}\rangle_v.$ Noting that $\varphi_ve^{(\boldsymbol{\kappa}_s^{\circ}+\rho_P)H_P}$ is Bruhat-Schwartz and the representation $\mathcal{I}_{P,v}(\boldsymbol{\kappa}^{\circ},\varphi_ve^{(\boldsymbol{\kappa}_s^{\circ}+\rho_P)H_P})$ is unitary, we have by trianGLe inequality that $|\langle\mathcal{I}_{P,v}(\boldsymbol{\kappa}_s,\varphi_v)\phi_{1,v},\phi_{2,v}\rangle_v|\leq \sqrt{\langle\phi_{1,v},\phi_{1,v}\rangle_v\langle\phi_{2,v},\phi_{2,v}\rangle_v}=1.$ Applying estimates for Satake parameters to the local Eulerian product we get that $|L(c(s,\kappa_l),\sigma_v\otimes\tau_v\times\sigma_v')|^{-1}\leq(1+N_{F/\mathbb{Q}}(\mathfrak{p})^{M_1})^{M_2},$ for some absolute positive constants $M_1$ and $M_2.$ Hence by definition of $R_v\left(s,W_{1,v},W_{2,v};\boldsymbol{\kappa}_s,\Phi_{v}\right)$ we have
\begin{equation}\label{171}
\prod_{v\in S_{\varphi,\tau,\Phi}}\big|\langle\mathcal{I}_{P,v}(\boldsymbol{\kappa}_s,\varphi_v)\phi_{1,v},\phi_{2,v}\rangle_vR_v\left(s,W_{1,v},W_{2,v};\boldsymbol{\kappa}_s,\Phi_{v}\right)\big|\ll_F C_{\varphi,\tau,\Phi},
\end{equation}
where the implied constant depends only on the base field $F.$ Since $c'(s,\kappa_l)\in 1-\mathcal{D}_{\chi},$ $\Re(c'(s,\kappa_l))>4/5.$ Then applying the upper bounds for Satake parameters we get $|L_v(c'(s,\kappa_l),\sigma\times\sigma')|\leq \prod\prod|1-N_{F/\mathbb{Q}}(\mathfrak{p})^{-1/2}|^{-1}<\infty.$ Therefore,
\begin{equation}\label{174}
\big|L_{S_{\varphi,\tau,\Phi}}(c'(s,\kappa_l),\sigma\times\sigma'))\big|=\prod_{v\in S_{\varphi,\tau,\Phi}}\big|L_v(c'(s,\kappa_l),\sigma\times\sigma'))\big|\ll1,
\end{equation}
where the implied constant depends only on $F,$ $\tau,$ $\varphi$ and $\Phi.$ 
\medskip

Let $\sigma$ and $\sigma'$ be cuspidal representations of $\GL(n_1,\mathbb{A}_F)$ and $\GL(n_1',\mathbb{A}_F),$ respectively. Since $|c(s,\kappa_l)|\geq \epsilon$ and $|c(s,\kappa)-1|\geq \epsilon,$ then by convexity bound,
\begin{equation}\label{173'}
L(c(s,\kappa_l),\sigma\otimes\tau\times\sigma')\ll_{F,\epsilon_0,\epsilon} C(\sigma\otimes\tau\times\sigma')^{\frac{1+\beta_{n_1,n_1'}-\Re(s)}{2}+\epsilon}.
\end{equation}
Also, since $c'(s,\kappa_l)\in 1-\mathcal{D}_{\chi},$ then we can apply the result in \cite{Lap13} to deduce 
\begin{equation}\label{175}
L(c'(s,\kappa_l),\sigma\times\sigma')\gg_{F,n} C(\sigma\times\sigma')^{-c_l}C(\sigma\times\sigma)^{-c_l}C(\sigma'\times\sigma')^{-c_l},
\end{equation}
where $c_l>0$ is absolute. Denote  $R(s;\chi)\underset{\kappa_{j_m}=\delta_m(s)}{\Res}\cdots\underset{\kappa_{j_1}=\delta_1(s)}{\Res}\Psi(s,W_1,W_2;\boldsymbol{\kappa})$ by $\Res\Psi_{12}(s).$ Gather \eqref{157}, \eqref{163}, \eqref{169}, \eqref{170'}, \eqref{170}, \eqref{171'}, \eqref{174}, \eqref{173'} and \eqref{175} to obtain
\begin{equation}\label{174'}
\big|\Res\Psi_{12}(s)\big|\ll_{F,\epsilon_0,\epsilon}C_{\varphi,\tau,\Phi}\sum_{\boldsymbol{j}\in J}\Big|\prod_{v\mid\infty}\Psi_v\left(s',W_{1,v},W_{2,v};\boldsymbol{\kappa}^{\circ},\Phi_{v,j_v}\right)\Big|\cdot H_{\chi}(\boldsymbol{\kappa}^{\circ}),
\end{equation}
where $H_{\chi}(\boldsymbol{\kappa}^{\circ})$ is a function depending on $\chi$ and $\boldsymbol{\kappa}^{\circ},$ and it is defined by
\begin{align*}
H_{\chi}(\boldsymbol{\kappa}^{\circ})=\frac{\prod_{k=1}^r\prod_{j=1}^{r-1}\prod_{i=1}^j[C(\sigma_k\otimes\tau\times\widetilde{\sigma}_{k})C(\sigma_i\otimes\tau\times\widetilde{\sigma}_{j+1})C(\sigma_{j+1}\otimes \tau\times\widetilde{\sigma}_{i})]^{N}}{\prod_{q=1}^{r-1}\prod_{p=1}^q\big|L(1+\kappa_{p,q}^{\circ},\sigma_{p}\times\widetilde{\sigma}_{q+1})L(1-\kappa_{p,q}^{\circ},\sigma_{q+1}\times\widetilde{\sigma}_{p})\big|},
\end{align*}
where $N$ is an absolute constant. Let $s_0'> 4N+1$ be a large enough (depending at most possibly on $\epsilon$) real number. Then substituting Stirling formula into the estimate \eqref{174'} we have that 
\begin{equation}\label{176}
\big|\Res\Psi_{12}(s)\big|\ll_{\epsilon} \frac{C_{\varphi,\tau,\Phi}\sum_{\boldsymbol{j}\in J}\big|\prod_{v\mid\infty}\Psi_v\left(s_0',W_{1,v},W_{2,v};\boldsymbol{\kappa}^{\circ},\Phi_{v,j_v}\right)\big|}{\prod_{j=1}^{r-1}\prod_{i=1}^j\big|L(1+\kappa_{i,j}^{\circ},\sigma_{i}\times\widetilde{\sigma}_{j+1})L(1-\kappa_{i,j}^{\circ},\sigma_{j+1}\times\widetilde{\sigma}_{i})\big|}.
\end{equation}
Since $L_S(s',\sigma\times\sigma')=\prod_{v\in S_{\varphi,\tau,\Phi}}L_v(s',\sigma_v\times\sigma_v')\gg1$ when $\Re(s')>4/5,$ where the implied constant is absolute, then from \eqref{176} we deduce that 
 \begin{equation}\label{177}
 \big|\Res\Psi\big|\ll_{\epsilon} \frac{C_{\varphi,\tau,\Phi}\sum_{\boldsymbol{j}\in J}\big|\prod_{v\mid\infty}\Psi_v\left(s_0',W_{1,v},W_{2,v};\boldsymbol{\kappa}^{\circ},\Phi_{v,j_v}\right)\big|}{\prod_{j=1}^{r-1}\prod_{i=1}^j\big|L^S(1+\kappa_{i,j}^{\circ},\sigma_{i}\times\widetilde{\sigma}_{j+1})L^S(1-\kappa_{i,j}^{\circ},\sigma_{j+1}\times\widetilde{\sigma}_{i})\big|},
 \end{equation}
where $\Res\Psi=\Res\Psi_{12}(s)$ and $L^S(s',\sigma\times\sigma')=\prod_{v\notin \Sigma_{F,\infty}\cup S_{\varphi,\tau,\Phi}}L_v(s',\sigma_v\times\sigma_v')$ is the partial $L$-function, and the implied constant in \eqref{177} depends only on $F$ and $\epsilon.$
\medskip 

On the other hand, for any $v\in S_{\varphi,\tau,\Phi},$ by Lemma \ref{60''}, there exists some $\phi_v^{\circ}\in \mathcal{H}(\sigma_{1,v},\cdots, \sigma_{r,v})$ such that $W(e; \phi_v^{\circ},\boldsymbol{\kappa})\neq0,$ for any $\boldsymbol{\kappa}\in i\mathfrak{a}_P^*/i\mathfrak{a}_G^*.$ Since $\Phi_v$ is a Schwartz-Bruhat function, we can write $\Phi_v$ as a finite sum of $\Phi_{v,l},$ where each $\Phi_{v,l}$ is a constant multiplying a characteristic function of some connected compact subset of $F_v^n.$ Then the Fourier transform of $\Phi_{v,l}$ is of the same form. Let the integral $\Psi_{v}^{*}\left(s',W_{v}^{\circ},W_{v}^{\circ};\boldsymbol{\kappa},\Phi_{v,l}\right)$ be defined by
\begin{align*}
\int_{N(F_v)\backslash G(F_v)} W(x_v; \phi_v^{\circ},\boldsymbol{\kappa})\overline{W(x_v; \phi_v^{\circ},-\bar{\boldsymbol{\kappa}})}\cdot\Phi_{v,l}(\eta x_v)|\det x_v|_{F_v}^{s'}dx_v.
\end{align*}
If $\Psi_{v}^{*}\left(s',W_{v}^{\circ},W_{v}^{\circ};\boldsymbol{\kappa},\Phi_{v,l}\right)=0$ for some $\boldsymbol{\kappa}\in i\mathfrak{a}_P^*/i\mathfrak{a}_G^*$ and some $s'>3,$ then $0=|\Psi_{v}^{*}\left(s',W_{v}^{\circ},W_{v}^{\circ};\boldsymbol{\kappa},\Phi_{v,l}\right)|=\Psi_{v}^{*}\left(s',W_{v}^{\circ},W_{v}^{\circ};\boldsymbol{\kappa},|\Phi_{v,l}|\right),$ which amounts to that 
\begin{align*}
\int_{N(F_v)\backslash G(F_v)} \big|W(x_v; \phi_v^{\circ},\boldsymbol{\kappa})\big|^2\cdot|\Phi_{v,l}(\eta x_v)|\cdot|\det x_v|_{F_v}^{s'}dx_v=0.
\end{align*}
Since $W(x_v; \phi_v^{\circ},\boldsymbol{\kappa})$ is a continuous function of $x_v,$ then $W(x_v; \phi_v^{\circ},\boldsymbol{\kappa})=0$ for any $x_v.$ In particular, $W(e; \phi_v^{\circ},\boldsymbol{\kappa})=0,$ which is a contradiction. Hence, one sees that  $\Psi_{v}^{*}\left(s',W_{v}^{\circ},W_{v}^{\circ};\boldsymbol{\kappa},\Phi_{v,l}\right)\neq0$ for any $\boldsymbol{\kappa}\in i\mathfrak{a}_P^*/i\mathfrak{a}_G^*$ and any $s'>3.$ Note that by Proposition 43 in \cite{Yang19}, for any $s'>3,$ we have
\begin{align*}
\frac{\Psi_v\left(s',W_{v}^{\circ},W_{v}^{\circ};\boldsymbol{\kappa},\Phi_{v,l}\right)}{L_v(s',\pi_{\boldsymbol{\kappa},v}\otimes\tau_v\times\widetilde{\pi}_{-\boldsymbol{\kappa},v})}\in \mathbb{C}[q_v^{s'}, q_v^{-s'}, q_v^{\lambda_i}, q_v^{-\lambda_i}:\ 1\leq i\leq r].
\end{align*}

Then for fix $s'>3$ and for any $\boldsymbol{\kappa},$  $\Psi_v\left(s',W_{v}^{\circ},W_{v}^{\circ};\boldsymbol{\kappa},\Phi_{v,l}\right)L_v(s',\pi_{\boldsymbol{\kappa},v}\otimes\tau_v\times\widetilde{\pi}_{-\boldsymbol{\kappa},v})^{-1}$ is a polynomial nonvanishing in a compact domain. Then there exists a positive constant $C_{v,s'}'=C(s';\varphi_v,\Phi_v,\tau_v)$ such that for any $\boldsymbol{\kappa},$ one has  $|\Psi_v\left(s',W_{v}^{\circ},W_{v}^{\circ};\boldsymbol{\kappa},\Phi_{v,l}\right)L_v(s',\pi_{\boldsymbol{\kappa},v}\otimes\tau_v\times\widetilde{\pi}_{-\boldsymbol{\kappa},v})^{-1}|\geq C_{v,s'}'.$ Since $s'>3,$ we have 
\begin{align*}
\big|L_v(s',\pi_{\boldsymbol{\kappa},v}\otimes\tau_v\times\widetilde{\pi}_{-\boldsymbol{\kappa},v})\big|&\geq \prod_{i=1}^r\prod_{j=1}^r\prod_{k=1}^{n_i}\prod_{l=1}^{n_j}\Big|1+|St_{\sigma\times \sigma',k,l}(\mathfrak{p})|\cdot N_{F/\mathbb{Q}}(\mathfrak{p})^{-s'}\Big|^{-1}\\
&\geq \prod_{i=1}^r\prod_{j=1}^r\Big|1+N_{F/\mathbb{Q}}(\mathfrak{p})^{-s'+1-(1+n_i^2)^{-1}-(1+n_j^2)^{-1}}\Big|^{-2n},
\end{align*}
where the right hand side is larger than $\prod_{i=1}^r\prod_{j=1}^re^{-2n\sum_pp^{-2}}\geq e^{-4nr^2}.$ Let $C_{v,s'}=e^{-4nr^2}C_{v,s'}'.$ Then we have that $|\Psi_v\left(s,W_{v}^{\circ},W_{v}^{\circ};\boldsymbol{\kappa},\Phi_{v,l}\right)|\geq C_v>0,$ for any $v\in S_{\varphi,\tau,\Phi}.$ Let $C_{\varphi,\tau,\Phi}^{\circ}(s')$ be the product of $C_{v,s'}$ over $v\in S_{\varphi,\tau,\Phi}.$ Denote by $\Psi_{S_{\varphi,\tau,\Phi}}\left(s',W_{v}^{\circ},W_{v}^{\circ};\boldsymbol{\kappa},\Phi_{v,l}\right)$ the product of local Rankin-Selberg integrals $\Psi_{v}\left(s',W_{v}^{\circ},W_{v}^{\circ};\boldsymbol{\kappa},\Phi_{v,l}\right)$ over $v\in S_{\varphi,\tau,\Phi}.$ Then 
\begin{equation}\label{172'}
\big|\Psi_{S_{\varphi,\tau,\Phi}}\left(s',W_{v}^{\circ},W_{v}^{\circ};\boldsymbol{\kappa},\Phi_{v,l}\right)\big|\geq C_{\varphi,\tau,\Phi}^{\circ}(s')>0.
\end{equation}

For each $v\in S_{\varphi,\tau,\Phi},$ let $\varphi_v^{\circ}$ be a fundamental idempotent with respect to a small compact subgroup such that $\phi_v^{\circ}$ is right $\supp \varphi_v^{\circ}$-invariant. Then $\mathcal{I}_{P,v}(\boldsymbol{\kappa},\varphi_v^{\circ})\phi_{v}^{\circ}=\phi_{v}^{\circ}.$ Hence we get  $\langle\mathcal{I}_{P,v}(\boldsymbol{\kappa},\varphi_v^{\circ})\phi_{v}^{\circ},\phi_{v}^{\circ}\rangle_v=\langle\phi_{v}^{\circ},\phi_{v}^{\circ}\rangle_v=1.$ Therefore, by \eqref{172'},
\begin{equation}\label{173}
\prod_{v\in S_{\varphi,\tau,\Phi}}\big|\langle\mathcal{I}_{P,v}(\boldsymbol{\kappa},\varphi_v^{\circ})\phi_{v}^{\circ},\phi_{v}^{\circ}\rangle_v\Psi_v\left(s',W_{v}^{\circ},W_{v}^{\circ};\boldsymbol{\kappa},\Phi_{v,l}\right)\big|\geq C_{\varphi,\tau,\Phi}^{\circ}(s')>0.
\end{equation}

When $v$ is a nonarchimedean place and $v\notin S_{\varphi,\tau,\Phi},$ then each $\pi_v$ is unramified and $\Phi_v$ is the characteristic function of $G(\mathcal{O}_{F_v}).$ Then by Proposition 43 in loc. cit., when $\Re(s')>1,$ the local Rankin-Selberg integral $\Psi_v\left(s',W_{1,v},W_{2,v};\boldsymbol{\kappa},\Phi_{v}\right)$ is equal to the product $\prod_{k=1}^rL_v(s',\sigma_{k,v}\otimes\tau_{v}\times\widetilde{\sigma}_{k,v})$ multiplying 
\begin{align*}
\prod_{j=1}^{r-1}\prod_{i=1}^j\frac{L_v(s'+\kappa_{i,j},\sigma_{i,v}\otimes\tau_v\times\widetilde{\sigma}_{j+1,,v})L_v(s'-\kappa_{i,j},\sigma_{j+1,,v}\otimes\tau_v\times\widetilde{\sigma}_{i,v})}{L_v(1+\kappa_{i,j},\sigma_{i,v}\times\widetilde{\sigma}_{j+1,v})L_v(1-\kappa_{i,j},\sigma_{j+1,v}\times\widetilde{\sigma}_{i,v})}.
\end{align*}

Let $\Re(s')>3$ and $\boldsymbol{\kappa}\in i\mathfrak{a}_P^*/i\mathfrak{a}_G^*.$ Denote by the partial Rankin-Selberg integral  $\Psi^{S}\left(s',W_{1},W_{2};\boldsymbol{\kappa},\Phi\right)$ the product of each local integral $\Psi_v\left(s',W_{1,v},W_{2,v};\boldsymbol{\kappa},\Phi_{v}\right),$ where $v$ is a nonarchimedean place and $v\notin S_{\varphi,\tau,\Phi}.$ Similarly we define the partial L-function $L^S(s',\sigma\times\sigma').$ Then for any cuspidal representations $\sigma$ (resp. $\sigma'$) of $\GL(n_1,\mathbb{A}_F)$ (resp. $\GL(n_1',\mathbb{A}_F)$), we have 
\begin{align*}
\big|L^S(s',\sigma\otimes\tau\times\sigma')\big|&=\prod_{\mathfrak{p}\notin S_{\varphi,\tau,\Phi}}\prod_{k=1}^{n_1}\prod_{l=1}^{n_1'}\Big|1-St_{\sigma\otimes\tau\times \sigma',k,l}(\mathfrak{p})N_{F/\mathbb{Q}}(\mathfrak{p})^{-s'}\Big|^{-1}\\
&\geq \prod_{\mathfrak{p}\notin S_{\varphi,\tau,\Phi}}\prod_{k=1}^{n_1}\prod_{l=1}^{n_1'}\Big|1+N_{F/\mathbb{Q}}(\mathfrak{p})^{-\Re(s')+1-(1+n_1^2)^{-1}+(1+n_1'^2)^{-1}}\Big|^{-1}\\
&\geq \prod_{p}\Big|1+p^{-2d}\Big|^{-n_1-n_1'}\geq e^{-(n_1+n_1')\sum_{p}p^{-2d}}\geq e^{-2(n_1+n_1')},
\end{align*}
where $d=[F:\mathbb{Q}].$ Therefore, for any $\phi_1, \phi_2\in \mathfrak{B}_{P,\chi},$ we have that 
\begin{equation}\label{172}
\big|\Psi^{S}_{12}(s')\big|\gg \prod_{j=1}^{r-1}\prod_{i=1}^j \big|L^S(1+\kappa_{i,j},\sigma_{i}\times\widetilde{\sigma}_{j+1})L^S(1-\kappa_{i,j},\sigma_{j+1}\times\widetilde{\sigma}_{i})\big|^{-1},
\end{equation}
where $\Psi^{S}_{12}(s')=\Psi^{S}\left(s',W_{1},W_{2};\boldsymbol{\kappa},\Phi\right)$ and the implied constant depends only on $n.$
\medskip 

Let $\Res\mathcal{F}(s)=R(s;\chi)\underset{\kappa_{j_m}=\delta_m(s)}{\Res}\cdots\underset{\kappa_{j_1}=\delta_1(s)}{\Res}\mathcal{F}(\boldsymbol{\kappa};s).$ Now combining \eqref{177}, \eqref{173} and \eqref{172} we then obtain that 
\begin{align*}
\frac{\Res\mathcal{F}(s)}{C_{\varphi,\tau,\Phi}}&\ll\sum_{\boldsymbol{j}\in J}\sum_{\phi_1'}\frac{\big|\langle\mathcal{I}_P(\boldsymbol{\kappa}^{\circ}+\boldsymbol{\kappa}_s^{\circ},\varphi)\phi_1,\phi_2\rangle\prod_{v\mid\infty}\Psi_v\left(s_0',W_{1,v},W_{2,v};\boldsymbol{\kappa}^{\circ},\Phi_{v,j_v}\right)\big|}{\prod_{j=1}^{r-1}\prod_{i=1}^j\big|L^S(1+\kappa_{i,j},\sigma_{i}\times\widetilde{\sigma}_{j+1})L^S(1-\kappa_{i,j},\sigma_{j+1}\times\widetilde{\sigma}_{i})\big|}\\
&\ll\sum_{\boldsymbol{j}\in J}\sum_{\phi_1'}\Big|\langle\mathcal{I}_P(\boldsymbol{\kappa}^{\circ}+\boldsymbol{\kappa}_s^{\circ},\varphi)\phi_1,\phi_2\rangle\Psi^{S_{\varphi,\tau,\Phi}}\left(s',W_{1},W_{2};\boldsymbol{\kappa},\Phi\right)\Big|\\
&\ll\sum_{\boldsymbol{j}\in J}\sum_{\phi_1'}\Big|\frac{\langle\mathcal{I}_P(\boldsymbol{\kappa}^{\circ}+\boldsymbol{\kappa}_s^{\circ},\varphi)\phi_1,\phi_2\rangle\Psi^{S_{\varphi,\tau,\Phi}}\left(s',W_{1},W_{2};\boldsymbol{\kappa},\Phi\right)}{C_{\varphi,\tau,\Phi}^{\circ}(s')\Psi_{S_{\varphi,\tau,\Phi}}\left(s',W_{v}^{\circ},W_{v}^{\circ};\boldsymbol{\kappa},\Phi_{v,l}\right)^{-1}}\Big|\\
&\ll\sum_{\boldsymbol{j}\in J}\sum_{\phi_1}\Big|\langle\mathcal{I}_P(\boldsymbol{\kappa}^{\circ}+\boldsymbol{\kappa}_s^{\circ},\varphi)\phi_1,\phi_2\rangle\Psi\left(s',W_{1},W_{2};\boldsymbol{\kappa},\Phi\right)\Big|,
\end{align*}
where $\phi_1'$ runs over $\mathcal{B}$ such that $\phi_{1,v}=\phi_v^{\circ},$ $v\in S_{\varphi,\tau,\Phi}.$ Now Theorem \ref{58''} follows from Corollary 41 in \cite{Yang19}.
\end{proof}

\begin{proof}[Proof of Theorem \ref{78}]
According to Theorem H in loc. cit. the function $(s-1/2)\cdot \mathcal{I}_{m,\chi}(s)\cdot\Lambda(s,\tau)^{-1}$ is holomorphic in the region $\mathcal{S}_{(1/3,\infty)}$ for each $0\leq m\leq r-1.$ Invoking the computation in the appendix of loc. cit. with Theorem G in loc. cit. and Theorem \ref{58''} we see that 
$$
\widetilde{\mathcal{Z}}_m(s)=(s-1/2)(s-1)^n\cdot\sum_P\frac1{c_P}\sum_{\chi\in\mathfrak{X}_P}\sum_{\phi\in \mathfrak{B}_{P,\chi}}{\widetilde{\mathcal{I}}_{m,\chi}(s)}\cdot{\Lambda(s,\tau)}^{-1}
$$
converges locally normally in the region $\mathcal{S}_{(1/3,\infty)}\setminus \{s:\ \Re(s)=1/2,\cdots,(n-1)/n,1\}.$ Let $s_0$ be such that $\Re(s_0)=\beta,$ where $\beta\in\{1/2,\cdots,(n-1)/n,1\}.$ Let $\epsilon>0$ be sufficiently small. Let $U_{\epsilon}(s_0)=\{s:\ |s-s_0|<\epsilon\}.$ We shall prove that $\widetilde{\mathcal{Z}}_m(s)$ converges uniformly in the region $U_{\epsilon}(s_0),$ which follows clearly from Corollary 41 in loc. cit. and the following Claim \ref{80.}.
\end{proof}
\begin{claim}\label{80.}
	Let $s\in U_{\epsilon}(s_0).$ Then $(s-1)^n\widetilde{\mathcal{I}}_{m,\chi}(s)\cdot{\Lambda(s,\tau)}^{-1}$ is bounded uniformly by a finite sum of $\big|\langle\mathcal{I}_P(\boldsymbol{\kappa},\varphi)\phi,\phi\rangle\Psi\left(s',W,W;\boldsymbol{\kappa},\Phi\right)\big|,$ where $\Re(s')=\Re(s_0)$ and $\Re(s')$ is large (depending on $s_0$), and the sum depends only on the test functions $\varphi$ and $\Phi.$ 
\end{claim}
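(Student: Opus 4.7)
The plan is to adapt the estimates developed in the proof of Theorem \ref{58''} to the shrinking neighborhood $U_{\epsilon}(s_0)$ and then absorb the two auxiliary factors $(s-1)^n$ and $\Lambda(s,\tau)^{-1}$. First I would appeal to Theorem H of \cite{Yang19} to write, in a suitable truncation of contours $\mathcal{C}_{\chi}(\boldsymbol{\epsilon})$ near $U_{\epsilon}(s_0)$, the meromorphic continuation $\widetilde{\mathcal{I}}_{m,\chi}(s)$ as a finite sum of iterated residues of $\mathcal{F}(\boldsymbol{\kappa};s)=R_{\varphi}(s,\boldsymbol{\kappa};\phi)\Lambda(s,\pi_{\boldsymbol{\kappa}}\otimes\tau\times\widetilde{\pi}_{-\boldsymbol{\kappa}})$, coming from crossing poles $\kappa_{j_k}=\delta_k(s)$ of the shape $\delta_k(s)=\pm(s-1)+\text{const}$. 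A direct inspection of the computation in the appendix of \cite{Yang19} shows that each iterated residue is of the nice form \eqref{192.} with respect to $\chi$; in particular the numerator $L$-factors $\Lambda(c(s,\kappa_l),\sigma_l\otimes\tau\times\widetilde{\sigma}_l')$ surviving the residue process encounter a possible pole only when $c(s,\delta_k(s))\in\{0,1\}$, which corresponds precisely to the exceptional set $\{1/2,2/3,\ldots,(n-1)/n,1\}$.

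Next I would invoke Theorem \ref{58''} pointwise. For each fixed $s$ in $U_{\epsilon}(s_0)\setminus\{s_0\}$, the iterated residue $\Res \mathcal{F}(s)$ is majorized, by the chain of inequalities \eqref{163}--\eqref{177} in the proof of Theorem \ref{58''}, by
\begin{align*}
C_{\varphi,\tau,\Phi}\sum_{\boldsymbol{j}\in J}\sum_{\phi_1'}\big|\langle\mathcal{I}_P(\boldsymbol{\kappa}^{\circ}+\boldsymbol{\kappa}_s^{\circ},\varphi)\phi_1,\phi_2\rangle\Psi\left(s',W_1,W_2;\boldsymbol{\kappa},\Phi\right)\big|,
\end{align*}
where $s'$ is a fixed large real number (depending only on $s_0$) and the constants are independent of $\chi$. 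The contour integrations over the bounded pieces of $\mathcal{C}_{\chi}(\boldsymbol{\epsilon})$ that remain after taking residues are uniform in $s\in U_{\epsilon}(s_0)$, so they propagate the estimate, yielding a bound on $\widetilde{\mathcal{I}}_{m,\chi}(s)$ of exactly the right shape but multiplied by possible singular factors $(s-1/2)^{-1}\cdots(s-1)^{-n}$ emanating from the residues crossing $c(s,\delta_k(s))\in\{0,1\}$.

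It remains to absorb the prefactor $(s-1)^n\Lambda(s,\tau)^{-1}$. The singularity of $\widetilde{\mathcal{I}}_{m,\chi}(s)$ at each $s=k/(k+1)$, $2\leq k\leq n-1$, is simple by Theorem H and is killed by the numerator $\Lambda(ns,\tau^n)$-type factor appearing in $\Lambda(s,\tau)^{-1}$ (or by the fact that we work in the complement of these points inside the zero-free region $\mathcal{R}(1/2;\tau)^-$, where $|\Lambda(s,\tau)^{-1}|$ is bounded by the Brumley-Lapid estimates recorded after \eqref{A}); similarly the simple pole at $s=1/2$ is absorbed. At $s_0=1$ the cumulative order of the pole is at most $n$, coming from the simple pole of $\Lambda(s,\pi_{\boldsymbol{\kappa}}\otimes\tau\times\widetilde{\pi}_{-\boldsymbol{\kappa}})/\Lambda(s,\tau)$ in up to $n-1$ residue steps together with the simple pole of the remaining Rankin-Selberg factor, and is precisely cancelled by $(s-1)^n$. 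Finally, Corollary 41 in \cite{Yang19} allows one to replace $\phi_1'$ (arbitrary in $\mathfrak{B}_{P,\chi}$) by a uniform choice $\phi=\phi_1=\phi_2$ and $W_1=W_2=W$, producing the required finite sum.

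The main obstacle is the uniformity at $s=1$, where the residue of $\Lambda(s,\pi_{\boldsymbol{\kappa}}\otimes\tau\times\widetilde{\pi}_{-\boldsymbol{\kappa}})$ enters each residue step and must be matched factor-by-factor against $\Lambda(s,\tau)^{-1}$ and $(s-1)^n$; keeping track of the exact order of this pole across all partitions $P$ and all cuspidal data $\chi\in\mathfrak{X}_P$ is the delicate accounting. Everything else is a routine transcription of the bounds \eqref{161'}, \eqref{161}, \eqref{174'}, \eqref{177} and \eqref{172} under the restriction that $s$ varies in the compact set $\overline{U_{\epsilon}(s_0)}$, on which all implied constants depend only on $\varphi$, $\Phi$, $\tau$ and $s_0$.
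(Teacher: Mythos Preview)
Your sketch captures the overall architecture --- reduce to iterated residues of nice type, feed them into the machinery of Theorem \ref{58''}, and absorb the remaining singular factors --- but it has a genuine gap in the uniformity step, and the paper's proof addresses precisely this gap via an ingredient you do not use.

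The problem is your sentence ``For each fixed $s$ in $U_{\epsilon}(s_0)\setminus\{s_0\}$, the iterated residue $\Res\mathcal{F}(s)$ is majorized, by the chain of inequalities \eqref{163}--\eqref{177}\ldots''. Those inequalities carry an implied constant $\ll_{\epsilon}$ where $\epsilon$ is chosen so that $\min\{|c(s,\kappa_l)|,|c(s,\kappa_l)-1|\}\geq\epsilon$ for all $\kappa_l$ on the contour. Since $s_0$ lies on one of the critical lines $\Re(s)=\beta\in\{1/2,\ldots,(n-1)/n,1\}$, as $s\to s_0$ the quantity $c(s,\kappa_l)$ can approach $\{0,1\}$ for some $\kappa_l$ on $\mathcal{C}_{\chi}$, so this $\epsilon$ collapses to zero and the bound blows up. Your proposed remedy --- cancel the resulting singular factors by $(s-1)^n\Lambda(s,\tau)^{-1}$ --- does not work, because the singularities are not isolated at the points $s=k/(k+1)$: they occur along entire segments of the contour where $c(s,\kappa_l)\in\{0,1\}$. (Also, there is no ``$\Lambda(ns,\tau^n)$-type factor in $\Lambda(s,\tau)^{-1}$''; that sentence is simply incorrect.)

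The paper fixes this with a $\chi$-dependent trisection of $U_{\epsilon}(s_0)$ according to the zero-free region $\mathcal{R}_{\chi}(\beta)$. The key observation is that the contours $\mathcal{C}_{\chi}(\boldsymbol{\epsilon})$ and the residue locations $\delta_k(s)$ are built from the explicit zero-free region \eqref{A}--\eqref{B}, so one obtains not a fixed lower bound but
\[
\min\{|s\pm\kappa_{i,j}|,|s\pm\kappa_{i,j}-1|\}\gg_{\tau} C(\sigma_i\times\sigma_j;s_0)^{-N},\qquad
\min\{|c(s,\kappa_l)|,|c(s,\kappa_l)-1|\}\gg_{\tau} C(\sigma_l\times\sigma_l')^{-N},
\]
with $N$ absolute. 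This polynomial-in-conductor loss is then absorbed by shifting $s'$ far to the right (the majorant $\Psi(s',\cdot)$ decays like a large negative power of the conductor). That is the missing mechanism: replacing the fixed $\epsilon$ by a conductor-dependent $\epsilon(\chi)\sim C(\chi)^{-N}$ and paying for it in the choice of $s'$. Without it, your argument gives a pointwise bound that is not summable over $\chi$ uniformly in $s\in U_{\epsilon}(s_0)$.
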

\begin{proof}[Proof of Claim \ref{80.}]
Recall that for $\chi\in\mathfrak{X}_P$ and $\beta\in\mathbb{R},$ we set $\mathcal{R}_{\chi}(\beta):=(\beta-\mathcal{D}_{\chi}(\boldsymbol{\epsilon}))\cup(\beta-\mathcal{D}_{\chi}(\boldsymbol{\epsilon})).$ Then there are only finitely many $\chi$ such that $\mathcal{R}_{\chi}(1)\supseteq U_{\epsilon}(s_0).$ The contribution from these $\chi$'s is clearly convergent uniformly. Let $\chi$ be such that $\mathcal{R}_{\chi}(\beta)\nsupseteq U_{\epsilon}(s_0).$ Then we can divide $U_{\epsilon}(s_0)$ into three parts $U_{\epsilon}(s_0)^-\cup U_{\epsilon}(s_0)^0\cup U_{\epsilon}(s_0)^+,$ where $U_{\epsilon}(s_0)^0=U_{\epsilon}(s_0)\cap \mathcal{R}_{\chi}(\beta),$ $\mathcal{R}_{\chi}(\beta)^-=\left(\mathcal{R}_{\chi}(\beta)\setminus \mathcal{R}_{\chi}(\beta)\right)\cap \mathcal{S}_{(0,\beta)},$ and $\mathcal{R}_{\chi}(\beta)^-=\left(\mathcal{R}_{\chi}(\beta)\setminus \mathcal{R}_{\chi}(\beta)\right)\cap \mathcal{S}_{(\beta,2)}.$ 

Note that by \eqref{1} we see that  $R(s,W_1, W_2;\boldsymbol{\kappa},\phi)\Lambda(s,\pi_{\boldsymbol{\kappa}}\otimes\tau\times\widetilde{\pi}_{-\boldsymbol{\kappa}})$ is equal to $F(s,\boldsymbol{\kappa};\chi)\mathcal{G}(\boldsymbol{\kappa};s,P,\chi),$ where $\mathcal{G}(\boldsymbol{\kappa};s,P,\chi)$ is defined to be  
\begin{align*}
\prod_{k=1}^r\Lambda(s,\sigma_k\otimes\tau\times\widetilde{\sigma}_k)\prod_{j=1}^{r-1}\prod_{i=1}^j\frac{\Lambda(s+\kappa_{i,j},\sigma_i\otimes\tau\times\widetilde{\sigma}_{j+1})\Lambda(s-\kappa_{i,j},\sigma_{j+1}\otimes\tau\times\widetilde{\sigma}_{i})}{\Lambda(1+\kappa_{i,j},\sigma_{i}\times\widetilde{\sigma}_{j+1})\Lambda(1-\kappa_{i,j},\sigma_{j+1}\times\widetilde{\sigma}_{i})}.
\end{align*}
Then $\underset{\kappa_{j_m}=\delta_m(s)}{\Res}\cdots\underset{\kappa_{j_1}=\delta_1(s)}{\Res}R(s,W_1, W_2;\boldsymbol{\kappa},\phi)\Lambda(s,\pi_{\boldsymbol{\kappa}}\otimes\tau\times\widetilde{\pi}_{-\boldsymbol{\kappa}})$ is equal to the function $F(s,\boldsymbol{\kappa}_s;\chi)\underset{\kappa_{j_m}=\delta_m(s)}{\Res}\cdots\underset{\kappa_{j_1}=\delta_1(s)}{\Res}\mathcal{G}(\boldsymbol{\kappa};s,P,\chi),$ where $\boldsymbol{\kappa}_s=(\kappa_1,\cdots,\kappa_{r-1})$ with $\kappa_{j}=\delta_j(s),$ $j=j_1,\cdots,j_m.$ It follows from the proof of Theorem \ref{47'} (resp.  Theorem \ref{58''}) that $|F(s,\boldsymbol{\kappa};\chi)|\ll_{\epsilon} |F(s',\boldsymbol{\kappa};\chi)|$ (resp. $|F(s,\boldsymbol{\kappa}_s;\chi)|\ll_{\epsilon} F(s',\boldsymbol{\kappa}^{\circ};\chi)$) for $s'$ such that $\Im(s')=\Im(s_0)$ and $\Re(s')$ is large enough. 

Also, by the definition of $\widetilde{\mathcal{I}}_{m,\chi}(s)$ we see that $\underset{\kappa_{j_m}=\delta_m(s)}{\Res}\cdots\underset{\kappa_{j_1}=\delta_1(s)}{\Res}\mathcal{G}(\boldsymbol{\kappa};s,P,\chi)$ is of the form \eqref{192.} and $(\delta_1(s),\cdots,\delta_{m}(s))$ is $nice$ $with$ $respect$ $to$ $\chi\in\mathfrak{X}_P.$ Let $s\in \mathcal{R}_{\chi}(\beta)^0,$ then one sees from the explicit construction of $\widetilde{\mathcal{I}}_{m,\chi}(s)$ that $\kappa_{i,j}\notin (\beta-\mathcal{D}_{\chi}(\boldsymbol{n\epsilon}))\cup(\beta-\mathcal{D}_{\chi}(\boldsymbol{n\epsilon})).$ While $s\in \mathcal{R}_{\chi}(\beta)^-\cup \mathcal{R}_{\chi}(\beta)^+,$ $\Re(\kappa_{i,j})=0.$ In all, one has $\min\{|s\pm\kappa_{i,j}|,|s\pm\kappa_{i,j}-1|\}\gg_{\tau} C(\sigma_i\times\sigma_j;s_0)^{-N},$ and $\min\{|c(s,\kappa_l)|,|c(s,\kappa_l)-1|\}\gg_{\tau}C(\sigma_l\times\sigma_l')^{-N}$ (see \eqref{192.} for the notation here), where $N$ is a positive absolute constant coming from definition of the zero-free region (see \eqref{A} and \eqref{B}). Now apply preconvex bound to see that $|\Lambda(s\pm\kappa_{i,j},\sigma_i\otimes\tau\times\widetilde{\sigma}_{j+1})|\ll \Lambda(s'\pm\kappa_{i,j},\sigma_i\otimes\tau\times\widetilde{\sigma}_{j+1})$ and $|(s-1/2)(s-1)^n\Lambda(s,\sigma_k\otimes\tau\times\widetilde{\sigma}_k)|\leq |\Lambda(s',\sigma_k\otimes\tau\times\widetilde{\sigma}_k)|.$ One then concludes Claim \ref{80.} for $m=0$ form the proof of Theorem G in \cite{Yang19}. Likewise, one has bounds for $|\Lambda(c(s,\kappa_l),\sigma_l\otimes\tau\times\widetilde{\sigma}'_l)|\ll |\Lambda(c(s',\kappa_l),\sigma_l\otimes\tau\times\widetilde{\sigma}'_l)|$ and  $|(s-1/2)(s-2/3)(s-3/4)(s-1)^n\Lambda(c_{l'}(s),\sigma_{l'}\otimes\tau\times\widetilde{\sigma}'_{l'})|\ll|\Lambda(c_{l'}(s'),\sigma_{l'}\otimes\tau\times\widetilde{\sigma}'_{l'})|.$ Then the $m\geq 1$ part of Claim \ref{80.} follows from the proof of Theorem \ref{58''} and the fact that $(s-1/2)(s-1)^n\widetilde{\mathcal{I}}_{m,\chi}(s)\cdot{\Lambda(s,\tau)}^{-1}$ is holomorphic at $s\in \{2/3,\cdots,(n-1)/n\}.$
\end{proof}

\section{Proof of Main Theorems}\label{8.}
\begin{prop}\label{76lem}
Let $n\geq 1$ be an integer. Let $\pi$ be an cuspidal representation of $\GL(n,\mathbb{A}_F)$ and $\tau$ be a Hecke character on $F^{\times}\backslash\mathbb{A}_F^{\times},$ where $F$ is a number field. Assume $\tau$ has order at most 2. Then the root number of $\Lambda_F(s,\pi,Ad\otimes\tau)$ is 1.
\end{prop}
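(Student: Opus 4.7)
The plan is to reduce, via local Langlands and Artin formalism, the global root number of $\Lambda_F(s,\pi,\Ad\otimes\tau)$ to the root number of a one-dimensional object, and then evaluate that directly. At each place $v$, the local Langlands correspondence for $\GL_n$ (Harris--Taylor/Henniart at finite places, Langlands at archimedean ones) associates to $\pi_v$ a Weil--Deligne parameter $\rho_v\colon W'_{F_v}\to \GL_n(\mathbb{C})$. The Artin decomposition $\rho_v\otimes\tau_v\otimes\rho_v^{\vee}\simeq (\Ad\rho_v\otimes\tau_v)\oplus \tau_v$, together with multiplicativity of $\epsilon$-factors in short exact sequences, gives
$$\epsilon(s,\pi_v\otimes\tau_v\times\widetilde{\pi}_v,\psi_v)=\epsilon(s,\Ad\rho_v\otimes\tau_v,\psi_v)\cdot \epsilon(s,\tau_v,\psi_v).$$
Taking the product over $v$ and cancelling against the $\Lambda(s,\tau)$ appearing in the denominator of $\Lambda(s,\pi,\Ad\otimes\tau)$, the global root number becomes $W(\pi,\Ad\otimes\tau)=\prod_v \epsilon(1/2,\Ad\rho_v\otimes\tau_v,\psi_v)$. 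It therefore suffices to show that the global $\epsilon$-factor $\epsilon(1/2,\Ad\rho\otimes\tau)$ equals $+1$.

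Next I would exploit the orthogonality of the adjoint. The map $\Ad\colon\GL_n(\mathbb{C})\to\GL(\mathfrak{sl}_n(\mathbb{C}))$ preserves the Killing form $(X,Y)\mapsto\mathrm{Tr}(XY)$ and so factors through $\mathrm{SO}(n^2-1,\mathbb{C})$; in particular each $\Ad\rho_v$ is orthogonal self-dual with trivial determinant. Since $\tau^2=1$, the twisted parameter $\Ad\rho\otimes\tau$ is still orthogonal self-dual, now with $\det(\Ad\rho\otimes\tau)=\tau^{n^2-1}$. By a theorem of Deligne generalizing Fr\"ohlich--Queyrut, for an orthogonal self-dual Weil--Deligne representation $\sigma$ of the Weil group of a number field the global root number satisfies $W(\sigma)=W(\det\sigma)$: locally $\epsilon(1/2,\sigma_v,\psi_v)=\epsilon(1/2,\det\sigma_v,\psi_v)\cdot w_2(\sigma_v)$ with $w_2(\sigma_v)\in\{\pm 1\}$ a Stiefel--Whitney class, and the global product $\prod_v w_2(\sigma_v)=1$ by reciprocity in $\mathrm{Br}(F)[2]$. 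This yields
$$W(\pi,\Ad\otimes\tau)=W(\tau^{n^2-1}).$$

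It remains to evaluate $W(\tau^{n^2-1})$. When $n$ is odd, $n^2-1$ is even and $\tau^{n^2-1}=\mathbf 1$; the completed Dedekind zeta $\Lambda_F(s)$ has a trivially-signed functional equation, so $W(\mathbf 1)=1$. When $n$ is even, $\tau^{n^2-1}=\tau$; writing $\Lambda(s,\tau)=\Lambda_K(s)/\Lambda_F(s)$ for the quadratic extension $K/F$ cut out by $\tau$ via class field theory, both Dedekind zetas have root number $1$, so $W(\tau)=1$. Either way $W(\pi,\Ad\otimes\tau)=1$, as required. The main obstacle is justifying Deligne's formula $W(\sigma)=W(\det\sigma)$ at the level of Weil--Deligne parameters rather than finite-image Artin representations: at non-archimedean places where $\pi_v$ is supercuspidal or Steinberg, the parameter $\Ad\rho_v$ carries nontrivial monodromy, and one must verify the local Stiefel--Whitney identity and the global Brauer reciprocity carefully in this generality; the input from global automorphy, namely that $\rho=(\rho_v)$ arises from an automorphic $\pi$ on $\GL_n(\mathbb{A}_F)$, is what allows the local invariants to assemble into a Brauer class for which reciprocity applies.
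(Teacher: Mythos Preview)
Your approach is conceptually attractive and genuinely different from the paper's, but the step you flag as ``the main obstacle'' is in fact a real gap that your explanation does not close. Deligne's local formula $\epsilon(1/2,\sigma_v,\psi_v)=\epsilon(1/2,\det\sigma_v,\psi_v)\cdot w_2(\sigma_v)$ and the global vanishing $\prod_v w_2(\sigma_v)=1$ are proved for a \emph{global} orthogonal representation of $G_F$ (or $W_F$): the $w_2(\sigma_v)$ are the local invariants of a single class in $H^2(G_F,\mathbb Z/2)\hookrightarrow\mathrm{Br}(F)[2]$, and it is Brauer reciprocity for \emph{that} class which forces the product to be $1$. In your situation the parameters $\rho_v$ are produced place by place via local Langlands, and no global Galois representation is available for cuspidal $\pi$ on $\GL_n$ over a general number field; automorphy of $\pi$ alone does not package the local $w_2(\Ad\rho_v\otimes\tau_v)$ into a global Brauer class. (One can salvage this for $n$ odd by observing that $\Ad$ then lifts through $\mathrm{Spin}(n^2-1)$, so each local $w_2$ vanishes; for $n$ even this lift is obstructed and the global argument genuinely breaks down.) A second, independent issue is that Deligne's local identity is stated for finite-image representations, whereas at places where $\pi_v$ is special or supercuspidal the parameter $\Ad\rho_v$ carries nontrivial monodromy; extending the Stiefel--Whitney formalism to such Weil--Deligne representations requires additional work you do not supply.

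The paper circumvents both difficulties by computing the local root numbers outright and using a different global principle. At finite $v$ it invokes Bushnell--Henniart, who prove $W(\pi_v,\Ad)=\omega_{\pi_v}(-1)^{n-1}$ for \emph{every} irreducible admissible $\pi_v$ (supercuspidals included); at archimedean $v$ it verifies the same formula by hand via the Langlands classification, treating $F_v\simeq\mathbb R$ and $F_v\simeq\mathbb C$ separately. The global input is then simply that $\omega_\pi=\otimes_v\omega_{\pi_v}$ is an idele class character, so $\prod_v\omega_{\pi_v}(-1)=1$, giving $W(\pi,\Ad)=1$. For nontrivial quadratic $\tau$ the paper does not argue via orthogonality at all: it uses inductivity of $\epsilon$-factors for the quadratic extension $K/F$ cut out by $\tau$ to write $W(\pi,\Ad\otimes\tau)=W(\tau)^{n^2-1}W(\pi^*,\Ad)W(\pi,\Ad)^{-1}$ with $\pi^*$ the base change to $K$, and then applies the $\tau=1$ case twice (over $F$ and over $K$). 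Your determinant computation $W(\tau^{n^2-1})=1$ and the paper's endgame agree, but the paper reaches it by a route that never needs a global Galois-theoretic object.
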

\begin{proof}
Denote by $W(\pi,\Ad\otimes\tau)=\prod_{v\in\Sigma_F}W(\pi_v,\Ad\otimes\tau_v)$ the root number associated with $\Lambda_F(s,\pi,\Ad\otimes\tau),$ where $W(\pi_v,\Ad\otimes\tau_v)$ are local root number in the functional equation of $L(s,\pi_v,\Ad\otimes\tau_v).$ First we deal with the case where $\tau$ is trivial. The general case will be reduced to this special case by base change. Write $W(\pi,\Ad)=\prod_{v\in\Sigma_F}W(\pi_v,\Ad).$ According to \cite{BH99}, for any $v\in\Sigma_{F,fin}$ and any irreducible admissible representation $\pi_v$ of $\GL(n,F_v),$ one has that $W(\pi_v,\Ad)=w_{\pi_v}(-1)^{n-1},$ where $w_{\pi_v}$ is the central character of $\pi_v.$ 

Hence, we need to compute archimedean root numbers $W(\pi_v,\Ad).$ Since our approach is using Langlands classification (see \cite{Kna94}), we will separate the cases where the place $v$ is archimedean or finite.  
\begin{itemize}
\item[Case 1:] Assume that $F_v\simeq \mathbb{C}.$ One has $W_{F_v}\simeq\mathbb{C}^{\times}.$ So all irreducible representations are one dimensional. We may write any such characters as $\chi_{k,\nu}(z)=(z/|z|)^k|z|_{\mathbb{C}}^{\nu}=(z/|z|)^k|z|^{2\nu},$ for $k\in \mathbb{Z}$ and $\nu\in\mathbb{C}.$ The root number associated to this character is $W(\chi_{k,\nu})=i^{|k|}.$ Since $\chi_{k,\nu}\otimes \chi_{k',\nu'}=\chi_{k+k',\nu+\nu'},$ we then have $W(\chi_{k+k',\nu+\nu'})=i^{|k+k'|}.$ Let $\oplus_{j=1}^n\chi_{k_j,\nu_j}$ be the representation corresponding to $\pi_v.$ Then $\Ad\pi_v$ corresponds to 
\begin{align*}
\Ad(\oplus_{j=1}^n\chi_{k_j,\nu_j})=(n-1)\boldsymbol{1}\bigoplus\oplus_{l=1}^{n-1}\oplus_{j=l+1}^n \chi_{k_l,\nu_l}\otimes{\chi}_{k_j,\nu_j}^{-1}\oplus \chi_{k_j,\nu_j}\otimes{\chi}_{k_l,\nu_l}^{-1},
\end{align*}
where $\boldsymbol{1}$ is the trivial representation of $W_{F_v}.$ Therefore, we have 
\begin{align*}
W(\pi_v,\Ad)&=W(\boldsymbol{1})^{n-1}\prod_{l=1}^{n-1}\prod_{j=l+1}^nW(\chi_{k_l-k_j,\nu_l-\nu_j})W(\chi_{k_j-k_l,\nu_j-\nu_l})\\
&=\prod_{l=1}^{n-1}\prod_{j=l+1}^n(-1)^{|k_l-k_j|}=\prod_{l=1}^{n-1}\prod_{j=l+1}^n(-1)^{k_l+k_j}.
\end{align*}
By comparing multiplicity of each $k_j$ one concludes that $\sum_{1\leq l<j\leq n}(k_l-k_j)\cong (n-1)\sum_{j=1}^nk_j\mod2.$ Consequently, we have
\begin{equation}\label{299}
W(\pi_v,\Ad)=\prod_{j=1}^n(-1)^{(n-1)k_j}=w_{\pi_v}(-1)^{n-1}.
\end{equation}
\item[Case 2:] Assume that $F_v\simeq \mathbb{R}.$ One has $W_{F_v}\simeq\mathbb{C}^{\times}\sqcup\boldsymbol{j}\mathbb{C}^{\times},$ where $\boldsymbol{j}^2=-1$ and $\boldsymbol{j}z\boldsymbol{j}^{-1}=\bar{z}$ for any $z\in\mathbb{C}^{\times}.$ Hence each irreducible representation $\sigma$ of $W_{F_v}$ is of dimension 1 or 2. If $\dim\sigma=1,$ then its restriction to $\mathbb{C}^{\times}$ is of the form $\chi_{0,\nu}$ for some $\nu\in\mathbb{C}$ (see (3.2) of \cite{Kna94}). If $\sigma(\boldsymbol{j})=1,$ then $W(\sigma),$ the root number associated to $\sigma,$ is trivial. If $\sigma(\boldsymbol{j})=-1,$ then $W(\sigma)=i.$ If $\dim\sigma=2,$ then it is the induction of $\chi_{k,\nu}$ from $\mathbb{C}^{\times}$ to $\GL(2,\mathbb{R}),$ where $k\in\mathbb{N}_{\geq 1}$ and $\nu\in\mathbb{C}.$ In this case, the root number $W(\sigma)=i^k.$ Let $\sigma_1$ and $\sigma_2$ be two irreducible representations of $W_{F_v}.$ We shall examine the tensor product parameters $\sigma_1\otimes\widetilde{\sigma}_2.$
\begin{itemize}
\item[(a)] If $\dim\sigma_1=\dim\sigma_2=1,$ then so is $\sigma_1\otimes\widetilde{\sigma}_2.$ Let $\sigma_1=\chi_{0,\nu_1}$ and $\sigma_2=\chi_{0,\nu_2}.$ Then $\sigma_1\otimes\widetilde{\sigma}_1=\chi_{1-\sigma_1(\boldsymbol{j})\sigma_1(\boldsymbol{j}),\nu_1+\nu_1}=\chi_{0,2\nu_1}.$ Thus one has the formula  $W(\sigma_1\otimes\widetilde{\sigma}_1)=1,$ $W(\sigma_1\otimes\widetilde{\sigma}_2)=i^{{1-\sigma_1(\boldsymbol{j})\sigma_2(\boldsymbol{j})}},$ and 
\begin{equation}\label{300.}
W(\sigma_1\otimes\widetilde{\sigma}_2)W(\sigma_2\otimes\widetilde{\sigma}_1)=i^{{1-\sigma_1(\boldsymbol{j})\sigma_2(\boldsymbol{j})}}i^{{1-\sigma_2(\boldsymbol{j})\sigma_1(\boldsymbol{j})}}=(-1)^{{1-\sigma_1(\boldsymbol{j})\sigma_2(\boldsymbol{j})}}=1.
\end{equation}
\item[(b)] If $\dim\sigma_1\cdot \dim\sigma_2=2,$ then $\sigma_1\otimes\widetilde{\sigma}_2$ is irreducible and two dimensional. Let $\sigma_1=\chi_{0,\nu_1}$ and $\sigma_2$ be induced from $\mathbb{C}^{\times}$ by $\chi_{k_2,\nu_2},$ where $k_2\in\mathbb{N}.$ Then $\sigma_1\otimes\widetilde{\sigma}_2$ is induced from $\mathbb{C}^{\times}$ by $\chi_{k_2,\nu_1+\nu_2}.$ Thus $W(\sigma_1\otimes\widetilde{\sigma}_2)=i^{k_2}$ and 
\begin{equation}\label{301}
W(\sigma_1\otimes\widetilde{\sigma}_2)W(\sigma_2\otimes\widetilde{\sigma}_1)=i^{k_2}i^{k_2}=(-1)^{k_2}.
\end{equation}
\item[(c)] If $\dim\sigma_1=\dim\sigma_2=2,$ then we may assume that $\sigma_1$ is induced from $\mathbb{C}^{\times}$ by $\chi_{k_1,\nu_1}$ and $\sigma_2$ is induced from $\mathbb{C}^{\times}$ by $\chi_{k_2,\nu_2}.$ Then $\sigma_1\otimes\widetilde{\sigma}_2$ is the direct sum of two two-dimensional representations, induced from $\mathbb{C}^{\times}$ from the characters $\chi_{k_1,\nu_1}\chi_{-k_2,-\nu_2}=\chi_{k_1-k_2,\nu_1-\mu_2}$ and $\chi_{-k_1,-\nu_1}\chi_{-k_2,-\nu_2}=\chi_{-k_1-k_2,-\nu_1-\mu_2}.$ Note that the former representation id reducible when $k_1=k_2.$ It then follows that $W(\sigma_1\otimes\widetilde{\sigma}_1)=i^{2|k_1|}=(-1)^{k_1},$ and 
\begin{equation}\label{302}
W(\sigma_1\otimes\widetilde{\sigma}_2)W(\sigma_2\otimes\widetilde{\sigma}_1)=i^{2|k_1-k_2|+2|k_1+k_2|}=1.
\end{equation}
\end{itemize}
Let $\oplus_{j=1}^r\oplus_{j=1}^r\sigma_j$ be the representation corresponding to $\pi_v,$ where $\dim\sigma\in\{1,2\}$ and $\sum_{j=1}^r\dim\sigma_j=n.$ Assume further that $\dim\sigma_k=1,$ for $1\leq k\leq r_0\leq r;$ and $\dim\sigma_k=2,$ for $r_0<k\leq r.$ For $1\leq k\leq r_0,$ write $\sigma_k=\chi_{w_k,\nu_k};$ for $r_0<k\leq r,$ we may assume $\sigma_k$ is induced from $\mathbb{C}^{\times}$ by $\chi_{w_k,\nu_k},$ where $w_k\geq 0.$ Then $\Ad\pi_v\boxplus\boldsymbol{1}$ corresponds to 
\begin{align*}
\qquad\Ad(\oplus_{j=1}^r\sigma_j)\oplus \boldsymbol{1}=\oplus_k^{r}\sigma_k\otimes\widetilde{\sigma}_k\bigoplus\oplus_{l=1}^{r-1}\oplus_{j=l+1}^r \sigma_l\otimes\widetilde{\sigma}_j\oplus \sigma_j\otimes\widetilde{\sigma}_l,
\end{align*}
where $\boldsymbol{1}$ is the trivial representation of $W_{F_v}.$ Therefore, we have 
\begin{align*}
W(\pi_v,\Ad)&=\prod_{k=1}^rW(\sigma_k\otimes\widetilde{\sigma}_k)\prod_{l=1}^{r-1}\prod_{j=l+1}^rW(\sigma_l\otimes\widetilde{\sigma}_j)W(\sigma_j\otimes\widetilde{\sigma}_l)\\
&=\prod_{k=r_0+1}^r(-1)^{w_k}\prod_{l=1}^{r_0}\prod_{j=r_0+1}^{r}(-1)^{w_j}=(-1)^{(r_0+1)\sum_{k=r_0+1}^rw_k}.
\end{align*}
Now applying the relation $r_0+2(r-r_0)=n$ one deduces easily that $(-1)^{(r_0+1)\sum_{k=r_0+1}^rw_k}=(-1)^{(n-1)\sum_{k=r_0+1}^rw_k}=w_{\pi_v}(-1)^{n-1}.$ So we have
\begin{equation}\label{300}
W(\pi_v,\Ad)=w_{\pi_v}(-1)^{n-1},\ \text{$\forall$ $v\mid\infty$ such that $F_v\simeq \mathbb{R}.$}
\end{equation}
\end{itemize}  
Then combining \eqref{299}, \eqref{300} with results from \cite{BH99} we conclude that 
\begin{equation}\label{0}
W(\pi,\Ad)=1,\ \forall\ F,\ \forall\ \pi\in\mathcal{A}_0(\GL(n,\mathbb{A}_F)).
\end{equation}

Let $v$ be a place of $F$. Let $\sigma_v$ be an $n$-dimensional representation of $W_{F_v}\times \GL(2,\mathbb{C})$ (resp. $W_{F_v}$) for $v$ nonarchimedean (resp. archimedean) associated to $\pi_v$ via local Langlands correspondence (see \cite{Hen00} and \cite{HT01}). Let $\Ad\sigma_v$ be the adjoint representation of $\sigma_v.$ Then $\dim \Ad\sigma_v=n^2-1.$

Now assume $\tau$ is nontrivial. If $\pi\otimes\tau\simeq\pi,$ then from previous result, we have
\begin{align*}
W(\pi,\Ad\otimes\tau)=\frac{W(\pi\times\widetilde{\pi})\otimes\tau}{W(\tau)}=\frac{W(\pi\times\widetilde{\pi})}{W(\tau)}=1,
\end{align*}
as $W(\tau)=1.$ Since $\tau$ is quadratic, then there exists some quadratic extension $K/F$ such that $\tau$ is the character associated to this extension. Let $\pi^*$ be the base change of $\pi$ with respect to $K/F.$ By proceeding analysis we may assume that $\pi\otimes\tau\ncong \pi.$ Then $\pi^*$ is cuspidal. Let $\theta=\otimes_v\theta_v$ be a nontrivial additive character on $F\backslash\mathbb{A}_F.$ Write $\tau=\otimes_v\tau_v.$ Let $v\in\Sigma_F$ and $\mathfrak{p}=\mathfrak{p}_v$ be a place of $K$ above $v,$ then $K_{\mathfrak{p}}$ is a quadratic extension of $F_v$ and $\tau_v$ is the character associated to this extension. Let $\sigma_v^*=\Res_{K_\mathfrak{p}/F_v}\sigma_v.$ Then one has (see \cite{Tat79}) that 
\begin{align*}
\epsilon(\Ind_{W_{K_\mathfrak{p}}}^{W_{F_v}}(\Ad\sigma_v^*\ominus(n^2-1)\boldsymbol{1}_{K_\mathfrak{p}}),\theta_{F_v})=\epsilon(\Ad\sigma_v^*\ominus(n^2-1)\boldsymbol{1}_{K_\mathfrak{p}},\theta_{F_v}\circ\Tr_{K_\mathfrak{p}/F_v}).
\end{align*}
Hence $\epsilon(\Ad\sigma_v,\theta_{F_v})\epsilon(\Ad\sigma_v\otimes\tau_v,\theta_{F_v})=\epsilon(\tau_v,\theta_{F_v})^{n^2-1}\epsilon(\Ad\sigma_v^*,\theta_{F_v}\circ\Tr_{K_\mathfrak{p}/F_v}),$ implying that $\epsilon(\Ad\sigma_v\otimes\tau_v,\theta_{F_v})=\epsilon(\tau_v,\theta_{F_v})^{n^2-1}\epsilon(\Ad\sigma_v^*,\theta_{F_v}\circ\Tr_{K_v/F_v})\epsilon(\Ad\sigma_v,\theta_{F_v})^{-1}.$ Therefore, via local Langlands correspondence we have 
\begin{align*}
\epsilon(\Ad\pi_v\otimes\tau_v,\theta_{F_v})=\epsilon(\tau_v,\theta_{F_v})^{n^2-1}\epsilon(\Ad\pi_v^*,\theta_{F_v}\circ\Tr_{K_\mathfrak{p}/F_v})\epsilon(\Ad\pi_v,\theta_{F_v})^{-1}.
\end{align*}
Then we have GLobally that  $W(\pi,\Ad\otimes\tau)=W(\tau)^{n^2-1}W(\pi^*,\Ad)W(\pi,\Ad).$ Since $\tau$ is quadratic, it is of orthogonal type. Thus by \cite{Del76}, $W(\tau)=1.$ Therefore we have $W(\pi,\Ad\otimes\tau)=W(\pi^*,\Ad)W(\pi,\Ad).$ Then Proposition \ref{76lem} will follow from \eqref{0}.
\end{proof}

\begin{proof}[Proof of Theorem \ref{main1}]
Recall that we have shown, for any test function $\varphi\in\mathcal{F}(w),$ 
\begin{align*}
I_0(s,\tau)=\int_{G(F)Z_G(\mathbb{A}_F)\setminus G(\mathbb{A}_F)}\K_0(x,x)E(x,\Phi,\tau;s)dx=I_{\Geo,\Reg}(s,\tau)+I_{\infty}(s,\tau),
\end{align*}	
where $I_{\infty}(s,\tau)=I_{\infty,\Reg}(s,\tau)+I_{\infty}^{(1)}(s,\tau)+I_{\Sin}(s,\tau).$

Since $n\leq 4,$ then according to Uchida-Van der Waal Theorem (see \cite{Uch75} and \cite{Waa75}) and its generalization to twist form (see \cite{MR00}), each $\Lambda_E\left(s,\tau\circ N_{E/F}\right)\cdot \Lambda_F(s,\tau)^{-1}$ admits a holomorphic continuation to the whole complex plane. It then follows from Theorem D in \cite{Yang19} that the function $I_{\Geo,\Reg}(s,\tau)/\Lambda_F(s,\tau)$ admits an entire continuation. 

Also, by Theorem \ref{Y}, Theorem \ref{78} and Theorem E in loc. cit., the function $I_{\infty}(s,\tau)/ \Lambda_F(s,\tau)$ admits a meromorphic continuation to $\mathcal{R}(1/2;\tau)^-\cup\mathcal{S}_{(1/2,\infty)},$ with possible simple poles at $s\in\{1/2,2/3,3/4\}.$ Moreover, if $L_F(2/3,\tau)=0,$ then $I_{\infty}(s,\tau)\cdot\Lambda_F(s,\tau)^{-1}$ is regular at $s=2/3;$  if $L_F(3/4,\tau)=0,$ then $I_{\infty}(s,\tau)\cdot\Lambda_F(s,\tau)^{-1}$ is regular at $s=3/4.$
\medskip 

Let $\rho\in\mathcal{R}(1/2;\tau)^-\cup\mathcal{S}_{(1/2,1)}$ be a zero of $\Lambda(s,\tau)$ of order $r_{\rho}\geq1.$ Denote by 
\begin{align*}
J(\rho;j)=\int_{G(F)Z(\mathbb{A}_F)\backslash  G(\mathbb{A}_F)}\K_0(x,x)\frac{\partial^{j}}{\partial s^{j}}E(x,\Phi,\tau;s)\mid_{s=\rho}dx,\ 0\leq j\leq r_{\rho}-1.
\end{align*}
If $\rho\neq1/2,$ we then see that $J(\rho;j)=0$ for any $0\leq j\leq r_{\rho}-1$ and $\varphi\in\mathcal{F}(w)$. According to the Proposition in Section 3.3 of \cite{JZ87}, one has, for all cuspidal representations $\pi\in \mathcal{A}_0\left(G(F)\setminus G(\mathbb{A}_F),w^{-1}\right),$ and all $K$-finite functions $\phi_1, \phi_2\in\V_{\pi},$ that 
\begin{align*}
\int_{G(F)Z(\mathbb{A}_F)\backslash  G(\mathbb{A}_F)}\phi_1(x)\overline{\phi_2(x)}\frac{\partial^{j}}{\partial s^{j}}E(x,\Phi,\tau;s)\mid_{s=\rho}dx=0.
\end{align*}
Then by Rankin-Selberg theory, we have, for all cuspidal representations $\pi\in \mathcal{A}_0\left(G(F)\setminus G(\mathbb{A}_F),w^{-1}\right),$ that $\frac{\partial^{j}}{\partial s^{j}}\Lambda(s,\pi\otimes\tau\times\widetilde{\pi})\mid_{s=\rho}=0,$ $1\leq j<r_{\rho},$ implying that the adjoint $L$-function  $\Lambda(s,\pi,\Ad\otimes\tau)$ is regular at $s=\rho.$
\medskip 

Now assume that $\rho=1/2,$ namely, $L_F(1/2,\tau)=0.$ If $\tau$ is not quadratic, then by Theorem \ref{78}, $I_{\infty}^{(1)}(s,\tau)\cdot \Lambda_F(s,\tau)^{-1}$ is regular at $s=1/2.$ Therefore, we have $J(1/2;j)=0,$ for $1\leq j\leq r_{1/2}-1.$ Hence, by similar analysis as above we see that $\frac{\partial^{j}}{\partial s^{j}}\Lambda(s,\pi\otimes\tau\times\widetilde{\pi})\mid_{s=1/2}=0,$ $1\leq j\leq r_{1/2}-1,$ implying that the adjoint $L$-function $\Lambda(s,\pi,\Ad)$ is regular at $s=1/2.$ Now we assume that $\tau^2=1.$ If $r_{1/2}\geq 2,$ then by Theorem \ref{Y}, Theorem \ref{78} and Theorem A in \cite{Yang19}, we see that $J(1/2;j)=0,$ for $1\leq j\leq r_{1/2}-2.$ Hence, by the Proposition in Section 3.3 of \cite{JZ87} we see that $\frac{\partial^{j}}{\partial s^{j}}\Lambda(s,\pi\otimes\tau\times\widetilde{\pi})\mid_{s=1/2}=0,$ $1\leq j<r_{1/2}-1,$ implying that the adjoint $L$-function $\Lambda(s,\pi,\Ad)$ has at most a simple pole at $s=1/2.$ Now we apply Proposition \ref{76lem} to exclude this possible simple pole at $1/2.$ Suppose that $\Lambda(s,\pi,\Ad\otimes\tau)$ has a pole at $s=1/2.$ Since the root number of $\Lambda(s,\pi,\Ad\otimes\tau)$ is trivial, then the order of the pole $s=1/2$ must be even. So $\Lambda(s,\pi,\Ad\otimes\tau)$ cannot have a simple pole at $s=1/2.$ A contradiction. If $r_{1/2}=1,$ then clearly, the adjoint $L$-function  $\Lambda(s,\pi,\Ad)$ has at most a simple pole at $s=1/2.$ The same argument on root number excludes the possibility of pole at $s=1/2.$ 

In all, we have shown that $\Lambda(s,\pi,\Ad\otimes\tau)$ is holomorphic in $\mathcal{R}(1/2;\tau)^-\cup\mathcal{S}_{(1/2,\infty)}.$ Now Theorem \ref{main1} follows from GLobal functional equation of $\Lambda(s,\pi,\Ad\otimes\tau).$
\end{proof}

\begin{proof}[Proof of Corollary \ref{2cor}]
It follows from local Langlands correspondence that the local factor $L_{\infty}(s,\pi_{\infty}\otimes\tau_{\infty}\times\widetilde{\pi}_{\infty})\cdot L_{\infty}(s,\tau_{\infty})^{-1}$ is equal to a finite product of exponential functions and Gamma functions. Therefore,  $L_{\infty}(s,\tau_{\infty})\cdot L_{\infty}(s,\pi_{\infty}\otimes\tau_{\infty}\times\widetilde{\pi}_{\infty})^{-1}$ admits a holomorphic continuation to the whole complex plane. Therefore, by Theorem \ref{main1},  we conclude that 
\begin{align*}
L(s,\pi,Ad\otimes\tau)=\Lambda(s,\pi,\Ad\otimes\tau)\cdot\frac{L_{\infty}(s,\tau_{\infty})}{L_{\infty}(s,\pi_{\infty}\otimes\tau_{\infty}\times\widetilde{\pi}_{\infty})}
\end{align*}	
admits a holomorphic continuation to the whole complex plane. 
\end{proof}

\bibliographystyle{alpha}

\bibliography{Ad}
\end{document}